\newtheorem{prop}{Proposition}[subsection]
\newtheorem{theo}[prop]{Theorem}
\newtheorem{lemma}[prop]{Lemma}
\newtheorem{coro}[prop]{Corollary}
\theoremstyle{definition}
\newtheorem{definition}[prop]{Definition}
\newtheorem{remark}[prop]{Remark}
\newtheorem{example}[prop]{Example}
\newtheorem{conjecture}[prop]{Conjecture}
\renewcommand\@biblabel[1]{#1}
\newcommand\DynkinNodeSize{2mm}
\newcommand\DynkinArrowLength{3mm}
\tikzset{
  dnode/.style={
    circle,
    inner sep=0pt,
    minimum size=\DynkinNodeSize,
    fill=white,
    draw},
  middlearrow/.style={
    decoration={markings,
      mark=at position 0.6 with
      {\draw (0:0mm) -- +(+135:\DynkinArrowLength); \draw (0:0mm) -- +(-135:\DynkinArrowLength);},
    },
    postaction={decorate}
  },
  leftrightarrow/.style={
    decoration={markings,
      mark=at position 0.999 with
      {
      \draw (0:0mm) -- +(+135:\DynkinArrowLength); \draw (0:0mm) -- +(-135:\DynkinArrowLength);
      },
      mark=at position 0.001 with
      {
      \draw (0:0mm) -- +(+45:\DynkinArrowLength); \draw (0:0mm) -- +(-45:\DynkinArrowLength);
      },
    },
    postaction={decorate}
  },
  sedge/.style={
  },
  dedge/.style={
    middlearrow,
    double distance=0.5mm,
  },
  tedge/.style={
    middlearrow,
    double distance=1.0mm+\pgflinewidth,
    postaction={draw}, 
  },
  infedge/.style={
    leftrightarrow,
    double distance=0.5mm,
  }
}
\newcommand\revddots{\mathinner{\mkern1mu\raise\p@\vbox{\kern7\p@\hbox{.}}\mkern2mu\raise4\p@\hbox{.}\mkern2mu\raise7\p@\hbox{.}\mkern1mu}}
\def\revddots{\mathinner{\mkern1mu\raise\p@\vbox{\kern7\p@\hbox{.}}\mkern2mu\raise4\p@\hbox{.}\mkern2mu\raise7\p@\hbox{.}\mkern1mu}}
\newcommand\bmat{\begin{pmatrix}}
\newcommand\emat{\end{pmatrix}}
\newcommand\longto{{\, \longrightarrow \, }}
\newcommand\CC{\mathbb{C}}
\newcommand\NN{\mathbb{N}}\newcommand\ZZ{\mathbb{Z}}
\newcommand\inv{{^{-1}}}
    \newcommand\Bc{{\mathcal B}}
\newcommand\base{{\mathcal B}}
\newcommand\diag{{\operatorname{diag}}}
\newcommand\Tr{{\operatorname{Tr}}}
\newcommand\hy{{\widehat y}}\newcommand\halpha{{\widehat \alpha}}\newcommand\hv{{\widehat v}}
\newcommand\hbeta{{\widehat \beta}}
\newcommand\hPhi{{\widehat\Phi}}
\newcommand\hw{{\widehat w}}\newcommand\hW{{\widehat W}}
\newcommand\hG{{\widehat G}}
\newcommand\hB{{\widehat B}}
\newcommand\hT{{\widehat
    T}}
    \newcommand\hU{{\widehat U}} 
    \newcommand\hDel{{\widehat \Delta}} 
    \newcommand{\hg}{\widehat g}
\newcommand{\ha}{{\widehat \alpha}}
\newcommand{\hb}{\widehat \beta}
\newcommand{\wh}{\widehat}
\newcommand{\Bgg}{{\mathbf{B}}}
\newcommand{\ngg}{{\mathbf{n}}}
\newcommand{\pgg}{{\mathbf{p}}}
\newcommand\rddots
\newcommand\SL{\operatorname{SL}}\newcommand\Sp{\operatorname{Sp}}
\newcommand\Spin{\operatorname{Spin}}
\newcommand\Orb{{\mathcal O}}
\renewcommand\sl{{\mathfrak{sl}}}
\renewcommand{\lg}{{\mathfrak g}}\newcommand{\hlg}{{\widehat{\mathfrak g}}}
\newcommand\rd{{\operatorname{rd}}}
\newcommand{\lif}{\upharpoonleft \kern-0.35em}
\newcommand{\spc}{\kern0.2em}
\newcommand{\ii}{\mathbf{i}}
\newcommand{\jj}{\mathbf{j}}
\newcommand{\sba}{\overline{s}_\alpha}
\newcommand{\pia}{\varpi_\alpha}
\newcommand{\pib}{\varpi_\beta}
\newcommand{\piha}{\varpi_\halpha}
\newcommand{\sphweight}{\Lambda^+_{G/\hG}}
\newcommand{\delamw}{\Delta^\lambda_w}
\title{Generalised spherical minors and their relations}
\author{Luca Francone}
\begin{document}

\maketitle

\begin{abstract}
    Let $\hG$ be a spherical subgroup of minimal rank of the semisimple simply connected complex algebraic group $G$. 
We define some functions on the homogeneous space $G/\hG$ that we call \textit{generalised spherical minors}. When $G= \hG \times \hG$, we recover Fomin-Zelevinsky generalised minors.
We prove that generalised spherical minors satisfy some integer coefficients polynomial relations, that extend \cite{fomin1999double}[Theorem 1.17] and that have the shape of exchange relations of LP-algebras.
\end{abstract}

\tableofcontents

\section{Introduction }
Let $G$ be a  complex connected reductive algebraic group
  and $\hG$ be a connected reductive subgroup of $G$. Let $Y$ be the complete flag variety of $G.$
   We assume that the pair $(G,\hG)$ is \textit{spherical of minimal rank}. That is: there exists a point $y \in Y$ such that 
    \begin{enumerate}
        \item The $\hG$-orbit $\hG \cdot y $, of $y$, is open in $Y$;
        \item The stabiliser $\hG_y$ of $y$, in $\hG$, contains a maximal torus of $\hG$.
    \end{enumerate}
An important example is when $\hG$ is diagonally embedded in $\hG \times \hG$. More generally, the classification of spherical pairs of minimal rank has been done in \cite{spherangmin} and ultimately relies on the following theorem.

\begin{theo}
\label{theo:list sph rank min}
    Assuming  that $\hG$ is simple and $G$ is semisimple simply connected, the complete list of spherical pairs of minimal rank is:
\begin{enumerate}
\item $\hG=G$;
\item \label{list:tensor} $\hG$ is simple, simply connected and diagonally embedded in $G=\hG\times \hG$;
\item \label{list:slsp}
$(\SL_{2n},\Sp_{2n})$ with $n\geq 2$;
\item \label{list:spin}
$(\Spin_{2n},\Spin_{2n-1})$ with $n\geq 4$;

\item \label{list:G2}
$(\Spin_7,G_2)$;
\item \label{list:F4}
$(E_6,F_4)$.
\end{enumerate}
\end{theo}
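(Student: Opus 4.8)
The plan is to reduce to the case where $G$ is simple, and then, in that case, to sift the known list of reductive spherical subgroups by the ``minimal rank'' condition. The key to the reduction is a dimension inequality: choosing $y$ in the open $\hG$-orbit of $Y$ and using that $\hG_y$ contains a maximal torus of $\hG$,
\[
  |\Phi^+_G|=\dim Y=\dim\hG-\dim\hG_y\le\dim\hG-\operatorname{rk}\hG=2\,|\Phi^+_{\hG}|,
\]
where $\Phi^+$ denotes a choice of positive roots. Write $G=G_1\times\dots\times G_k$ with each $G_i$ simple and simply connected, and let $\pi_i\colon\hG\to G_i$ be the projections. If some $\pi_i$ had positive-dimensional kernel it would be trivial (as $\hG$ is simple), so $G/\hG$ would have $G_i$ as a direct factor, on which no Borel of $G$ has a dense orbit, contradicting sphericity; hence every $\ker\pi_i$ is finite, so $\dim G_i\ge\dim\hG$ and, by a short inspection of the simple types, $|\Phi^+_{G_i}|\ge|\Phi^+_{\hG}|$. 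Together with the displayed inequality this forces $k\le 2$. For $k=1$, $G$ is simple. For $k=2$ all inequalities are equalities, so $\hG$ is a full-dimensional subgroup of each $G_i$, hence equals it up to isogeny; since $G$ is simply connected so is each $G_i$ and therefore $\hG$, the $\pi_i$ are isomorphisms, and after an automorphism $\hG$ is the diagonal of $\hG\times\hG$. This is the group case~\ref{list:tensor}, and it also accounts for the simple-connectedness stated there.

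Suppose now $G$ is simple, so $\hG\subsetneq G$ is a proper simple reductive spherical subgroup, appearing in the classification of reductive spherical subgroups of simple algebraic groups (Kr\"amer, Brion, Mikityuk). I would run through the pairs with $\hG$ simple on that finite list and apply a two-stage filter. First, discard every pair already violating $|\Phi^+_G|\le 2|\Phi^+_{\hG}|$; this removes, for example, $(\SL_n,\SO_n)$, $(E_6,\Sp_8)$, $(E_7,A_7)$ and $(E_8,D_8)$. Second, for each surviving candidate, decide whether the generic isotropy $\hG_y$ --- a solvable group of dimension $\dim\hG-|\Phi^+_G|$ --- actually contains a maximal torus of $\hG$, equivalently whether $\operatorname{rk}(G/\hG)=\operatorname{rk}G-\operatorname{rk}\hG$. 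Using the explicit geometry of the spaces involved (affine quadrics/spheres for $\Spin_m/\Spin_{m-1}$ and $\Spin_7/G_2$; the variety of non-degenerate alternating forms for $\SL_{2n}/\Sp_{2n}$; the Cayley plane $F_4/\Spin_9$; the affine $6$-quadric $G_2/\SL_3$; the symmetric space $E_6/F_4$) one finds this equality fails for $(\Spin_{2n+1},\Spin_{2n})$, $(G_2,\SL_3)$, $(F_4,\Spin_9)$ and the other sporadic pairs, and holds for $(\SL_{2n},\Sp_{2n})$, $(\Spin_{2n},\Spin_{2n-1})$, $(\Spin_7,G_2)$, $(E_6,F_4)$ and $\hG=G$. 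Absorbing the small-rank coincidences ($\SL_2=\Sp_2$; $\Spin_4=\SL_2\times\SL_2$, $\Spin_6=\SL_4$, $\Spin_5=\Sp_4$ and $\Spin_8$-triality, which identify $(\Spin_{2n},\Spin_{2n-1})$ for $n\le 3$ with cases~\ref{list:tensor}--\ref{list:slsp} and the two embeddings $\Spin_7\subset\Spin_8$ with one another) leaves exactly the six items.

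The hardest part is the second stage of this last step: the borderline pairs --- those built from $G_2$, $F_4$, $\SL_3$ or $\Spin_9$ sitting inside exceptional or spin groups --- all pass the crude dimension inequality, so including or excluding them is genuinely a computation of the geometry of each space; and the whole argument uses the classification of reductive spherical subgroups as a black box. A more self-contained route, closer to \cite{spherangmin}, would be to use that minimal rank severely constrains the restricted root data of $(G,\hG)$ relative to a compatible pair of maximal tori $\hT\subset T\subset\hG\subset G$ --- forcing the restriction map $X^*(T)\to X^*(\hT)$ to send $\Phi_G$ onto $\Phi_{\hG}$ up to the zero weight --- and then to classify the resulting ``foldings'' of a simple root system directly (a doubled simple one giving the group case); there the main difficulty is the root-system bookkeeping and, once more, separating the genuinely new pairs from those that reduce to the group case.
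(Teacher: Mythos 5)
The paper does not prove this theorem; it imports it wholesale from \cite{spherangmin}, so there is no ``paper's own proof'' to compare against line by line. Judged on its own terms, your plan is a sensible and essentially correct \emph{strategy} for reproving the classification, but it is a sketch with two real gaps and it goes by a genuinely different route than the cited reference.

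Your route is: bound $k$ (the number of simple factors of $G$) by the inequality $|\Phi^+_G|\le 2|\Phi^+_{\hG}|$, handle $k=2$ as the group case, and for $k=1$ sift the Kr\"amer--Brion--Mikityuk list of reductive spherical subgroups by the rank criterion $\operatorname{rk}(G/\hG)=\operatorname{rk}G-\operatorname{rk}\hG$. Two remarks. First, the step in the $k=2$ case where ``all inequalities are equalities, so $\hG$ is a full-dimensional subgroup of each $G_i$'' is not quite airtight: equality in the chain you set up gives $|\Phi^+_{G_i}|=|\Phi^+_{\hG}|$, but $\dim G_i=\dim\hG$ additionally requires $\operatorname{rk}G_i=\operatorname{rk}\hG$, and you have only $\operatorname{rk}G_i\ge\operatorname{rk}\hG$ from the finite-kernel map. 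One can close this (e.g.\ using that $\hT$ is then a regular torus of $G_i$ whose centralizer is $T_i$, and that every root of $G_i$ restricts injectively to a root of $\hG$, forcing the ranks to match after a short argument on root systems), but as written the conclusion is asserted rather than proved. Second, the whole $k=1$ analysis is declared but not performed; you correctly identify which pairs pass and which fail, but the computation of $\operatorname{rk}(G/\hG)$ for each borderline candidate is precisely where the work lies, and you say so yourself.

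As for the comparison of approaches: \cite{spherangmin} does not go through the classification of reductive spherical subgroups at all. It proceeds along the lines of your closing paragraph, studying the restriction map $\rho:X(T)\to X(\hT)$ (which the present paper recalls in Section 2.1), showing it maps $\Phi$ onto $\hPhi$ with fibres of size $1$ or $2$, and then classifying the resulting ``foldings'' of a simple root system directly; the diagonal case appears as the trivial folding. That route is self-contained, whereas yours trades self-containment for the convenience of a big black-box input. Both are legitimate; yours is probably faster to sketch but harder to make fully rigorous without the cited classification, while the folding route is the one that actually produces the restricted-root combinatorics that the rest of the present paper relies on. Minor typo in your last paragraph: you write ``$\hT\subset T\subset\hG\subset G$'' where you surely mean $\hT\subset T$ and $\hT\subset\hB\subset\hG\subset G$ (or similar); $T$ is not contained in $\hG$.
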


\noindent Our present work on spherical pairs of minimal rank aims to generalise some results and ideas of \cite{fomin1999double} in the context of spherical pairs of minimal rank. In order to present our  results, we introduce some notation.

\bigskip

 Assume from now on that $G$ is semisimple and simply connected.  Let $B$ be a Borel subgroup of $G$, $T$ be a maximal torus of $B$ and  $W$ be the Weyl group of $G$ with respect to the torus $T$.

 \noindent We define the sets
$$
Z:= \{ z \in W \, : \, Bz\hG \quad \text{is open in} \, \, G \} \hspace{0.8 cm} Z_m:= \{ z \in Z \, : \, \ell(z) \, \, \text{is minimal} \},
$$
where $\ell$ denotes the length of the Coxeter group $W$, with respect to the set of simple roots determined by $B$. The sets $Z$ and $Z_m$ are non-empty. Let $z \in Z_m$ and set $y=z^{-1}.$

 We denote by $X(T)$ (resp. $X(T)^+$) the set of characters (resp. dominant characters) of $T$. For $\lambda \in X(T)^+$, the notation $V(\lambda)$ stands for the irreducible $G$-representation of highest weight $\lambda$.
 Let

\begin{equation}
\Lambda_{G/\hG}^+= \{ \lambda \in X(T)^+ \, : \, V(\lambda)^\hG \neq 0 \}
\end{equation}
be the monoid of \textit{spherical weights}. As a monoid,   $\Lambda_{G/\hG}^+$ is freely generated by a unique set of free generators, denoted by $\Bc$. For the pairs of Theorem \ref{theo:list sph rank min}, the set $\Bc$ is described in \eqref{eq:sph base of weights} below.

\bigskip

For any $w \in W$ and $\lambda \in \Lambda_{G/\hG}^+$, we define a function $\Delta^\lambda_w$ (Definition \ref{def:sph minors}) on the homogeneous spaces $G/\hG$. Such a function is called \textit{generalised spherical minor}. After proving some general properties of generalised spherical minors, we focus on algebraic relations between them. Before stating our main result, we make some observations.

Because of Lemma \ref{lem:irred generalised spherical minors}, any generalised spherical minor is actually a monomial in the minors of the form $\Delta^b_{w}$, with $b \in \Bc$ and $w \in W$. 
Thus, algebraic relations between minors can be expressed in terms of the functions $\Delta^b_w$, with $b \in \Bc$ and $w \in W$.

\noindent Moreover, the definition of the functions $\Delta^\lambda_w $ depends, in principle, on the choice of the element $z$. By Lemma \ref{lem:sph minors independent z}, the set of generalised minors only depends on $z$ up to some signs.
Thus, the choice of the element $z \in Z_m$ doesn't substantially impact the set of relations.

\noindent If $\hG$ is not simple, by \cite{spherangmin}[Proposition 3.2] there exists two spherical pairs of minimal rank $(G_1, \hG_1)$ and $ (G_2, \hG_2)$ such that 
$ (G, \hG)= (G_1 \times G_2, \, \hG_1 \times \hG_2).$
Moreover, Lemma \ref{lem:characterisation generalised spherical minors} implies that the generalised spherical minors of the pair $(G, \hG)$, are the pullback of the generalised spherical minors of the two smaller pairs, via the natural projections $G/\hG \longto G_1/\hG_1$ and $G/\hG \longto G_2/\hG_2.$ 
Thus, algebraic relations between generalised spherical minors of the pair $(G,\hG)$ are determined by the relations between the minors of the two factors.

\bigskip

\noindent For the rest of this introduction, we assume that $\hG$ is simple. In Section \ref{sec:fund identities}, we associate to the pair $(G,\hG)$ an element $z \in Z_m$ and an integer coefficients matrix $A=(a_{b,b'}) \in \ZZ^{\Bc \times \Bc}$.  Moreover, to any  $b \in \Bc$, we attach an element $\hw_ b \in W$ and a set $U_b \subseteq W$ such that $e \in U_b$.  Then, we prove the following theorem.

\begin{theo}
    \label{thm:fundamental identities sph mnors}
   Let $b \in \Bc$ and $w \in W$. If the conditions 
   $$\ell(y\inv w u y)= \ell( y \inv w y) + \ell(y\inv u y) \quad \text{and} \quad \ell( y\inv w u \hw_b y)= \ell( y\inv w y ) + \ell(y\inv u \hw_b y)$$
   are satisfied  for any $u \in U_b$,
then the following identity holds:

   \begin{equation}
   \label{eq:fundamental iddentity sh minors}
      \Delta^b_{w} \Delta^b_{w \hw_b } =  \sum_{u \in U_b \setminus \{e\}} (-1)^{\ell(y\inv u y)-1} \Delta^b_{wu} \Delta^b_{w u \hw_b} + \epsilon_b \prod_{b' \in \Bc \setminus \{b\}} \biggl( \Delta^{b'}_{w} \biggr)^{-a_{b,b'}},
       \end{equation}
       for some $\epsilon_b \in \{\pm1\}$, which only depends on $b$.
\end{theo}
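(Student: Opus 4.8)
The plan is to reduce the identity \eqref{eq:fundamental iddentity sh minors} to a computation in a single (well-chosen) $G$-representation, by mimicking the strategy of \cite{fomin1999double} for ordinary generalised minors. Since $\Delta^b_w$ is, up to the translation by $z$ built into Definition \ref{def:sph minors}, a matrix coefficient of $V(b)$ for a $\hG$-fixed vector and a suitable extremal weight vector, the product $\Delta^b_w \Delta^b_{w\hw_b}$ lives naturally in $\CC[G/\hG]$ graded by $\Lambda^+_{G/\hG}$; the length-additivity hypotheses $\ell(y\inv wuy)=\ell(y\inv wy)+\ell(y\inv uy)$ and the analogous one with $\hw_b$ are exactly what is needed so that the relevant extremal weight vectors multiply without ``collision'', i.e. so that each term $\Delta^b_{wu}\Delta^b_{wu\hw_b}$ is a genuine (nonzero) generalised spherical minor and not a degenerate product. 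First I would use $G$-equivariance (translating by $w\inv$, or rather by a lift $\bar w$) to reduce to the case $w=e$, so that it suffices to prove
\[
\Delta^b_{e}\,\Delta^b_{\hw_b} \;=\; \sum_{u\in U_b\setminus\{e\}} (-1)^{\ell(y\inv u y)-1}\,\Delta^b_{u}\,\Delta^b_{u\hw_b} \;+\; \epsilon_b \prod_{b'\neq b}\bigl(\Delta^{b'}_{e}\bigr)^{-a_{b,b'}} .
\]

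Next I would identify the set $U_b$ and the element $\hw_b$ concretely. Following the construction announced for Section \ref{sec:fund identities}, $\hw_b$ should be (a twist by $y$ of) the minimal element moving the weight line $V(b)_{b}$ in a controlled way, and $U_b$ should index a ``Plücker-type'' straightening relation: the vectors $\{u\cdot v_b^+\}_{u\in U_b}$ together with the $\hG$-fixed vectors of the other fundamental spherical representations span the weight space of $V(b)\otimes V(b)$ (or of a single $V(2b)$-isotypic piece inside $\Lambda^2$ or $\mathrm{Sym}^2$) in which the products $\Delta^b_{wu}\Delta^b_{wu\hw_b}$ all lie. The heart of the argument is then a rank-one / $\mathfrak{sl}_2$ reduction: restrict to the $SL_2$ (or its image) associated to the relevant root and write down the classical two-term Plücker relation $\Delta_e\Delta_{s} + \Delta_s\Delta_e$-type identity, then propagate it through $W$ using the length hypotheses to control signs, the signs $(-1)^{\ell(y\inv uy)-1}$ coming from the reordered product of simple-root $SL_2$'s (the $y\inv(\cdot)y$ conjugation is precisely the bookkeeping that converts $\hG$-side lengths into $G$-side lengths, as in Lemma \ref{lem:sph minors independent z}). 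The ``correction term'' $\epsilon_b\prod_{b'\neq b}(\Delta^{b'}_e)^{-a_{b,b'}}$ is the new feature relative to \cite{fomin1999double}: it arises because in the non-group case the weight $2b$ need not be the sum of two spherical extremal weights in only one way, and the matrix $A=(a_{b,b'})$ records exactly how the ``missing'' weight $2b - (\text{extremal part})$ decomposes over the basis $\Bc$; one shows $V(b)^{\otimes 2}$ (or the appropriate functorial square) contains a copy of $V\bigl(\sum_{b'} (-a_{b,b'}) b'\bigr)$ with $\hG$-fixed vector equal to $\prod_{b'\neq b}(\Delta^{b'}_e)^{-a_{b,b'}}$ up to a sign $\epsilon_b$.

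Concretely the steps are: (i) record the defining matrix-coefficient formula for $\Delta^\lambda_w$ and reduce to $w=e$ by equivariance; (ii) express each $\Delta^b_u$, $u\in U_b$, and each $\Delta^{b'}_e$ as the pairing of a fixed $\hG$-invariant functional with an explicit weight vector in $V(b)$, resp. $V(b')$; (iii) inside $V(b)\otimes V(b)$ identify the (one-dimensional, by minimal rank) weight space of weight $2b$ pulled back to the diagonal, show that the vectors $\{u v_b^+ \otimes u\hw_b v_b^+\}_{u\in U_b}$ and the image of $\bigotimes_{b'\neq b} (v_{b'}^+)^{\otimes(-a_{b,b'})}$ satisfy a single linear dependence, and read off the coefficients; (iv) evaluate the $\hG$-fixed functional on this linear dependence and check that the length hypotheses guarantee none of the terms vanish and fix all signs. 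The main obstacle I anticipate is step (iii)–(iv): pinning down the matrix $A$ and the global sign $\epsilon_b$ uniformly across the six families of Theorem \ref{theo:list sph rank min} — in the diagonal case $G=\hG\times\hG$ this degenerates to the known \cite{fomin1999double}[Theorem 1.17] with no correction term ($A=0$), but for $(\SL_{2n},\Sp_{2n})$, $(\mathrm{Spin}_{2n},\mathrm{Spin}_{2n-1})$, $(\mathrm{Spin}_7,G_2)$ and $(E_6,F_4)$ one must compute the relevant tensor-product multiplicities and the $\hG$-fixed vectors explicitly, most likely case by case, and verify that the resulting relation indeed has the claimed ``LP-algebra exchange'' shape. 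Controlling the interplay between the two length conditions (the one with $\hw_b$ and the one without) so that the same set $U_b$ works for both is the other delicate point, and is where the specific choice of $z\in Z_m$ made in Section \ref{sec:fund identities} will be used essentially.
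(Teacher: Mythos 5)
Your reduction to $w=e$ via translation by $\widetilde w$ and your intuition for where $\lambda_b=\sum_{b'\neq b}(-a_{b,b'})b'$ and the sign $\epsilon_b$ come from are right, and you correctly anticipate that the data $(z,A,\hw_b,U_b)$ must be handled case by case. But the core mechanism you propose in step (iii) has a genuine gap: you cannot write ``a single linear dependence'' between the vectors $u v_b^+\otimes u\hw_b v_b^+$ and $\bigotimes_{b'\neq b}(v_{b'}^+)^{\otimes(-a_{b,b'})}$, because these live in different vector spaces ($V(b)\otimes V(b)$ versus $\bigotimes_{b'\neq b}V(b')^{\otimes(-a_{b,b'})}$). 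The identity only makes sense after all terms are mapped into the common ambient algebra $\CC[G/\hG]$, and at that point one needs a way to compare them there. The relevant weight space of $V(b)\otimes V(b)$ is also not one-dimensional in general; what is one-dimensional (by the Peter–Weyl decomposition \eqref{eq:spherical peter weyl}) is the $yUy^{-1}$-invariant subspace of $\CC[G/\hG]$ of any fixed $T$-weight in $-y\Lambda^{+*}_{G/\hG}$, and exploiting that is what your sketch is missing.

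The paper's argument supplies exactly this. It sets $f_b:=\sum_{u\in U_b}(-1)^{\ell(y^{-1}uy)}\Delta^b_u\Delta^b_{u\hw_b}$ and proves three claims: (1) $Z_\delta\cdot f_b=0$ for every $\delta\in y\Delta$, i.e. $f_b$ is annihilated by $\mathrm{Lie}(yUy^{-1})$, verified via the Leibniz rule and the explicit action in Corollary \ref{cor: Z delta sph minor.}; (2) all products $\Delta^b_u\Delta^b_{u\hw_b}$ have $T$-weight $-y\lambda_b^*$, reducing to $yb^*+\hw_b yb^*=y\lambda_b^*$ since $U_b\subseteq W_{y\lambda_b^*}$; (3) $f_b(e)\in\{\pm1\}$, using Corollary \ref{cor: sph minors on e} and the fact that $\hw_b\in\hW$. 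Together with the uniqueness characterisation of Lemma \ref{lem:characterisation generalised spherical minors}, claims (1)–(3) force $f_b=\pm\Delta^{\lambda_b}_e$, which is the desired identity at $w=e$. The infinitesimal invariance in Claim 1 is the step your proposal does not articulate, and it is precisely what replaces the ill-posed ``linear dependence in $V(b)\otimes V(b)$''; your mention of a rank-one $\sl_2$-reduction gestures at it, but the actual content is the cancellation, via Leibniz, of the derivatives $Z_\delta\cdot(\Delta^b_u\Delta^b_{u\hw_b})$ across consecutive $u\in U_b$, which depends on a case-by-case check (Tables \ref{tab:sl sp}, \ref{tab:E6,F4}, \ref{tab:B3 G2}) that $s_\delta$ permutes the index set $U_b\cup U_b\hw_b$ suitably. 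Without identifying and proving that cancellation, the argument does not close.
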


 The matrix $A$, the elements $\hw_b$ and the sets $U_b$ are defined case by case, accordingly to the list of pairs of Theorem \ref{theo:list sph rank min}. A pair independent description of the previous data would be of great interest, but it is still missing . 
 The only exception concerns the matrix $A$. Indeed, it agrees with the Cartan matrix of the small root system of the pair $(G, \hG)$. Still, we have no conceptual explanation of this fact. Moreover, we have the following conjecture.

\begin{conjecture}
    \label{conj: sign epsilon_b}
    In the notation of Theorem \ref{thm:fundamental identities sph mnors}, we have that $\epsilon_b= (-1)^{|U_b|}.$
\end{conjecture}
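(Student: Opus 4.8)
The plan is to exploit the fact, already part of Theorem~\ref{thm:fundamental identities sph mnors}, that $\epsilon_b$ depends only on $b$: it therefore suffices to compute $\epsilon_b$ for one convenient choice of $w$ and at one convenient point of $G/\hG$. The cleanest specialisation is $w=e$. Since $e\in U_b$ and $y\inv e y=e$ has length $0$, both length hypotheses of Theorem~\ref{thm:fundamental identities sph mnors} are automatically satisfied for $w=e$, and \eqref{eq:fundamental iddentity sh minors} becomes
\begin{equation*}
   \Delta^b_{e}\,\Delta^b_{\hw_b}
   = \sum_{u \in U_b \setminus \{e\}} (-1)^{\ell(y\inv u y)-1}\,\Delta^b_{u}\,\Delta^b_{u \hw_b}
   + \epsilon_b \prod_{b' \in \Bc \setminus \{b\}} \bigl( \Delta^{b'}_{e} \bigr)^{-a_{b,b'}} ,
\end{equation*}
an identity of \emph{regular} functions on $G/\hG$ (it is polynomial, not merely Laurent, because $A$ is a Cartan matrix, so $-a_{b,b'}\ge 0$ for $b'\ne b$). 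I would then evaluate this identity at a point $p_0\in G/\hG$ at which the values of all the participating minors are controlled by the general properties of generalised spherical minors established earlier; the natural candidates are the base point $e\hG$ and points of the open $B$-orbit $B\overline z\hG$. Either one turns the functional identity into a numerical identity among explicit integers (typically $0$ and $\pm1$), from which the sign $\epsilon_b$ is read off once the values $\Delta^b_{\hw_b}(p_0)$, $\Delta^b_u(p_0)$, $\Delta^b_{u\hw_b}(p_0)$ and $\Delta^{b'}_e(p_0)$ are known.

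Carrying this out is genuinely case dependent, because $\hw_b$, $U_b$ and $A$ are defined separately for each family of Theorem~\ref{theo:list sph rank min}. The diagonal case $\hG\subset G=\hG\times\hG$ both anchors the argument and explains the shape of the answer: there $U_b$ has two elements and \eqref{eq:fundamental iddentity sh minors} reduces to Fomin--Zelevinsky's identity, whose constant term enters with sign $+1=(-1)^2=(-1)^{|U_b|}$. For the remaining pairs $(\SL_{2n},\Sp_{2n})$, $(\Spin_{2n},\Spin_{2n-1})$, $(\Spin_7,G_2)$, $(E_6,F_4)$ one performs the evaluation above using the explicit $U_b$ and $\hw_b$ from Section~\ref{sec:fund identities}, keeping track of the signs $(-1)^{\ell(y\inv u y)-1}$ and of the sign of $\Delta^b_{\hw_b}(p_0)$; the assertion to be verified is that these contributions combine to $(-1)^{|U_b|}$.

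I expect the main obstacle to be precisely this sign bookkeeping. Evaluating at $p_0$ one must control simultaneously the sign of $\Delta^b_{\hw_b}(p_0)$, the signs of the surviving summands indexed by $u\in U_b\setminus\{e\}$, and---when not all of those summands vanish at $p_0$---their mutual cancellations, with enough uniformity across the four exceptional families to recognise the total as $(-1)^{|U_b|}$ rather than some other sign. A potentially more conceptual alternative would be to trace the sign through the proof of Theorem~\ref{thm:fundamental identities sph mnors} itself: if that identity is obtained by specialising a single determinantal or Plücker-type relation, then $\epsilon_b$ is the product of the signs attached to the individual terms of that relation, and Conjecture~\ref{conj: sign epsilon_b} amounts to saying this product equals $(-1)^{|U_b|}$; but promoting the existing case-by-case proof to such a uniform identity is exactly the pair-independent description that, as remarked after Theorem~\ref{thm:fundamental identities sph mnors}, is still missing.
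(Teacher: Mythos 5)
The statement you are addressing is Conjecture~\ref{conj: sign epsilon_b}, which the paper explicitly leaves open: there is no proof of it in the text, only consistency checks (the diagonal case, where $|U_b|=2$ and $\epsilon_b=1$ by Fomin--Zelevinsky, and Example~\ref{ex:relations minors sl sp n=2}, where $|U_{\varpi_2}|=3$ and $\epsilon_{\varpi_2}=-1$). Your proposal is likewise not a proof but a plan, and its first step is already carried out in the paper: setting $w=e$ and evaluating at the identity coset, Claim~3 together with Corollary~\ref{cor: sph minors on e} kills every summand indexed by $u\in U_b\setminus\{e\}$ and gives $\Delta^b_e(e)=\Delta^{b'}_e(e)=1$, whence $\epsilon_b=\Delta^b_{\hw_b}(e)$ --- this is precisely the remark following Claim~3 in Section~\ref{sec:fund identities}. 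So your reduction reproduces what the author already knows; the conjecture is exactly the assertion that this residual quantity equals $(-1)^{|U_b|}$.

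The genuine gap is that you never compute $\Delta^b_{\hw_b}(e)$. By Corollary~\ref{cor: hw tilde almost in hG} and the proof of Corollary~\ref{cor: sph minors on e}, one has $\widetilde{\hw_b}=\hg k$ with $\hg\in\hG$ and $k\in K$, and $\Delta^b_{\hw_b}(e)=(-y b^*)(k\inv)$; pinning down which element $k$ of the finite group $K$ occurs requires tracking the factors $\alpha^\vee(-1)$ produced by Lemma~\ref{lem: v bar w bar} through a reduced expression of $y\inv\hw_b y$, case by case, and then relating the resulting sign to the cardinality $|U_b|$ (which varies: it is $n$ for $(\Spin_{2n},\Spin_{2n-1})$, $5$ for $(E_6,F_4)$, $4$ for $(B_3,G_2)$). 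You correctly identify this sign bookkeeping as the obstacle, but identifying an obstacle is not the same as overcoming it: as written, your argument establishes nothing beyond what the paper already records, and the conjecture remains open.
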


 \noindent When $G=\hG \times \hG$, by Lemma \ref{lem:sph minors tensor prod equals FZ minors}, generalised spherical minors recover Fomin-Zelevinsky generalised minors. The relations described in Theorem \ref{thm:fundamental identities sph mnors} specialize to \cite{fomin1999double}[Theorem 1.17].
 The relations provided by \cite{fomin1999double}[Theorem 1.17] stand at the very base of all the known cluster structures 
related to algebraic groups, such as the one considered in 
\cite{berenstein2005cluster3} \cite{geiss2008partial}, \cite{geiss2011kac}, \cite{goodearl2021integral}, \cite{galashin2023braid}, \cite{fei2016tensor}, \cite{francone2023minimal}.
Moreover, together with the companion relations provided by \cite{fomin1999double}[Theorem 1.16], they allow to unravel deep links between double Bruhat cells, total positivity, the combinatorial properties of the Luszting canonical basis and the Lusztig zones. These deep connections ultimately led to the construction of a class of polyhedral models for tensor product multiplicities known as $\ii$-trails models. See \cite{berenstein1999tensor} and \cite{zelevinsky2001littlewood}.
 We observe that the relations described in Theorem \ref{thm:fundamental identities sph mnors} always have the shape of exchange relations of LP-algebras  in the sense of \cite{lam2016laurent}. Nevertheless, they are  manifestly written in the form of exchange relations of cluster and generalised cluster algebras only when $G= \hG \times \hG$. Indeed, in any other case, the right hand side of \eqref{eq:fundamental iddentity sh minors} consists of at least 3 monomials in some generalised spherical minors.
 We hope that our result could be used to construct LP-algebras structures, in the context of spherical pairs of minimal ranks, generalising some of the well known cluster algebras structures previously mentioned. 
 We also hope that Theorem \ref{thm:fundamental identities sph mnors} could shed a light on some interpretation of the combinatorial properties of the Lusztig zones, as defined in \cite{lusztig2022total}.

\section{Complements on spherical subgroups of minimal rank}

\subsection{Root systems and Weyl groups}
\label{sec: reminers sph rank min}

Most of the contents of this section is adapted from \cite{spherangmin} and \cite{francone2023multiplicity}. We worn the reader that the present notation exchanges the role of $G$ and $\hG$ with respect to the one used in \cite{francone2023multiplicity}. 

\bigskip

We fix respectively a  maximal torus and Borel subgroup $\hT$ and $\hB$ of $\hG$. 
We make the assumption that $T \cap \hG= \hT$ and $B \cap \hG=\hB$.
This implies that $\hT \subseteq \hB$. 
Let $U$ and $\hU$ denote the unipotent radicals of $B$ and $\hB$ respectively.
The notation $X(\hT)$ stands for the character group of $\hT.$
Moreover, denote by $\Phi$  (resp. $\hPhi$)  the root system of $G$ (resp. $\hG$). 
We denote by $\Phi^+$ (resp. $\hPhi^+$) the set of positive roots determined by $B$ (resp. $\hB$). Moreover we set $\Phi^-:= - \Phi^+$ and $\hPhi^-:= - \hPhi^+$.

Let $\rho : X(T) \longto X(\hT)$ the restriction map. By \cite{spherangmin}[Lemma 4.2], we have that $\rho$ maps $\Phi$ to $\hPhi$.
Therefore, we can define $\overline{ \rho}: \Phi \longto \hPhi$ as the restriction of $\rho$. 
Let 
$$
\hPhi^1:= \{ \hbeta \in \hPhi \, : \, |\bar \rho\inv(\hbeta)|= 1 \} \quad  
\hPhi^2:= \{ \hbeta \in \hPhi \, : \, |\overline{\rho}\inv(\hbeta)|= 2 \}. 
$$
By \cite{spherangmin}[Lemma 4.4], we have that $\hPhi= \hPhi^1 \cup \hPhi^2.$ 
Hence $\Phi= \Phi^1 \cup \Phi^2$, where $\Phi^1= \bar \rho \inv( \hPhi^1)$ and $\Phi^2= \bar \rho \inv( \hPhi^2)$. 
Moreover, by \cite{spherangmin}[Lemma 4.6] we have that $\bar \rho \inv( \hDel)= \Delta$. We use the notation
$$
\hDel^{i}:=\hPhi^i \cap \hDel  \quad \text{and} \quad \Delta^i= \Phi^i \cap \Delta \quad \text{for} \, \, i=1,2.
$$
Observe that, in cases 3 to 6 of Theorem \ref{theo:list sph rank min}, we have that $\hPhi^1= \hPhi^l$ and $\hPhi^2=\hPhi^s$ where $\hPhi^l$ (resp. $\hPhi^s$) denotes the set of long (resp. short) roots. Finally, if $\hb \in \hPhi^2$, then $\bar \rho \inv(\hb)$ consists of two strongly orthogonal roots.

\bigskip

Let $\hW$ be the Weyl group of $\hG$ with respect to $\hT$. 
By \cite{Br:ratsmooth}[Lemma 2.3], we have that $\hT$ is a regular torus of $G$. Hence $\hW$ is a subgroup of $W$. 
As a consequence of \cite{spherangmin}[Lemma 4.1, Lemma 4.3] we have the following lemma.

\begin{lemma}
    \label{lem: reflections sph rk min}
    Let $\beta \in \Phi$ and $\hbeta= \rho(\beta)$. Let $s_\beta \in W$ (resp. $s_{\hb}\in \hW$) be the reflection associated $\beta$ (resp. $\hbeta$). The following holds.

    \begin{enumerate}
        \item If $\beta \in \Phi^1$, then $s_\beta= s_\hbeta$.
        \item If $\beta \in \Phi^2$ and $\bar \rho\inv(\hbeta)=\{ \beta, \beta'\}$, then $s_\hbeta=s_\beta s_{\beta'}= s_{\beta'}s_\beta$.
        \item The reflection $s_\beta $ belongs to $ \hW$ if and only if $\beta \in \Phi^1.$
    \end{enumerate}
\end{lemma}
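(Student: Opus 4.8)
The statement to prove is a structural fact about how reflections in $W$ restrict under $\bar\rho$ and when they land in $\hW$. The plan is to deduce everything from the two cited results \cite{spherangmin}[Lemma 4.1, Lemma 4.3], which presumably describe the root datum of $\hG$ inside that of $G$ under the minimal rank hypothesis, together with elementary root-system combinatorics.

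\textbf{Step 1 (case $\beta \in \Phi^1$).} Here $\bar\rho\inv(\hbeta) = \{\beta\}$, so $\beta$ is the unique root of $G$ restricting to $\hbeta$. I would argue that $s_\beta$ normalizes $\hT$ (since $\hT$ is a regular torus of $G$ by \cite{Br:ratsmooth}[Lemma 2.3], and $s_\beta$ lies in $W = N_G(T)/T$, hence in $N_G(\hT)/\hT$ after checking $s_\beta$ preserves $\hT$ — this is exactly the content of the cited lemmas). Then $s_\beta$ acts on $X(\hT)$ as an element of $\hW$, and its action is the reflection fixing the hyperplane $\hbeta^\perp$; uniqueness of the reflection in a given reflection hyperplane inside a finite Coxeter group forces $s_\beta = s_\hbeta$.

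\textbf{Step 2 (case $\beta \in \Phi^2$).} Now $\bar\rho\inv(\hbeta) = \{\beta,\beta'\}$ with $\beta,\beta'$ strongly orthogonal (as recalled just above the lemma). Strong orthogonality gives $s_\beta s_{\beta'} = s_{\beta'} s_\beta$. To identify this product with $s_\hbeta$, I would compute its action on $X(\hT)$: on the restriction of any character, $s_\beta s_{\beta'}$ subtracts a multiple of $\rho(\beta)=\rho(\beta')=\hbeta$, and an explicit coweight pairing computation (using that $\beta^\vee + \beta'^\vee$ restricts to $\hbeta^\vee$, which is the precise normalization provided by \cite{spherangmin}[Lemma 4.1, Lemma 4.3]) shows the composite is the reflection $s_\hbeta$. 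Again uniqueness of reflections pins it down.

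\textbf{Step 3 (the ``if and only if'').} The implication $\beta\in\Phi^1 \Rightarrow s_\beta\in\hW$ is Step 1. For the converse, suppose $\beta\in\Phi^2$; then by Step 2, $s_\hbeta = s_\beta s_{\beta'}$ with $\beta\neq\beta'$, so $s_\beta = s_\hbeta s_{\beta'} \notin \hW$ would follow if I knew $s_{\beta'}\notin\hW$ — so instead I would argue directly: an element of $\hW$ preserves $\hPhi$ and acts trivially on the orthogonal complement of $\Span_\RR(\hPhi)$ inside $X(\hT)\otimes\RR$; but $s_\beta$ acts nontrivially on a vector that restricts into that complement (the reflection hyperplane of $s_\beta$ does not contain $\ker\rho$ when $\beta\in\Phi^2$). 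Alternatively, and more cleanly, I would count: $|\hW|$ times the index, and note that the reflections in $\hW$ are exactly $\{s_\hbeta : \hbeta\in\hPhi\}$, whose preimages under the correspondence $\beta\mapsto s_\beta$ are precisely the $s_\beta$ with $\beta\in\Phi^1$ together with the commuting products from $\Phi^2$ — so a single $s_\beta$ with $\beta\in\Phi^2$ is never equal to any $s_\hbeta$.

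\textbf{Main obstacle.} The delicate point is the precise coroot normalization in Step 2 — verifying that $s_\beta s_{\beta'}$ acts on $X(\hT)$ exactly as $s_\hbeta$ (and not as, say, a reflection in $2\hbeta$ or with the wrong coroot) — which is where the strong orthogonality of $\bar\rho\inv(\hbeta)$ and the explicit statements of \cite{spherangmin}[Lemma 4.1, Lemma 4.3] must be invoked carefully; and in Step 3, ruling out the coincidence $s_\beta = s_\hbeta$ for $\beta\in\Phi^2$ requires knowing that $\ker\rho$ meets the reflection hyperplane of $s_\beta$ in the expected codimension, which again reduces to the structure of the minimal-rank embedding. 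Everything else is standard finite-Coxeter-group bookkeeping.
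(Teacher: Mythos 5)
The paper does not actually give a proof of this lemma: it simply asserts that the statement is a consequence of \cite{spherangmin}[Lemma 4.1, Lemma 4.3] and stops there, so there is no proof in the text against which to compare your argument line by line. Your sketch therefore has to reconstruct what the cited lemmas provide and fill in the combinatorics, and the broad skeleton (compute the induced action on $X(\hT)$, invoke regularity of $\hT$ to get faithfulness, use uniqueness of reflections, then derive part 3 formally from parts 1 and 2) is the right approach.

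There is, however, a genuine problem in your treatment of the converse in Step 3. Your first proposed argument asserts that $s_\beta$ ``acts nontrivially on a vector that restricts into the orthogonal complement of $\Span_\RR(\hPhi)$ inside $X(\hT)\otimes\RR$.'' In the setting of the paper, $G$ is semisimple and simply connected, and by Lemma \ref{lem:G simply connected, Gh simply connected} so is $\hG$; thus $\Span_\RR(\hPhi) = X(\hT)\otimes\RR$ and the orthogonal complement you invoke is zero. The framing also conflates two different spaces: a vector in $\ker\rho$ restricts to $0$ in $X(\hT)$, not to something in that complement. What you seem to be reaching for is that $s_\beta$ with $\beta \in \Phi^2$ does not preserve $\ker\rho$ (because $\beta^\vee$ does not vanish on $\ker\rho$), whereas any element of $\hW$ must, since it normalizes $\hT$. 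That version of the argument would be fine, but as written it does not work. Your second ``counting'' argument is also not yet a proof: once you know from Steps 1--2 that every reflection of $\hW$ is either some $s_\gamma$ with $\gamma\in\Phi^1$ or some $s_\gamma s_{\gamma'}$ with $\gamma,\gamma'\in\Phi^2$ strongly orthogonal, you still need a reason why a single $s_\beta$ ($\beta\in\Phi^2$) cannot coincide with either. The clean way to close this is to compare $(-1)$-eigenspace dimensions on $X(T)\otimes\RR$: a reflection has a $1$-dimensional $(-1)$-eigenspace while $s_\gamma s_{\gamma'}$ with $\gamma\neq\gamma'$ strongly orthogonal has a $2$-dimensional one, and $s_\beta = s_\gamma$ forces $\beta = \pm\gamma \in \Phi^1$. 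That observation is missing from your sketch and is the actual content of the ``if and only if.''

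One smaller point: in Steps 1 and 2, ``uniqueness of the reflection with a given hyperplane'' is the right idea, but the statement one really needs is that $W$ acts faithfully on $X(\hT)\otimes\RR$, which follows from the regularity of $\hT$ in $G$ (as you note). You should make that the explicit engine of the argument, rather than phrasing it as a fact internal to the Coxeter group $\hW$, since a priori $s_\beta$ (resp. $s_\beta s_{\beta'}$) is only known to lie in $W$, not in $\hW$.
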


Recall that a collection $\ii= (i_m, \dots, i_1)$ (resp. $\wh \ii= (\wh i_m, \dots, \wh i_1)$), of elements of $\Delta$ (resp. $\hDel$), is a  \textit{reduced expression} for $w \in W$ (resp. $\hw \in \hW$) if $s_{i_m} \cdots s_{i_1}=w$ (resp. $s_{\wh i_m} \cdots s_{ \wh i_1}= \hw)$ and $m$ is minimal. 

\begin{lemma}
    \label{lem: compatible red expressions spherical}
    Let $\hw \in \hW$ and $\widehat \ii= (\widehat i_m, \dots , \widehat i_1)$ be a reduced expression for $\hw$, considered as an element of the Coxeter group $\hW$. Let $\ii$ be the expression obtained by replacing $\widehat \ii_k$ with any ordering of the simple roots in $\bar \rho \inv(\widehat \ii_k)$, for any $k \in [m]$. Then, $\ii$ is a reduced expression for $\hw$, considered as an element of the Coxeter group $W$.
\end{lemma}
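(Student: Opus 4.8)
The plan is to induct on $m=\ell_{\hW}(\hw)$, the length of $\hw$ in the Coxeter group $\hW$; the case $m=0$ is trivial. For $m\geq 1$, write $\hw=s_{\widehat i_m}\hw'$ with $\hw'=s_{\widehat i_{m-1}}\cdots s_{\widehat i_1}$, so that $(\widehat i_{m-1},\dots,\widehat i_1)$ is a reduced expression for $\hw'$ in $\hW$ and $\ell_{\hW}(\hw)=\ell_{\hW}(\hw')+1$. By the inductive hypothesis the word $\ii'$ obtained from $(\widehat i_{m-1},\dots,\widehat i_1)$ by the prescribed substitution is a reduced expression for $\hw'$ in $W$. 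Let $\ii=(\mathbf b,\ii')$, where $\mathbf b$ is the block replacing $\widehat i_m$. Since $\bar\rho\inv(\hDel)=\Delta$ the entries of $\mathbf b$ lie in $\Delta$, and by Lemma~\ref{lem: reflections sph rk min}(1)--(2) the product of the reflections in $\mathbf b$ equals $s_{\widehat i_m}$; hence $\ii$ is a word in the simple reflections of $W$ representing $\hw$, with exactly $\ell_W(\hw')+|\mathbf b|$ letters, where $|\mathbf b|:=|\bar\rho\inv(\widehat i_m)|\in\{1,2\}$. So it suffices to prove
\[
\ell_W(\hw)=\ell_W(\hw')+|\mathbf b|.
\]

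Two facts drive the computation. First, $\bar\rho$ is $\hW$-equivariant: this is immediate from the fact recalled above that $\hW\subseteq W$ via representatives in $N_{\hG}(\hT)\subseteq N_G(T)$ (using regularity of $\hT$), since for $w\in\hW$ and a character $\lambda$ both $w\cdot\lambda$ and $w\cdot\rho(\lambda)$ are computed by conjugating by a common such representative. Second, $\Phi^+=\bar\rho\inv(\hPhi^+)$: indeed $\rho$ is additive and $\rho(\Delta)\subseteq\hDel$, so $\rho$ sends a root which is a nonnegative combination of simple roots to a nonnegative combination of simple roots, giving $\bar\rho(\Phi^+)\subseteq\hPhi^+$; by the symmetry $\bar\rho(-\beta)=-\bar\rho(\beta)$ also $\bar\rho(\Phi^-)\subseteq\hPhi^-$, and since $\bar\rho\inv(\hPhi^+)$ and $\bar\rho\inv(\hPhi^-)$ partition $\Phi$ this forces equality; in particular a root $\beta$ is positive if and only if $\bar\rho(\beta)\in\hPhi^+$. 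Now put $\widehat\alpha=\widehat i_m$. From $\ell_{\hW}(s_{\widehat\alpha}\hw')=\ell_{\hW}(\hw')+1$ and the usual length criterion in $\hW$ we get $(\hw')\inv(\widehat\alpha)\in\hPhi^+$. If $\widehat i_m\in\hDel^1$, then $\bar\rho\inv(\widehat\alpha)=\{\alpha\}$ with $\alpha\in\Delta$, $s_{\widehat\alpha}=s_\alpha$, and $|\mathbf b|=1$; by equivariance $\bar\rho\big((\hw')\inv\alpha\big)=(\hw')\inv\widehat\alpha\in\hPhi^+$, so $(\hw')\inv\alpha\in\Phi^+$ and $\ell_W(s_\alpha\hw')=\ell_W(\hw')+1$. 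If $\widehat i_m\in\hDel^2$, then $\bar\rho\inv(\widehat\alpha)=\{\alpha,\alpha'\}\subseteq\Delta$ with $\alpha,\alpha'$ strongly orthogonal, $s_{\widehat\alpha}=s_\alpha s_{\alpha'}=s_{\alpha'}s_\alpha$, and $|\mathbf b|=2$; as before $(\hw')\inv\alpha$ and $(\hw')\inv\alpha'$ both lie in $\Phi^+$, so $\ell_W(s_{\alpha'}\hw')=\ell_W(\hw')+1$, and since $\langle\alpha,(\alpha')^\vee\rangle=0$ one has $s_{\alpha'}\alpha=\alpha$, whence $(s_{\alpha'}\hw')\inv\alpha=(\hw')\inv\alpha\in\Phi^+$ and $\ell_W(s_\alpha s_{\alpha'}\hw')=\ell_W(\hw')+2$. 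The ordering of $\alpha,\alpha'$ is immaterial because $s_\alpha$ and $s_{\alpha'}$ commute. This proves the displayed identity and closes the induction.

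The only genuinely delicate point is the $\hW$-equivariance of $\bar\rho$ together with the identity $\Phi^+=\bar\rho\inv(\hPhi^+)$; once these are in hand, everything else is the standard behaviour of inversion sets under left multiplication by a simple reflection. The same computation can be packaged non-inductively: along any reduced $\hW$-expression of $\hw$, each letter lying in $\hDel^1$ contributes $1$ and each letter in $\hDel^2$ contributes $2$ to the $W$-length, so $\ell_W(\hw)$ equals the number of letters of $\ii$; since $\ii$ represents $\hw$, it is a reduced expression.
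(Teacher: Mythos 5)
Your proof is correct and takes essentially the same route as the paper's: induction on $m$, $\hW$-equivariance of $\bar\rho$, the sign compatibility $\Phi^+=\bar\rho\inv(\hPhi^+)$, and the dichotomy that $\bar\rho\inv(\ha)$ is either a singleton or a pair of strongly orthogonal roots. You merely make explicit the inductive step and the two auxiliary facts that the paper states in a single sentence and leaves to the reader.
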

 
\begin{proof}
     Let $\widehat v \in \hW$ and $\ha \in \hDel$. Since $\bar \rho$ is $\hW$-equivariant,  $\widehat v \ha \in \hPhi^-$ if and only if $\widehat v \bar \rho\inv(\ha) \subseteq \Phi^-.$ Using that $\bar \rho \inv(\ha)$ cosists of either one root, or of two strongly orthogonal roots, the proof follows by an easy induction on $m$. 
\end{proof}

\subsection{Compatible pinnings}

By \cite{francone2023multiplicity}[Lemma 3], assigning to an element $w \in W$ the orbit $Bw\hG/\hG$ gives a bijection between the quotient $W/\hW$ and the set of $B$-orbits in $G/\hG.$ Therefore, the sets $Z$ and $Z_m$ of the introduction are non-empty.
Let $z \in Z_m$ 
and define 
$$
y=z\inv.
$$
We refer to \cite{francone2023multiplicity}[Section 4] for examples of elements of $Z_m$. (In \cite{francone2023multiplicity}[Section 4], an element belonging to $Z_m$ is denoted denoted by $\hy_0$).

By \cite{francone2023multiplicity}[Lemma 5], we have that for any $\hbeta \in \hPhi^+ \cap \hPhi^2$ there exists exactly one root in $\bar \rho^{-1}(\hb)$ which is sent to $\Phi^-$ by the action of $z$. Therefore, we adopt the following notation for labelling roots.
\begin{enumerate}
    \item[-] If $\hbeta \in \Phi^1 \cap \Phi^+$, then $\bar \rho \inv(\hbeta)= \{ \beta \}$.
     \item[-] If $\hbeta \in \Phi^2 \cap \Phi^+$, then $\bar \rho \inv(\hbeta)= \{ \beta_\pgg, \beta_\ngg\}$ where $ z \cdot \beta_\ngg\in \Phi^-$.
\end{enumerate}
We stress that this notation depends on the choice of the element $z \in Z_m$ and it is therefore only used when such an element is fixed or sufficiently clear from the context.

\bigskip

Let $\lg$ be the Lie algebra of $G$. If $\beta \in \Phi$, we denote by $\lg_\beta$ the corresponding root space. Moreover, a triple $(X_\beta, H_\beta, X_{-\beta})$ consisting of elements $X_{\pm \beta} \in \lg_{\pm \beta}$ such that $[X_\beta, X_{-\beta}]= H_\beta$ and $[H_\beta, X_{\beta}]=2X_\beta$ is said to be an \textit{$\sl_2$-triple} for $\beta$. 
A similar notation, topped by a hat symbol, is used relatively to $\hG$.
\begin{definition}
\label{def:compatible sl2 triples}
    Let $\hbeta \in \hPhi^+$ and $(X_\hbeta, H_\hbeta, X_{-\hbeta})$ be an $\sl_2$-triple for $\hbeta$.
\begin{itemize}
    \item[-] If $\hbeta \in \hPhi^1$, an $\sl_2$-triple $(X_\beta, H_\beta, X_{-\beta})$ for $\beta$  is \textit{compatible} if
    $$
    X_\beta=X_\hbeta \quad H_\beta=H_\hbeta \quad  X_{-\beta}= X_{-\hbeta}.
    $$ 
    \item[-] If $\hbeta \in \hPhi$, two $\sl_2$ triples $(X_{\beta_\ngg}, H_{\beta_\ngg}, X_{-\beta_\ngg})$ and $(X_{\beta_\pgg}, H_{\beta_\pgg}, X_{-\beta_\pgg})$ for $\beta_\ngg$ and $\beta_\pgg$ respectively are said to be compatible if 
    $$
    X_{\beta_\ngg}+ X_{\beta_\pgg}= X_\hbeta \quad H_{\beta_\ngg}+ H_{\beta_\pgg}= H_\hbeta \quad  X_{-\beta_\ngg}+X_{-\beta_\pgg} =X_{-\hbeta}.
    $$
\end{itemize}
\end{definition}

By \cite{francone2023multiplicity}[Lemma 9], for any $\hb \in \hPhi^+$, any $\sl_2$-triple for $\hbeta$  admits compatible $\sl_2$-triples, which are obviously unique. 

From now on, for any simple root $\ha \in \hDel $, we fix an $\sl_2$-triple $(X_\ha, H_\ha, X_{-\ha})$ for $\ha$ and the corresponding compatible $\sl_2$-triples in $\lg$, for which we use the notation of Definition \ref{def:compatible sl2 triples}. We refer to the previous data as a choice of a \textit{compatible pinning} for the pair $(G, \hG)$.

We use the notation
$$
x_{\pm \ha}(t):= \exp(t X_{\pm \ha}), \quad x_{\pm \alpha}(t):= \exp(t X_{\pm \alpha}) \qquad (\ha \in \hDel, \, \, \alpha \in \Delta, \, \, t \in \CC). 
$$
Moreover, we denote by $\alpha^\vee: \CC^* \longto T$ (resp. $\ha^\vee : \CC^* \longto \hT$) the one parameter subgroup corresponding to the root $\alpha \in \Delta$ (resp. $\ha \in \hDel$).

\noindent Note that, if $t \in \CC^*$ and $\halpha \in \hDel^1$, then

\begin{equation}
\label{eq:ops sph rk min type 1}
    x_\halpha(t)=x_\alpha(t) \quad \halpha^\vee(t)=\alpha^\vee(t) \quad x_{-\halpha}(t)=x_{-\alpha}(t).
\end{equation}
Moreover, if $t \in \CC^*$ and $\halpha \in \hDel^2$, then

\begin{equation}
\label{eq:ops sph rk min type 2}
  x_\halpha(t)=x_{\alpha_\ngg}(t)x_{\alpha_\pgg}(t) \quad \halpha^\vee(t)=\alpha_\ngg^\vee(t)\alpha_\pgg^\vee(t) \quad x_{-\halpha}(t)=x_{-\alpha_\ngg}(t)x_{-\alpha_\pgg}(t).
\end{equation}
because of \cite{francone2023multiplicity}[Lemma 9].
We conclude this section with a useful lemma and an example.

\begin{lemma}
    \label{lem:G simply connected, Gh simply connected}
    If $G$ is semisimple (resp. semisimple and simply connected), so is $\hG$.
\end{lemma}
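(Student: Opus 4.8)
The plan is to treat the two claims in turn; the minimal rank hypothesis enters only in the semisimple part, and simple connectedness then follows by a lattice computation using the compatible pinning. Assume first that $G$ is semisimple. Since $\hG$ is connected reductive, it suffices to show that its connected centre $Z(\hG)^{\circ}$ is trivial. This is a central torus, hence contained in every maximal torus of $\hG$; by condition (2) of the minimal rank assumption the stabiliser $\hG_y$ contains such a torus, so $Z(\hG)^{\circ}\subseteq\hG_y$. Being central in $\hG$ and fixing $y$, the torus $Z(\hG)^{\circ}$ then fixes the whole orbit $\hG\cdot y$, because $t\cdot(g\cdot y)=g\cdot(t\cdot y)=g\cdot y$ for $g\in\hG$ and $t\in Z(\hG)^{\circ}$. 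As $\hG\cdot y$ is dense in $Y=G/B$ and fixed-point loci are closed, $Z(\hG)^{\circ}$ acts trivially on $G/B$, so $Z(\hG)^{\circ}\subseteq\bigcap_{g\in G}gBg^{-1}$. The identity component of $\bigcap_{g}gBg^{-1}$ is a connected normal solvable subgroup of $G$, hence lies in the radical $R(G)=1$, so $\bigcap_{g}gBg^{-1}$ is finite; therefore $Z(\hG)^{\circ}=1$ and $\hG$ is semisimple.

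Assume now in addition that $G$ is simply connected, so $X(T)=P(\Phi)$ is the weight lattice of $\Phi$. By the previous step $\hG$ is semisimple, whence $Q(\hPhi)\subseteq X(\hT)\subseteq P(\hPhi)$, and $\hG$ is simply connected if and only if $X(\hT)=P(\hPhi)$, that is, if and only if $\varpi_{\ha}\in X(\hT)$ for every $\ha\in\hDel$. Since $\hT$ is a subtorus of $T$, the restriction map $\rho\colon X(T)\to X(\hT)$ is surjective, so it is enough to exhibit, for each $\ha\in\hDel$, an element $\mu\in X(T)=P(\Phi)$ with $\rho(\mu)=\varpi_{\ha}$. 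I would take $\mu=\varpi_{\alpha}$ for some $\alpha\in\bar\rho^{-1}(\ha)$, namely the unique preimage if $\ha\in\hDel^1$ and either of the two preimages if $\ha\in\hDel^2$.

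To check that $\rho(\varpi_\alpha)=\varpi_{\ha}$, observe that $\rho(\varpi_\alpha)\in X(\hT)\subseteq P(\hPhi)$ is determined by the integers $\langle\rho(\varpi_\alpha),\hgam^{\vee}\rangle=\langle\varpi_\alpha,\hgam^{\vee}\rangle$ for $\hgam\in\hDel$, the coroot $\hgam^{\vee}$ being read inside $X_*(\hT)\subseteq X_*(T)$. By \eqref{eq:ops sph rk min type 1} and \eqref{eq:ops sph rk min type 2} this coroot equals $\gamma^{\vee}$ when $\hgam\in\hDel^1$, writing $\bar\rho^{-1}(\hgam)=\{\gamma\}$, and $\gamma_{\ngg}^{\vee}+\gamma_{\pgg}^{\vee}$ when $\hgam\in\hDel^2$, writing $\bar\rho^{-1}(\hgam)=\{\gamma_{\ngg},\gamma_{\pgg}\}$. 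Substituting, and using $\langle\varpi_\alpha,\beta^{\vee}\rangle=\delta_{\alpha,\beta}$ for $\beta\in\Delta$ together with the facts that $\bar\rho$ restricts to a bijection between $\Delta^1$ and $\hDel^1$ and that $\bar\rho^{-1}$ carries distinct elements of $\hDel^2$ to disjoint two-element subsets of $\Delta^2$, a short case check gives $\langle\rho(\varpi_\alpha),\hgam^{\vee}\rangle=\delta_{\ha,\hgam}$ for all $\hgam\in\hDel$. Hence $\rho(\varpi_\alpha)=\varpi_{\ha}$, which yields $P(\hPhi)\subseteq X(\hT)$ and therefore $X(\hT)=P(\hPhi)$. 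The one genuinely delicate point is the semisimplicity step: a reductive subgroup of a semisimple group need not be semisimple (a maximal torus shows this), so the minimal rank hypothesis must be used, and the argument above does so precisely through the fact that $\hG_y$ contains a maximal torus of $\hG$, which forces the connected centre of $\hG$ to act trivially on the flag variety; the simple connectedness part is then routine once the pinning relations \eqref{eq:ops sph rk min type 1}--\eqref{eq:ops sph rk min type 2} are available.
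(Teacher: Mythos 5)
Your proof is correct. The simple-connectedness step is essentially the same lattice computation the paper makes: $G$ simply connected gives $\varpi_\beta\in X(T)$, the compatible-pinning formulas \eqref{eq:ops sph rk min type 1}--\eqref{eq:ops sph rk min type 2} yield $\langle\rho(\varpi_\beta),\ha^\vee\rangle=\delta_{\bar\rho(\beta),\ha}$, and surjectivity of $\bar\rho\colon\Delta\to\hDel$ then puts every $\varpi_{\ha}$ in $X(\hT)$. Where you genuinely diverge is the semisimplicity step. The paper simply cites \cite{spherangmin}[Proposition 3.2] to get that $\hG/(Z(G)\cap\hG)$ is semisimple and then uses finiteness of $Z(G)\cap\hG$. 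You instead give a direct geometric argument: since $\hG_y$ contains a maximal torus of $\hG$, the connected centre $Z(\hG)^\circ$ lies in $\hG_y$; being central it fixes the dense orbit $\hG\cdot y$ pointwise, hence, fixed loci being closed, acts trivially on the whole flag variety, so it lies in $\bigcap_g gBg^{-1}$, whose identity component sits in $R(G)=\{1\}$, giving $Z(\hG)^\circ=1$. This is a pleasant, self-contained alternative: it avoids the black-box citation and makes it transparent exactly where the minimal-rank hypothesis (the torus inside $\hG_y$) enters, at the cost of a slightly longer argument than the one-line reduction the paper uses.
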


\begin{proof}
 If $G$ is semisimple, then $\hG/ (Z(G) \cap \hG)$ is semisimple because of \cite{spherangmin}[Proposition 3.2]. Hence $\hG$ is semisimple since $Z(G) \cap \hG$ is finite. 
 
 Next suppose that $G$ is semisimple and simply connected.
    Let $\beta \in \Delta$ and $\halpha \in \hDel$. If $G$ is simply connected, then $\pib \in X(T)$. By Formulas \eqref{eq:ops sph rk min type 1} and \eqref{eq:ops sph rk min type 2} we have that 

      $$ \langle \rho(\pib) , \halpha^\vee \rangle = \delta_{\rho(\beta), \halpha}. $$
 Hence $\rho(\pib)=\varpi_{\rho(\beta)}$. Since $\rho : \Delta \longto \hDel$ is surjective, we deduce that for any $\halpha \in \Delta$,  $\varpi_\ha \in X(\hT)$. This implies that $\hG$ is simply connected.
\end{proof}

\begin{coro}[of the proof] \label{cor:rho pia sphr} For any $\alpha \in \Delta$, we have $\rho(\pia)=\varpi_{\rho(\alpha)}.$
    \end{coro}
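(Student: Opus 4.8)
The plan is short, since the required equality already surfaces inside the proof of Lemma~\ref{lem:G simply connected, Gh simply connected} --- where one shows $\rho(\pib)=\varpi_{\rho(\beta)}$ for $\beta\in\Delta$ --- so it only remains to isolate that computation and present it on its own. As $G$ is simply connected, $\pia\in X(T)$ for every $\alpha\in\Delta$. By Lemma~\ref{lem:G simply connected, Gh simply connected}, $\hG$ is simply connected too, so the simple coroots $\hbeta^\vee$, $\hbeta\in\hDel$, span the cocharacter lattice of $\hT$; thus a character of $\hT$ is determined by its values on these coroots, and $\varpi_{\rho(\alpha)}\in X(\hT)$ is precisely the character with $\langle\varpi_{\rho(\alpha)},\hbeta^\vee\rangle=\delta_{\rho(\alpha),\hbeta}$ for all such $\hbeta$. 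Hence the whole task reduces to checking that $\rho(\pia)$ satisfies these same relations.

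To carry this out, I would first note that the one-parameter subgroup $\hbeta^\vee\colon\CC^*\longto\hT$ takes values in $T$, whence $\langle\rho(\pia),\hbeta^\vee\rangle$ equals the pairing $\langle\pia,\hbeta^\vee\rangle$ computed inside $G$. Then I would rewrite $\hbeta^\vee$ through coroots of $G$ by means of the compatible pinning: Formula~\eqref{eq:ops sph rk min type 1} gives $\hbeta^\vee=\beta^\vee$ when $\hbeta\in\hDel^1$ and $\bar\rho\inv(\hbeta)=\{\beta\}$, while Formula~\eqref{eq:ops sph rk min type 2} gives $\hbeta^\vee(t)=\beta_\ngg^\vee(t)\,\beta_\pgg^\vee(t)$ when $\hbeta\in\hDel^2$ and $\bar\rho\inv(\hbeta)=\{\beta_\pgg,\beta_\ngg\}$. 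In either case $\langle\pia,\hbeta^\vee\rangle=\sum_{\gamma\in\bar\rho\inv(\hbeta)}\langle\pia,\gamma^\vee\rangle=\sum_{\gamma\in\bar\rho\inv(\hbeta)}\delta_{\alpha,\gamma}$, where I use that the roots of $\bar\rho\inv(\hbeta)$ all lie in $\Delta$ (since $\bar\rho\inv(\hDel)=\Delta$) together with the defining property of fundamental weights. This last sum equals $1$ exactly when $\alpha\in\bar\rho\inv(\hbeta)$, that is when $\rho(\alpha)=\hbeta$, and $0$ otherwise; in other words it equals $\delta_{\rho(\alpha),\hbeta}$, which is what we needed.

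The only point that calls for a moment's care is the type-$2$ bookkeeping: the two roots $\beta_\pgg,\beta_\ngg$ making up $\bar\rho\inv(\hbeta)$ must be distinct simple roots, so that $\delta_{\alpha,\beta_\pgg}+\delta_{\alpha,\beta_\ngg}$ never reaches $2$. Distinctness is part of the definition of $\hPhi^2$ (the $\bar\rho$-fibre over such a root has cardinality $2$), and simplicity follows from $\bar\rho\inv(\hDel)=\Delta$. Apart from this, the argument uses nothing beyond the pinning normalisations~\eqref{eq:ops sph rk min type 1}--\eqref{eq:ops sph rk min type 2}.
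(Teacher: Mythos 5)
Your argument is correct and is essentially the computation the paper itself performs inside the proof of Lemma \ref{lem:G simply connected, Gh simply connected}: pair $\rho(\pia)$ with each simple coroot of $\hT$, use \eqref{eq:ops sph rk min type 1} and \eqref{eq:ops sph rk min type 2} to write $\hbeta^\vee$ as a product of coroots of $G$ indexed by $\bar\rho\inv(\hbeta)\subseteq\Delta$, and read off the Kronecker delta. The extra care you take (uniqueness of a character with given pairings, distinctness of the two roots in a type-$2$ fibre) is sound and only makes explicit what the paper leaves implicit.
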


\begin{example}
\label{ex: sp sl pt 1} 
We start discussing the running example of the pair$(\SL_{2n}, \Sp_{2n})$.
Fix $n\geq 2$. 
Let $V$ be a $2n$-dimensional vector space 
with fixed basis $\base=(e_1,\dots,e_{2n})$ and $G$ be the special linear group of $V$. Let $T$ (resp $B$) be the maximal torus (resp. Borel subgroup) of $G$ consisting in diagonal (resp. upper triangular)
matrices with respect to this basis. For $ i \in [1,2n]$, let $ \varepsilon_i $ be the character of $T$ corresponding to the diagonal entry in position $i$. We have that $ X(T)=\oplus_{i=1}^{2n}\ZZ \varepsilon_i/(\varepsilon_1+\cdots+\varepsilon_{2n})$ and, with little abuse of notation: 

$$
\begin{array}{l}
  \Phi^+=\{ \varepsilon_i -  \varepsilon_j\,:\,1\leq i<j\leq 2n\}\\
\Delta =\{\alpha_i= \varepsilon_i- \varepsilon_{i+1} \, : \, 1 \leq i \leq 2n-1 \}\\
X(T)^+=\{\sum_{i=1}^{2n}\lambda_i  \varepsilon_i\,:\,
  \lambda_1\geq\cdots\geq \lambda_{2n} = 0\ \}.
\end{array}
$$ 

Consider the following matrices
\begin{eqnarray}
  \label{eq:defJn}
J_n=\left(
  \begin{array}{ccc}
    &&1\\
&\rddots\\
1
  \end{array}
\right);
\qquad{\rm and}\qquad
\omega=\left(
  \begin{array}{cc}
   0 &J_n\\
-J_n&0
  \end{array}
\right).\label{eq:defsympform}
\end{eqnarray}
of size $n\times n$ and $2n\times 2n$ respectively.
We consider $\omega$ as a symplectic bilinear form of $V$.  Let $\hG$ be the associated symplectic group.
Set $\hT=\{\diag(t_1,\dots,t_{n},t_{n}^{-1},\dots,t_1^{-1})\,:\,t_i\in\CC^*\}$.
Let $\widehat B$ be the Borel subgroup of $\hG$ consisting of upper triangular matrices of $\hG$.
For $i\in [1,n]$, let $\widehat \varepsilon_i$ denote the character of $T$ that maps 
$\diag(t_1,\dots,t_{n},t_{n}^{-1},\dots,t_1^{-1})$ to $t_i$; then 
$X(\hT)=\oplus_i\ZZ \wh \varepsilon_i$.
Here
$$
\begin{array}{l}
  \hPhi^+=\{ \wh \varepsilon_i\pm \wh \varepsilon_j\,:\,1\leq i<j\leq n\}\cup 
\{2 \wh \varepsilon_i\,:\,1\leq i\leq n\}, \\
\hDel=\{ \widehat \alpha_1= \wh \varepsilon_1-\wh \varepsilon_2,\,\wh \alpha_2= \wh \varepsilon_2- \wh \varepsilon_3,\dots,\,
\wh \alpha_{n-1}= \wh \varepsilon_{n-1}-\wh \varepsilon_{n},\,\wh \alpha_{n}=2 \wh \varepsilon_{n}\},
  {\rm\ and}\\
X(\hT)^+=\{\sum_{i=1}^{n}\lambda_i \wh \varepsilon_i\,:\,
  \lambda_1\geq\cdots\geq \lambda_{n}\geq 0\}.
\end{array}
$$
For $i\in [1;2n]$, set $\overline{i}=2n+1-i$. The restriction of characters is characterised by 
$$ \rho ( \varepsilon_i) = \wh \varepsilon_i \, \, \text{for} \, \, 1\leq i \leq n \hspace{ 0,3 cm} \text{and} \hspace{0,3 cm} \rho(\varepsilon_i)= - \wh \varepsilon_{ \bar i} \, \, \text{for} \, \, n < i \leq 2n. $$
Notice that $$ \hPhi^1 \cap \hPhi^+ = \{ 2 \wh \varepsilon_i \, : \, 1 \leq i \leq n \}. $$
The Weyl group $\hW$ of $\hG$ is a subgroup of the Weyl group $W=S_{2n}$ of
$G=\SL(V)$. More precisely
$$
\hW=\{\hw \in S_{2n}\,:\,\hw(\overline{i})=\overline{\hw(i)} \ \ \forall i\in [1;2n]\}.
$$
An element $w\in W$ is written as a word $[w(1)\,w(2)\,\dots
w(2n)]$. We set 
$$
z=[1 \, 3 \, \dots \, (2n-1) \, (2n) \, \dots 4 \, 2 ] \quad \text{and} \quad y=z^{-1}=[1\,\bar 1\,2\,\bar 2\dots].
$$
With this choice of $z$, if $\ha= \widehat \varepsilon_i - \widehat \varepsilon_j $ for some $1 \leq i < j \leq n$, then $\alpha_\pgg= \varepsilon_i - \varepsilon_j$ and $\alpha_\ngg= \varepsilon_{\bar j } - \varepsilon_{\bar i}.$ If $\ha = \widehat \varepsilon_i + \widehat \varepsilon_j$, then $\alpha_\pgg= \varepsilon_i - \varepsilon_{\bar j}$ and $\alpha_\ngg=\varepsilon_j - \varepsilon_{\bar i}.$
The Lie algebra $\lg$ of $G$ consists of square matrices of size $2n$ with zero trace. We can easily compute that 

\begin{equation*}
    \hlg= \biggl\{ \bmat A & B \\
 C & D
 \emat \, \, : \, \, JAJ=-D^T, \, \, JCJ=C^T, \, \, J B J=B^T, \, \, \Tr(A)+ \Tr(D)=0 \biggr\}.
\end{equation*}
In the previous formula, $A,B,C,D$ are square matrices of size $n$. Note that, the matrix $JA^TJ$ is obtained by flipping $A$ along the principal atidiagonal. Given $1 \leq i,j \leq 2n$, we denote by $E_{ij}$ the canonical basis element, of the space of square matrices of size $2n$, with a coefficient one in row $i$ and column $j$.

\noindent Consider the root $\hb= \wh \varepsilon_i + \wh \varepsilon_j$, for some $1 \leq i < j \leq n$. Then, a possible $\sl_2$ triple for $\hb$, in $\hg$, consists of 

$$X_{\hb}= E_{i\bar j} + E_{j \bar i} \quad H_{\hb}= E_{i i}+ E_{jj} - E_{\bar j \, \bar j} - E_{\bar i \, \bar i} \quad X_{-\hb}= E_{\bar i j} + E_{\bar j i}.$$
The corresponding compatible $\sl_2$ triple, in $\lg$, are

$$
\begin{array}{ccc}
X_{\beta_\pgg}= E_{i \bar j} & H_{\beta_\pgg}=E_{ii}- E_{\bar j \,  \bar j} & X_{-\beta_\pgg}= E_{\bar j i}\\[0.5 em]
X_{\beta_\ngg}= E_{j  \bar i} & H_{\beta_\ngg}=E_{jj}- E_{\bar i \, \bar i} & X_{-\beta_\ngg}= E_{\bar i j}.
\end{array}
$$
Similarly, let $\hb= \wh \varepsilon_i - \wh \varepsilon_j$. Then, a possible $\sl_2$ triple for $\hb$, in $\hg$, consists of 

$$X_{\hb}= E_{i j} - E_{\bar j \, \bar i} \quad H_{\hb}= E_{i i}+ E_{\bar j \, \bar j} - E_{ j j} - E_{\bar i \, \bar i} \quad X_{-\hb}= E_{ j i} - E_{\bar i \, \bar j}.$$
The corresponding compatible $\sl_2$ triple, in $\lg$, are (note the presence of some signs with respect to the previous case)

$$
\begin{array}{ccc}
X_{\beta_\pgg}= E_{i  j} & H_{\beta_\pgg}=E_{ii}- E_{ j  j} & X_{-\beta_\pgg}= E_{ j i}\\[0.5 em]
X_{\beta_\ngg}= - E_{ \bar j  \, \bar i} & H_{\beta_\ngg}=E_{\bar j \, \bar j}- E_{\bar i \, \bar i} & X_{-\beta_\ngg}= - E_{\bar i \, \bar  j}.
\end{array}
$$

\bigskip

Now, fix $n=2$. We chose $\sl_2$ triples for elements of $\hDel^2$, in $\hg$, as above. Moreover, for $\ha_2= 2 \wh \varepsilon_2$, we set 

$$ X_{\ha_2}= E_{23} \quad H_{\ha_2}= E_{22}-E_{33} \quad X_{-\ha_2}= E_{32}.$$
Then, for $t \in \CC^*$ we have that 

$$
x_{\alpha_1}(t)= \bmat 1 & t & 0 & 0\\
0 & 1 & 0 & 0 \\
0 & 0 & 1 & 0\\
0 & 0 & 0 & 1
\emat
\quad
x_{\alpha_2}(t)= \bmat 1 & 0 & 0 & 0\\
0 & 1 & t & 0 \\
0 & 0 & 1 & 0\\
0 & 0 & 0 & 1
\emat
\quad
x_{\alpha_3}(t)= \bmat 1 & 0 & 0 & 0\\
0 & 1 & 0 & 0 \\
0 & 0 & 1 & -t\\
0 & 0 & 0 & 1
\emat,$$
\end{example}
\noindent where, with usual notation, we denote $\alpha_i:= \varepsilon_i - \varepsilon_{i+1}.$

\section{Generalised spherical minors}

\noindent
We introduce some notation. We denote by $w_0 \in W$ the longest element of the Weyl group $W$. For $\lambda \in X(T)$, we set $\lambda^*:= - w_0 \lambda$. If $V$ is a representation of $G$, then the dual space $V^*$ of $V$ is considered as a $G$-module as follows:
$$
(g \cdot \varphi)(v):= \varphi( g^{-1} v) \qquad g \in G, \,\, \varphi \in V^*, \, \, v \in V.
$$
With this notation,  the representation $V(\lambda)^*$ is isomorphic to $V(\lambda^*)$ for any dominant character $\lambda \in X(T)^+$.

\bigskip

\noindent Recall the definition of the monoid of spherical weights $\Lambda_{G/\hG}^+$ given in the introduction. Note that, since $\hG$ is reductive, then 

\begin{equation}
\Lambda_{G/\hG}^+=  \{ \lambda \in X(T)^+ \, : \, V(\lambda^*)^\hG \neq 0 \}
\end{equation}
Moreover, as a consequence of \cite{francone2023multiplicity}[Theorem 1], we have that 

 \begin{equation}
     \label{eq: expression for spherical weights}
 \Lambda_{G/\hG}^+= \{\lambda \in X(T)^+ \, : \, \rho( y \lambda)=0 \}.
 \end{equation}
 Formula \eqref{eq: expression for spherical weights} gives an explicit way to compute the monoid $\Lambda_{G/\hG}^+$.
 Recall that $\mathcal{B}$ denotes the unique set of free generators of the monoid $\Lambda_{G/\hG}^+$.
 Again Formula \eqref{eq: expression for spherical weights}  allows to deduce that, for the fundamental pairs of Theorem \ref{theo:list sph rank min}, the set $\mathcal{B}$ is given as follows. 

\begin{equation}
    \label{eq:sph base of weights}
\begin{array}{l c l }
       1. \quad G= \hG & & \Bc=\emptyset. \\[0.5em]
    2. \quad  G= \hG \times \hG & & \Bc= \{ (\varpi_\ha, \piha^*) \, : \, \ha \in \hDel\}. \\[0.5em]
   3. \quad  (\SL_{2n}, \Sp_{2n}) & & \Bc= \{ \varpi_{2k} \, : \, 1 \leq k < 2n\}.\\[0.5em]
   4. \quad (\Spin_{2n} , \Spin_{2n-1}) & & \Bc= \{ \varpi_1\}.\\[0.5em]
   5. \quad (E_6, F_4) & & \Bc= \{ \varpi_1, \varpi_6 \}.\\[0.5em]
   6. \quad (B_3, G_2) & & \Bc= \{\varpi_3\}.
 \end{array}
 \end{equation}

\begin{remark}
    In \cite{bravi2023multiplication} the monoid of spherical weights is explicitly described for more general spherical subgroups than the ones considered here. The expressions given in \eqref{eq:sph base of weights} can also be found in \cite{bravi2023multiplication}.
\end{remark}

For any spherical weight $\lambda$, the space $V(\lambda)^\hG$ is one dimensional, $\hG$ being spherical in $G$. Let $v_\lambda$ be a generator of $V(\lambda)^{\hG}$. As a consequence of the well known Peter-Weyl theorem, we have an isomorphism of $G$-modules 

\begin{equation}
    \label{eq:spherical peter weyl}
    \begin{array}{r c l}
     \bigoplus_{\lambda \in \Lambda_{G/\hG}^+} V(\lambda)^* & \longto & \CC[G/\hG] \\
      (\psi_\lambda)_\lambda & \longmapsto & \bigl(g \longmapsto \sum_{\lambda} \psi_\lambda(g v_\lambda) \bigr).
    \end{array}
\end{equation}

\noindent For any $\lambda \in \Lambda_{G/\hG}^+$, let $\varphi_\lambda \in V(\lambda)^*$ be the unique $(y U y\inv)$-invariant vector of $V(\lambda)^*$ satisfying $\varphi_\lambda(v_\lambda)=1$. The fact that \eqref{eq:spherical peter weyl} is an isomorphism and $y B y\inv \hG$ is open in $G$ guarantee the existence and unicity of $\varphi_\lambda$. Note that $\varphi_\lambda$ is a weight vector. Its weight is $y \lambda^*=-yw_0\lambda.$

\bigskip

Following \cite{fomin1999double} (see \cite{fomin1999double}[Section 1.4, Eq. (1.8)]), for $\alpha \in \Delta$ we define
\begin{equation}
    \label{s bar}
\sba:= x_\alpha(-1)x_{-\alpha}(1)x_\alpha(-1).
   \end{equation}

\noindent Since the family $\{ \sba : \alpha \in \Delta \}$ satisfies the braid relations, if $\ii=(i_l, \dots , i_i)$ is a reduced expression for some $w \in W$, then the element 
$$
\overline{w}:= \overline{s}_{i_l} \dots \overline{s}_{i_1}
$$
is well defined.

\begin{definition}[Generalised spherical minors]
\label{def:sph minors} Let $w \in W$ and $\lambda \in \Lambda^+_{G/\hG}$. The \textit{generalised spherical minor} $\Delta^\lambda_w \in \CC[G]^\hG$ is the function defined by $$\Delta^\lambda_w(g)= \varphi_\lambda\bigl((\widetilde{w})^{-1} g v_\lambda\bigr), \qquad  (g \in G)$$
where
\begin{equation}
    \label{eq: w tilde spherical}
    \widetilde w= \overline{y}\cdot   \overline{ y \inv  w y} \cdot \overline{ y}^{-1}. 
\end{equation}
\end{definition}

 The following characterisation of the generalised spherical minors is obvious.

\begin{lemma}
    \label{lem:characterisation generalised spherical minors} For any $w \in W$ and $\lambda \in \Lambda^+_{G/\hG}$, the function $\delamw$ is the only element of $\CC[G]$ satisfying the following properties.
    \begin{enumerate}
        \item $\delamw$ is $\hG$ right invariant.
        \item $\delamw$ is $(wyUy \inv w\inv)$ left invariant.
        \item For any $t \in T$ and $g \in G$, we have $\delamw(tg)=(-wy\lambda^*)(t) \delamw(g)$.
        \item $\delamw( \widetilde w)=1$.
    \end{enumerate}
\end{lemma}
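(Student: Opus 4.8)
The plan is to verify that the function $\delamw$ defined by $\delamw(g)=\varphi_\lambda\bigl((\widetilde w)^{-1}gv_\lambda\bigr)$ satisfies the four listed properties, and then argue that these properties pin it down uniquely. For existence, all four properties are immediate from the definitions: property (4) is just $\delamw(\widetilde w)=\varphi_\lambda(v_\lambda)=1$; property (1) follows because $v_\lambda\in V(\lambda)^{\hG}$, so $gv_\lambda=gh^{-1}hv_\lambda$ wait — more carefully, $\delamw(gh)=\varphi_\lambda((\widetilde w)^{-1}ghv_\lambda)=\varphi_\lambda((\widetilde w)^{-1}gv_\lambda)$ for $h\in\hG$; property (3) is the statement that $\varphi_\lambda$ is a weight vector of weight $y\lambda^*$ (recorded just before the definition), so $\delamw(tg)=\varphi_\lambda((\widetilde w)^{-1}tgv_\lambda)=\varphi_\lambda\bigl(t'(\widetilde w)^{-1}gv_\lambda\bigr)$ where $t'=(\widetilde w y)\inv t(\widetilde w y)$... actually the cleanest route is: $(\widetilde w)^{-1}t=\bigl((\widetilde w)^{-1}t\widetilde w\bigr)(\widetilde w)^{-1}$ and $(\widetilde w)^{-1}t\widetilde w\in T$ acts on the weight-$y\lambda^*$ vector $\varphi_\lambda$ by the scalar $(y\lambda^*)\bigl((\widetilde w)^{-1}t\widetilde w\bigr)$; since $\widetilde w$ represents $wy$ in the appropriate sense (by \eqref{eq: w tilde spherical}, $\widetilde w\in N(T)$ projecting to $wy\cdot y^{-1}=w$... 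I must be careful here), one identifies this scalar with $(-wy\lambda^*)(t)$ after checking the normalisation $\varphi_\lambda(v_\lambda^+)$-type sign. Property (2): for $u\in wyUy\inv w\inv$, write $u=\widetilde w\cdot(yuy\inv\text{-conjugate})\cdot(\widetilde w)^{-1}$, wait — rather, $(\widetilde w)^{-1}u\widetilde w\in yUy\inv$ when $\widetilde w$ represents $w$ acting after conjugation by $\bar y$; then $\varphi_\lambda$ being $(yUy\inv)$-invariant kills it: $\delamw(ug)=\varphi_\lambda((\widetilde w)^{-1}ug v_\lambda)=\varphi_\lambda\bigl(((\widetilde w)^{-1}u\widetilde w)(\widetilde w)^{-1}gv_\lambda\bigr)=\varphi_\lambda((\widetilde w)^{-1}gv_\lambda)$.

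For uniqueness, the key input is that $wyBy\inv w\inv\hG$ is open and dense in $G$ (because $yBy\inv\hG=z\inv B z\hG$ is open by definition of $z\in Z_m\subseteq Z$, and left translation by $\overline w$ — or rather the relevant representative — is an isomorphism; but one should instead use that $Bw\hG$-type orbits... here the cleanest is to note $\overline{w}\,yBy\inv\hG$ is open since it is a translate of the open set $yBy\inv\hG$). On this open set, any function $f$ satisfying (1), (2), (3) is determined: write a generic point as $g=u\cdot t\cdot\widetilde w\cdot h$ with $u\in wyUy\inv w\inv$, $t\in T$, $h\in\hG$ — this decomposition exists on a dense open subset by the openness of the relevant double coset together with a Bruhat-type argument — and then $f(g)=f(ut\widetilde w h)=(\text{character of }t)\cdot f(\widetilde w)=(\text{character})\cdot 1$ by (4), using (1), (2), (3) in turn. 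Hence $f$ agrees with $\delamw$ on a dense open set, and since both are regular functions on $G$ (or on the relevant closure), they coincide everywhere.

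The main obstacle, and the only genuinely non-formal point, is bookkeeping the precise relationship between $\widetilde w$ (as defined in \eqref{eq: w tilde spherical} via the $\overline{\phantom{w}}$-lifts) and the Weyl-group element it represents, together with tracking signs. One must confirm that $\widetilde w\in N_G(T)$, that its image in $W$ is $w$ (not $wy$ or $y\inv w y$ — the conjugation by $\overline y$ on both sides is designed exactly so that the $\bar y$'s cancel at the level of $W$ but the lift remembers the $y$-twist), and that the twisted torus action in (3) produces the stated character $-wy\lambda^*$ with the correct sign. This hinges on the standard facts about the Fomin–Zelevinsky lifts $\sba$ — namely $\sba^{-1}=\overline{s}_\alpha^{\,\prime}$-type identities and $\overline w\,t\,\overline w^{-1}=w(t)$ — applied to $\varphi_\lambda$, whose weight is recorded just above Definition \ref{def:sph minors} as $y\lambda^*=-yw_0\lambda$. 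I expect this to be a short computation once the conventions are fixed, and the rest of the proof is, as the paper says, "obvious": properties (1)–(4) hold essentially by construction and uniqueness follows from density of the open orbit.
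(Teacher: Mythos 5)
Your argument is correct and is precisely the standard unpacking of what the paper dismisses as "obvious": existence of the four properties follows directly from the $\hG$-invariance of $v_\lambda$, the $(yUy\inv)$-invariance and weight $y\lambda^*$ of $\varphi_\lambda$, the fact that $\widetilde w\in N_G(T)$ represents $w$, and the normalisation $\varphi_\lambda(v_\lambda)=1$; uniqueness follows from the decomposition $\widetilde w\,yBy\inv\hG=(wyUy\inv w\inv)\,T\,\widetilde w\,\hG$ of a dense open subset of the irreducible variety $G$. The sign bookkeeping you flag does close up as expected, since $\varphi_\lambda(sx)=(-y\lambda^*)(s)\varphi_\lambda(x)$ for $s\in T$ under the paper's dual-action convention, giving $(-y\lambda^*)(w\inv(t))=(-wy\lambda^*)(t)$.
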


 The generalised spherical minors are naturally well behaved with respect to the set of simple roots $y \Delta$. To properly formalise this fact, we introduce some notation. For  $\alpha \in \Delta$, we consider the $\sl_2$-triple $(Z_{y \alpha}, H_{y \alpha}, Z_{- y \alpha})$ for the simple root $y \alpha$  defined by 

\begin{equation}
    \label{eq:sl2 z inverse alpha}
    Z_{y \alpha}:= \overline{y} \cdot X_\alpha, \quad  H_{y \alpha}:=\overline{y}  \cdot H_\alpha, \quad  Z_{-y \alpha}:= \overline{y} \cdot X_{-\alpha}.
\end{equation}

\noindent Then, we denote 

\begin{equation}
\label{eq:z y alpha}
    z_{\pm y  \alpha}(t):= \exp( t Z_{\pm y \alpha})=  \overline{y}  x_{\pm \alpha}(t) \overline{y}\inv.
\end{equation}
Note that 
$$ 
\widetilde{s_{y \alpha}} = z_{y \alpha}(-1)z_{-y \alpha}(1)z_{y \alpha}(-1).
$$
Moreover, if $v,w \in W$ and $\ell(y \inv vwy)=\ell(y \inv v y) + \ell(y \inv w y) $, then $\widetilde{vw}= \widetilde v \widetilde w$ and therefore 
$$
\Delta^\lambda_{vw}= \widetilde v \cdot \Delta^\lambda_w. 
$$
Note also that $\ell(y \inv v y )$ is the length of the Weyl group element $v$ with respect to the set of simple reflections $s_{\delta}$, with $\delta \in y \Delta$. 

If $\delta \in y \Delta$ and $t \in \CC^*$, then the following identities hold. See for example \cite{fomin1999double}[Eq. (2.13)].
 
\begin{equation}
    \label{z alpha s alpha = ..}
    \begin{array}{l}
        z_{\delta}(t) \cdot  \widetilde{s_{\delta}}  = z_{- \delta}(t\inv) \cdot   \delta^\vee(t) \cdot z_{\delta}(-t\inv)\\[0.5em]
        ( \widetilde{s_{\delta}})^{-1} \cdot  z_{-\delta}(t)= z_{-\delta}(-t\inv) \cdot \delta^\vee(t) \cdot z_{\delta}(t\inv).
    \end{array}
\end{equation}

\begin{example}
    \label{ex: sp sl pt 5} We continue Example \ref{ex: sp sl pt 1}. For a positive integer $k$ and a set $S$, we denote by $\mathcal{P}_k\bigl(S \bigr)$ the set of subsets of $S$ of cardinality $k$. Fix an integer $k$ satisfying $1 \leq k < n.$  Then 
    $$V(\varpi_{2k})= \bigwedge^{2k} V. $$
    If $I \in \mathcal{P}_k\bigl([n]\bigr)$, we use the convention that 
    $$
    I= \{i_1 < i_2 < \dots < i_k \}, \quad \overline{I}= \{  \overline{i_k} < \dots < \overline{i_2} < \overline{i_1}\} \quad \text{and} \quad e_I=e_{i_1}  \wedge \cdots \wedge e_{i_k}.
    $$ 
    Then
    \begin{equation}
    \label{eq: ex sp sl 1}
        V(\varpi_{2k})^ \hG= \CC  \biggl( \sum_{I \in \mathcal{P}_k\bigl([n]\bigr)}e_I \wedge e_{\overline{I}} \biggr),
    \end{equation}
    where $[n]$ denotes the set $\{1,2, \dots, n\}.$
     Given two subsets $R, C$ of the set $[2n]$ of the same cardinality, we denote by $D_{R,C} \in \CC[G]$ the determinant of the submatrix whose rows (resp. columns) are indexed by $R$ (resp. $C$). 
     If 
     $$R_k= \{ n-k+1, n-k+2, \dots, n , \overline{n}, \dots , \overline{n-k+2}, \overline{n-k+1} \},$$
     using \eqref{eq: ex sp sl 1}, we easily compute that 
\begin{equation}
    \label{eq: delta pi2k sp sl}
    \Delta^{\varpi_{2k}}_e= \sum_{I \in \mathcal{P}_k\bigl([n]\bigr)} D_{R_k, I \cup \overline{I}}.
\end{equation}
Hence, the generalised spherical minor $\Delta^{\varpi_{2k}}_e$ is the sum of the determinants of the submatrices of size $2k$ whose rows are the $2k$ central ones and whose columns are invariant under the application $i \longmapsto \bar i.$ Similarly, if $w \in W=S_{2n}$, then  
\begin{equation}
    \label{eq: delta pi2k w sp sl}
    \Delta^{\varpi_{2k}}_w=  \pm \sum_{I \in \mathcal{P}_k\bigl([n]\bigr)} D_{ w(R_k), I \cup \overline{I}}.
\end{equation}
The sign in \eqref{eq: delta pi2k w sp sl} depends on $\widetilde w$.

Assume now that $n=2$. 
Moreover, let $k=1$ and 
$$g= \bmat x_{11} & x_{12} & x_{13} & x_{14} \\
x_{21} & x_{22} & x_{23} & x_{24} \\
x_{31} & x_{32} & x_{33} & x_{34} \\
x_{41} & x_{42} & x_{43} & x_{44}
\emat \in G.$$
Then the following expression holds.
$$
\begin{array}{rl}
   \Delta^{\varpi_2}_e(g) = & D_{\{2,3\},\{1,4\}}(g) + D_{\{2,3\},\{2,3\}}(g)\\[0.5em]
   = & (x_{21}x_{34}- x_{24}x_{31}) + (x_{22}x_{33} - x_{23}x_{32}).
\end{array}
$$
In this case, we can compute that
    $$
\overline{s_{\alpha_1}}= \bmat 0 & -1 & 0 & 0\\
1 & 0 & 0 & 0 \\
0 & 0 & 1 & 0\\
0 & 0 & 0 & 1
\emat
\quad
\overline{s_{\alpha_2}}= \bmat 1 & 0 & 0 & 0\\
0 & 0 & -1 & 0 \\
0 & 1 & 0 & 0\\
0 & 0 & 0 & 1
\emat
\quad
\overline{s_{\alpha_3}}= \bmat 1 & 0 & 0 & 0\\
0 & 1 & 0 & 0 \\
0 & 0 & 0 & 1\\
0 & 0 & -1 & 0
\emat.
$$
Moreover, since $y=s_{\alpha_3}s_{\alpha_2}$ and $\ell(y)=2$, we deduce that 
$$\overline{y}= \bmat
1 & 0 &0 &0\\
0& 0& -1& 0\\
0& 0& 0& 1\\
0& -1& 0& 0
\emat
\quad \overline{y}^{-1}=  \bmat
1 & 0 &0 &0\\
0& 0& 0& -1\\
0& -1& 0& 0\\
0& 0& 1& 0
\emat . $$  
Now, for $i \in \{1,2,3\}$, let $\delta_i= y \alpha_i$. By direct computation, we have that 
   $$
\widetilde{s_{\delta_1}}= \bmat 0 & 0 & 0 & 1\\
0 & 1 & 0 & 0 \\
0 & 0 & 1 & 0\\
-1 & 0 & 0 & 0
\emat
\quad
\widetilde{s_{\delta_2}}= \bmat 1 & 0 & 0 & 0\\
0 & 0 & 0 & 1 \\
0 & 0 & 1 & 0\\
0 & -1 & 0 & 0
\emat
\quad
\widetilde{s_{\delta_3}}= \bmat 1 & 0 & 0 & 0\\
0 & 0 & -1 & 0 \\
0 & 1 & 0 & 0\\
0 & 0 & 0 & 1
\emat.
$$ 
\end{example}

\subsection{Basic properties of generalised spherical minors}
\label{sec:basic pros generalised spherical minors}

Recall that, $G$ being semisimple and simply connected, $\hG$ is too because of Lemma \ref{lem:G simply connected, Gh simply connected}. Then, by \cite{popov1994invariant}[Theorem 3.17] we  have that $\CC[G/\hG]=\CC[G]^\hG$ is a factorial algebra of finite type. 

\begin{lemma}
\label{lem:irred generalised spherical minors}
    Let $\lambda, \mu \in \sphweight$ and $ w \in W$. The following holds.
    \begin{enumerate}
        \item $\Delta^{\lambda + \mu}_w= \delamw \Delta^\mu_w.$
        \item If $ \lambda \in \Bc$, then $\delamw$ is irreducible.
        \item If $\lambda, \mu \in \Bc$ and $\lambda \neq \mu$, the ideals generated by $\delamw$ and $\Delta^\mu_w$, in $\CC[G]^\hG$, are different.
    \end{enumerate}
\end{lemma}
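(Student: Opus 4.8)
The plan is to prove the three statements in order, the first by reduction to highest weight vectors, the second via a factoriality/weight-multiplicity argument, and the third by comparing $T$-weights.

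For statement (1), I would use the explicit description of the generators $v_\lambda\in V(\lambda)^{\hG}$ and $\varphi_\lambda\in V(\lambda)^*$. The point is that $V(\lambda+\mu)$ is generated, as a $G$-module, by $v_\lambda\otimes v_\mu$ inside $V(\lambda)\otimes V(\mu)$ (Cartan multiplication), and under this embedding the $\hG$-invariant line is spanned by $v_{\lambda+\mu}=v_\lambda\otimes v_\mu$ (the tensor of two $\hG$-invariants is $\hG$-invariant and lies in the Cartan component, since that component contains the extremal weight vectors). Dually, $\varphi_{\lambda+\mu}$ restricts to $\varphi_\lambda\otimes\varphi_\mu$ on the Cartan component: indeed $\varphi_\lambda\otimes\varphi_\mu$ is $(yUy^{-1})$-invariant of the correct weight and takes value $1$ on $v_\lambda\otimes v_\mu$, so by the uniqueness in Lemma \ref{lem:characterisation generalised spherical minors} (applied to the relevant highest-weight data) it must be $\varphi_{\lambda+\mu}$ restricted there. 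Since $\widetilde w$ acts diagonally on the tensor product, evaluating at $g$ gives
$$\Delta^{\lambda+\mu}_w(g)=\varphi_{\lambda+\mu}\bigl((\widetilde w)^{-1}g\,v_{\lambda+\mu}\bigr)=\varphi_\lambda\bigl((\widetilde w)^{-1}g\,v_\lambda\bigr)\varphi_\mu\bigl((\widetilde w)^{-1}g\,v_\mu\bigr)=\Delta^\lambda_w(g)\Delta^\mu_w(g),$$
which is (1). Alternatively, and perhaps more cleanly, one invokes Lemma \ref{lem:characterisation generalised spherical minors} directly: $\delamw\Delta^\mu_w$ is $\hG$-right-invariant, $(wyUy^{-1}w^{-1})$-left-invariant, a $T$-eigenfunction of weight $-wy(\lambda+\mu)^*$, and takes value $1$ at $\widetilde w$; hence it equals $\Delta^{\lambda+\mu}_w$.

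For statement (2), fix $b\in\Bc$ and $w\in W$. Since $\CC[G]^{\hG}$ is a factorial domain of finite type (as recalled just before the lemma, using that $\hG$ is semisimple simply connected), it suffices to rule out a nontrivial factorisation. First reduce to $w=e$: by the remark after Definition \ref{def:sph minors}, if $\ell(y^{-1}w y)=\ell(y^{-1}y)+\ell(y^{-1}wy)$ — more usefully, by writing $w$ suitably — translation by $\widetilde w$ is an algebra automorphism of $\CC[G]$ carrying $\Delta^b_e$ to $\Delta^b_w$ (when the relevant length-additivity holds), and in general one can still argue that the left-translation action of $G$ on $\CC[G]$ is by algebra automorphisms, so $\Delta^b_w=\widetilde{w}\cdot\Delta^b_e$ up to the $G$-action does not affect irreducibility; more carefully, $g\mapsto \Delta^b_w(g)$ and $g\mapsto\Delta^b_e((\widetilde w)^{-1}g)$ differ by the automorphism $f\mapsto (\widetilde w\cdot f)$, hence $\Delta^b_w$ is irreducible iff $\Delta^b_e$ is. So assume $w=e$. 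Now suppose $\Delta^b_e=f_1f_2$ in $\CC[G]^{\hG}$ with $f_i$ non-units. Both $f_i$ are $T$-semiinvariant (since $\Delta^b_e$ is a $T$-eigenvector and $\CC[G]^\hG$ is a domain, each irreducible factor, hence each $f_i$, is $T$-semiinvariant), of weights $\mu_1,\mu_2$ with $\mu_1+\mu_2=-yb^*$. Using the Peter–Weyl decomposition \eqref{eq:spherical peter weyl}, $\Delta^b_e$ lies in the image of $V(b)^*$, which is $G$-irreducible; the extremal $T$-weights occurring in $V(b)^*$ are $W$-conjugate to $-b^*$, and the image of $V(b)^*$ in $\CC[G/\hG]$ is spanned by the matrix coefficients $g\mapsto\psi(g v_b)$. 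The key point is that $-yb^*$ is an extremal weight of $V(b)^*$, so it is not a nontrivial sum of two other weights that can both occur as weights of $T$-semiinvariants in $\CC[G/\hG]$ — more precisely, I would show that any $T$-semiinvariant dividing $\Delta^b_e$ must have weight lying in $\NN$-span of the $W$-translates occurring, and the minimality of $b$ as a generator of $\Lambda^+_{G/\hG}$ forces one factor to be a unit. This last step is where I expect the real work: translating "divides $\Delta^b_e$" into a statement about weights and then using that $b$ is a free generator of $\Lambda^+_{G/\hG}$ to conclude. One clean route: show that if $f\mid\Delta^b_e$ in $\CC[G/\hG]$ then the $T$-weight of $f$ is of the form $-y\lambda^*$ with $\lambda\in\Lambda^+_{G/\hG}$ and $\lambda\le b$; since $b$ is a free generator this gives $\lambda=0$ or $\lambda=b$, i.e. $f$ is a unit or an associate of $\Delta^b_e$. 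The inclusion $\lambda\le b$ should follow from analysing the Peter–Weyl image: a factor of a matrix coefficient of $V(b)^*$ pushes its $B^-$-highest weight below that of $\Delta^b_e$.

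For statement (3), suppose $\lambda,\mu\in\Bc$ with $\lambda\ne\mu$ and that $(\delamw)=(\Delta^\mu_w)$ in $\CC[G/\hG]$. By (2) both are irreducible, so equality of ideals means $\delamw=c\,\Delta^\mu_w$ for some $c\in\CC^*$. But $\delamw$ is a $T$-eigenvector of weight $-wy\lambda^*$ and $\Delta^\mu_w$ of weight $-wy\mu^*$, and these are equal only if $\lambda^*=\mu^*$, i.e. $\lambda=\mu$ (applying $-w_0$), contradicting $\lambda\ne\mu$; note the weights are distinct because $w$, $y$ and $*=-w_0$ are all bijections on $X(T)$. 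Hence the ideals are different. The main obstacle in the whole lemma is unquestionably the weight-bound step inside the proof of (2); statements (1) and (3) are essentially formal consequences of Lemma \ref{lem:characterisation generalised spherical minors} together with factoriality.
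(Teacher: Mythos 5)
Your treatments of parts (1) and (3) match the paper: for (1) the "cleaner" alternative you mention (apply Lemma \ref{lem:characterisation generalised spherical minors} directly) is exactly the paper's proof, and for (3) the paper also reduces to $\delamw = c\,\Delta^\mu_w$ via factoriality plus $\CC[G/\hG]^* = \CC^*$ and then compares weights via Lemma \ref{lem:characterisation generalised spherical minors}(3).

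For (2), however, there is a real gap. You correctly note that each irreducible factor $f_i$ of $\Delta^b_e$ is a $T$-semiinvariant, but then you try to conclude by a pure $T$-weight count, and you yourself flag that you do not know how to show "the $T$-weight of $f$ is of the form $-y\lambda^*$ with $\lambda\in\Lambda^+_{G/\hG}$ and $\lambda\le b$". This is exactly where the argument breaks: a $T$-semiinvariant in $\CC[G/\hG]$ can have any weight occurring in any $V(\lambda)^*$ with $\lambda$ spherical, not only the "extremal" weights $-y\lambda^*$. The missing ingredient is $yUy^{-1}$-invariance of the factors. Since $yUy^{-1}$ is connected and has no nontrivial characters, and $\Delta^b_e$ is $yUy^{-1}$-invariant, every irreducible factor of $\Delta^b_e$ in the factorial domain $\CC[G/\hG]$ is a $yUy^{-1}$-semiinvariant, hence $yUy^{-1}$-\emph{invariant}. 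Once the factors are known to live in $\CC[G/\hG]^{yUy^{-1}}$, the paper's route is immediate: by Peter--Weyl \eqref{eq:spherical peter weyl} and part (1), the map $\Delta^\nu_e \mapsto \nu$ gives an algebra isomorphism $\CC[G/\hG]^{yUy^{-1}} \cong \CC[\sphweight]$, a polynomial ring in the variables $\Bc$, and every $T$-eigenvector of $\CC[G/\hG]^{yUy^{-1}}$ is a scalar multiple of some $\Delta^\lambda_e$. So $\Delta^b_e = f_1 f_2$ forces $b = \lambda_1 + \lambda_2$ in the free monoid $\sphweight$, hence one $\lambda_i = 0$. Note also that "$\lambda\le b$" in the dominance order is not quite the right condition; what is needed is that $\lambda$ divides $b$ in the monoid $\sphweight$, and only then does the freeness of $\Bc$ finish. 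The paper packages all of this via the cited references (\cite{popov1994invariant}[Theorems 3.1, 3.17] or \cite{francone2023minimal}[Lemma 5.2.4]), which is the conceptual step your proposal is missing.
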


\begin{proof}
   The first statement obviously follows from Lemma \ref{lem:characterisation generalised spherical minors}. For the second assertion, it is sufficient to assume $w=e$. Let $\CC\bigl[\sphweight\bigr]$ be the monoid-algebra of $\sphweight$.
   Because of \eqref{eq:spherical peter weyl} and the first statement of Lemma \ref{lem:irred generalised spherical minors}, the map 
   $ \CC\bigl[G/\hG\bigr]^{yUy\inv} \longto \CC\bigl[\sphweight\bigr] $ defined by $\Delta^\nu_e \longmapsto \nu$ is an isomorphism of algebras.
   Since $\sphweight$ is freely generated by $\Bc$, then $\CC\bigl[\sphweight\bigr]$ is a polynomial ring in the variables corresponding to $\Bc$. 
   In particular, the element $b \in \CC\bigl[\sphweight\bigr]$ is irreducible. Since the group $y U y\inv$ is connected and has no non-trivial character, statement two follows. (See for instance \cite{popov1994invariant}[Theorems 3.1, 3.17] or \cite{francone2023minimal}[Lemma 5.2.4]).
   Finally, in the notation of the third statement, assume by contradiction that $(\delamw)= (\Delta^\mu_w)$.
   Since $\CC[G/\hG]^*= \CC[G]^*= \CC^*$ because of the semisimplicity of $G$, then $\delamw= c \Delta^\mu_w$ for some $c \in \CC^*.$ Because of statement 3 of Lemma \ref{lem:characterisation generalised spherical minors}, we deduce that $w y \lambda^*=w y \mu^*$, which implies that $\lambda=\mu.$
\end{proof}

We define the \textit{open cell} of the spherical space $G/\hG$ as   $$
(G/\hG)_0:= z \inv B z \hG/\hG= y B y\inv \hG/\hG.
$$
This is an open affine subsapace of the affine variety $G/\hG.$
\begin{coro}
    \label{cor:vanishing generalised spherical minors}
    The map assigning to $b \in \Bc$ the zero locus of $\Delta^b_e$ gives a bijection between $\Bc$ and the set of irreducible components of the complement of $(G/\hG)_0$ in $G/\hG$. Any such component is of codimension one in $G/\hG.$
\end{coro}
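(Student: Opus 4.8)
The plan is to identify the complement of $(G/\hG)_0$ with the union of the vanishing loci of the $\Delta^b_e$, $b \in \Bc$, and then use the factoriality machinery of the previous lemma to turn that set-theoretic description into the claimed bijection with the irreducible components. First I would observe that $(G/\hG)_0 = yBy\inv\hG/\hG$ is precisely the open $B'$-orbit, where $B' = yBy\inv$ is the Borel subgroup with maximal torus $yTy\inv$ and set of positive roots $y\Phi^+$; since $\hG$ is spherical and $(G/\hG)_0$ is the open $B'$-orbit, the complement $D := G/\hG \setminus (G/\hG)_0$ is a union of $B'$-stable prime divisors (the colours and the $B'$-stable boundary divisors), each of codimension one, as is standard for spherical homogeneous spaces (here there are no $G$-stable divisors since $G/\hG$ is affine homogeneous). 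So the codimension-one assertion is automatic once we know $D$ is a proper closed subset, which it is since $(G/\hG)_0$ is open and dense.

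Next I would pin down the functions. Using the isomorphism \eqref{eq:spherical peter weyl} and the fact that $\varphi_\lambda$ is the unique $(yUy\inv)$-eigenvector with $\varphi_\lambda(v_\lambda) = 1$, the function $\Delta^b_e$ is, up to a nonzero scalar, the highest-weight matrix coefficient defining the complement of the big cell: concretely, a point $g\hG \in G/\hG$ lies in $(G/\hG)_0$ if and only if $g\hG \in yBy\inv\hG/\hG$, which by the theory of the big cell happens if and only if $\varphi_\lambda(gv_\lambda) \neq 0$ for every generator $\lambda$ of $\sphweight$, i.e. for every $b \in \Bc$ — this is the spherical analogue of the classical fact that the open Bruhat cell is the nonvanishing locus of the principal minors. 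Hence set-theoretically $D = \bigcup_{b \in \Bc} \{\Delta^b_e = 0\}$. Then, by Lemma \ref{lem:irred generalised spherical minors}(2), each $\Delta^b_e$ is irreducible in the factorial ring $\CC[G/\hG]$, so its zero locus is irreducible of codimension one; and by Lemma \ref{lem:irred generalised spherical minors}(3), for $b \neq b'$ the ideals $(\Delta^b_e)$ and $(\Delta^{b'}_e)$ are distinct, hence (again by factoriality, both being height-one primes) the zero loci are distinct irreducible divisors. Therefore the $|\Bc|$ sets $\{\Delta^b_e = 0\}$ are exactly the distinct irreducible components of $D$, and $b \mapsto \{\Delta^b_e = 0\}$ is the desired bijection.

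The main obstacle I expect is the precise justification that $(G/\hG)_0$ is exactly the simultaneous nonvanishing locus of the $\Delta^b_e$, and not merely contained in it. One inclusion is formal: on $(G/\hG)_0$ we may write $g\hG = yb y\inv\hG$ with $b \in B$, and then $\Delta^b_e(g) = \varphi_\lambda((\wt e)\inv yby\inv v_\lambda) = \varphi_\lambda(\ov{y}\, b'\, \ov{y}\inv v_\lambda)$ is a product of a character value on the torus part of $b'$ with $1$, hence nonzero, using that $\varphi_\lambda$ is $(yUy\inv)$-invariant and $v_\lambda$ is a highest-weight-type vector. For the reverse inclusion I would argue that the common nonvanishing locus is a $B'$-stable open subset; since $G/\hG$ has a unique open $B'$-orbit, it suffices to show this open set is contained in $(G/\hG)_0 \cup (\text{something of smaller dimension})$ — or, more cleanly, to show directly that if all $\Delta^b_e(g) \neq 0$ then the $(yUy\inv)$-orbit map exhibits $g\hG$ in the big cell, by reconstructing the $U$-coordinates from ratios of matrix coefficients exactly as in the Bruhat decomposition / Gauss factorisation. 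This last point is where one has to be a little careful because $\Bc$ generates $\sphweight$ but the $V(b)$ need not separate points of $G/\hG$ individually; one uses that together with Lemma \ref{lem:irred generalised spherical minors}(1) the $\Delta^\lambda_e$ for \emph{all} $\lambda \in \sphweight$ are monomials in the $\Delta^b_e$, and these do suffice (via \eqref{eq:spherical peter weyl}) to detect membership in the open $B'$-orbit. Once this is settled, the rest is the formal factoriality argument above.
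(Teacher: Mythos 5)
Your plan reverses the logical order of the paper: you want to first establish the set-theoretic identity $G/\hG \setminus (G/\hG)_0 = \bigcup_{b \in \Bc} \{\Delta^b_e = 0\}$ (which in the paper is Corollary \ref{cor:nonvanishing generalised spherical minors}, stated \emph{after} and deduced \emph{from} the present corollary) and then read off the bijection. The one genuine gap is exactly the step you flag as the main obstacle, namely the reverse inclusion $\bigcap_{b}\{\Delta^b_e \neq 0\} \subseteq (G/\hG)_0$. Your sketch does not close it: the Gauss-factorisation heuristic works on $G$ because one has principal minors for \emph{all} fundamental weights and can reconstruct the factors of $u_- t u_+$; here one only has $\Delta^b_e$ for the much smaller monoid $\sphweight$, and, as you yourself observe, these need not separate points. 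The sentence ``these do suffice \dots to detect membership in the open $B'$-orbit'' is precisely the claim that needs a proof, and none is offered.

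The paper avoids this by a counting argument. It checks the easy inclusion $(G/\hG)_0 \subseteq \{\Delta^b_e \neq 0\}$, so each $Z(\Delta^b_e)$ lies in the complement $D$; observes that $D$ is pure of codimension one simply because it is the complement of an affine open in the affine variety $G/\hG$ (no spherical structure theory is needed, contrary to your colours argument); deduces from Lemma \ref{lem:irred generalised spherical minors}(2),(3) and factoriality that $b \mapsto Z(\Delta^b_e)$ is a well-defined injection into the set of irreducible components of $D$; and finally cites \cite{spherangmin}[Proposition~2.3] for $|\text{irr. comp. of } D| = |\Delta| - |\hDel| = |\Bc|$, forcing surjectivity. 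If you want to avoid the citation, a clean replacement for your ``Gauss factorisation'' step is: each irreducible component $D_1$ of $D$ is $yBy\inv$-stable (that group is connected), so its height-one prime ideal in the factorial ring $\CC[G/\hG]$ is principal, generated by a $yBy\inv$-semi-invariant; by \eqref{eq:spherical peter weyl} such a semi-invariant is a scalar times $\Delta^\lambda_e$ for some $\lambda \in \sphweight$, and irreducibility together with Lemma \ref{lem:irred generalised spherical minors}(1),(2) forces $\lambda \in \Bc$. Either way the missing surjectivity requires an input (the count or the semi-invariant classification) that your proposal does not supply.
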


\begin{proof}
    Clearly, for any $b \in \Bc$, we have that $(G/\hG)_0 \subseteq \{ \Delta^b_e \neq 0 \}. $ 
    Since $G/\hG$ is affine and $\CC[G/\hG]$ is factorial, statements 2 and 3 of Lemma \ref{lem:irred generalised spherical minors} imply that the map described in Corollary \ref{cor:vanishing generalised spherical minors} is well defined and injective. By \cite{spherangmin}[Proposition 2.3], there are exactly $| \Delta | - | \hDel |$ irreducible components in $(G/\hG) \setminus (G/\hG)_0.$ We conclude by noticing that $| \Delta | - | \hDel |= |\Bc|$. As a remark, the open cell $(G/\hG)_0$ is clearly pure of codimension one being affine.   
\end{proof}

\begin{coro}
    \label{cor:nonvanishing generalised spherical minors}
    The simultaneous non-vanishing locus of the $\Delta^b_e$, for $b \in \Bc$ , in $G/\hG$, is the open cell $(G/\hG)_0.$
\end{coro}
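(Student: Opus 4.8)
The plan is to observe that this corollary is essentially a restatement of Corollary \ref{cor:vanishing generalised spherical minors}, so the only work is to unwind the latter. Set $Y_0:=(G/\hG)_0$ and let $S:=(G/\hG)\setminus Y_0$, a closed subset of the affine variety $G/\hG$. Being closed in a Noetherian space, $S$ is the union of its finitely many irreducible components. By Corollary \ref{cor:vanishing generalised spherical minors}, those components are precisely the zero loci $\{\Delta^b_e=0\}$ for $b\in\Bc$ (this is the content of the asserted bijection between $\Bc$ and the set of irreducible components). Hence, as subsets of $G/\hG$,
\[
(G/\hG)\setminus (G/\hG)_0 \;=\; \bigcup_{b\in\Bc}\{\,\Delta^b_e=0\,\}.
\]

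Taking complements in $G/\hG$ then gives
\[
(G/\hG)_0 \;=\; \bigcap_{b\in\Bc}\bigl(G/\hG\setminus\{\Delta^b_e=0\}\bigr)\;=\;\{\,p\in G/\hG\,:\,\Delta^b_e(p)\neq 0\ \text{for all}\ b\in\Bc\,\},
\]
which is exactly the simultaneous non-vanishing locus of the family $(\Delta^b_e)_{b\in\Bc}$, as claimed. (When $\Bc=\emptyset$, i.e. $G=\hG$, both sides are all of $G/\hG$, so the statement is vacuously consistent.)

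I do not expect any real obstacle here. The inclusion $(G/\hG)_0\subseteq\bigcap_{b\in\Bc}\{\Delta^b_e\neq 0\}$ was already recorded at the start of the proof of Corollary \ref{cor:vanishing generalised spherical minors}; the reverse inclusion is the genuinely new input, and it is precisely the surjectivity half of the bijection in that corollary — namely that the divisors $\divv(\Delta^b_e)$ (equivalently, the hypersurfaces $\{\Delta^b_e=0\}$) exhaust \emph{all} the irreducible components of the complement of the open cell, so a point outside $(G/\hG)_0$ must lie on at least one of them. The only general fact invoked is that a closed subvariety of a Noetherian scheme equals the union of its irreducible components, which is standard.
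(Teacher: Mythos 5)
Your argument is correct and follows the same route as the paper, whose proof is simply the one-line remark that the claim is ``a consequence of Corollary~\ref{cor:vanishing generalised spherical minors}.'' You have merely made explicit the unwinding that the paper leaves implicit: the complement of $(G/\hG)_0$ is the union of the zero loci $\{\Delta^b_e=0\}$ over $b\in\Bc$, so taking complements gives the result.
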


\begin{proof}
    It's a consequence of Corollary \ref{cor:vanishing generalised spherical minors}.
\end{proof}

For $\delta \in y\Delta$, we denote by $\iota_\delta : \SL_2 \longto G$ the morphism determined by the $\mathfrak{sl}_2$-triple
$(Z_\delta, H_\delta, Z_{-\delta})$. In other words, we have that

\begin{equation*}
   \iota_\delta \begin{pmatrix} 
   1 & t \\
   0 & 1
    \end{pmatrix}=z_\delta(t)
   \quad  \iota_\delta \begin{pmatrix} 
   t& 0 \\
   0 & t\inv
\end{pmatrix}= \delta^\vee(t) \quad 
   \iota_\delta \begin{pmatrix} 
   1 & 0 \\
   t & 1
    \end{pmatrix}=z_{-\delta}(t).
\end{equation*}
Note that
\begin{equation}
    \label{eq:s tilde}
\widetilde{s_\delta}= \iota_\delta \begin{pmatrix} 
    0 & -1\\
    1 & 0
   \end{pmatrix}.
   \end{equation}
Let $\SL_\delta$ be the image of $\iota_\delta.$

\begin{lemma}
\label{lem:twist generalised spherical minors}
Let $ w \in W$, $b \in \Bc$ and $\delta \in y \Delta$. Moreover, let $g \in G$, $s \in \SL_\delta$ and $t \in \CC.$

\begin{enumerate}
    \item If $\langle wy b^*, \delta^\vee \rangle =0$, then $\Delta^b_w(sg)= \Delta^b_w(g).$
    \item If $w\inv \delta \in y \Phi^+$, then $\Delta^b_w \bigl(z_{\delta}(-t) g \bigr)= \Delta^b_w(g).$
    \item If $w\inv \delta \in y \Phi^-$ and  $\langle wy b^*, \delta^\vee \rangle =-1$, then $\Delta^b_w \bigl(z_{\delta}(-t) g \bigr)= \Delta^b_w(g) + t  \Delta^b_{s_\delta w}(g).$
\end{enumerate}
\end{lemma}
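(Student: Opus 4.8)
The strategy is to reduce all three statements to computations inside the rank-one subgroup $\SL_\delta$, using the characterisation of $\Delta^b_w$ from Lemma \ref{lem:characterisation generalised spherical minors} (in particular the left $(wyUy\inv w\inv)$-invariance and the $T$-eigenfunction property) together with the commutation relations \eqref{z alpha s alpha = ..}. For statement (1), write $s\in\SL_\delta$ using the Bruhat decomposition of $\SL_2$: generically $s= z_{-\delta}(a)\,\delta^\vee(c)\,z_\delta(b)$, and the exceptional stratum is a lower-dimensional set on which the identity follows by continuity. The factor $z_\delta(b)$ lies in $yUy\inv$ if $\delta\in y\Phi^+$... but in fact we should be careful: what we need is that $\delta^\vee(c)$ acts trivially when $\langle wyb^*,\delta^\vee\rangle=0$, which is exactly property (3) of Lemma \ref{lem:characterisation generalised spherical minors} evaluated at $t=\delta^\vee(c)$, since $(-wyb^*)(\delta^\vee(c))=c^{-\langle wyb^*,\delta^\vee\rangle}=1$; and $z_{\pm\delta}(\cdot)$ act trivially because $\langle wyb^*,\delta^\vee\rangle=0$ forces the $\SL_\delta$-orbit map on the relevant weight space to be constant. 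The cleanest route: the function $g\mapsto \Delta^b_w(sg)$, as a function of $s\in\SL_2$ via $\iota_\delta$, is a matrix coefficient of an $\SL_2$-representation whose weights all satisfy $\langle\cdot,\delta^\vee\rangle=0$ once we use left $(wyUy\inv w\inv)$-invariance to kill one unipotent direction; hence it is $\SL_2$-invariant. I would phrase this by applying the three properties of Lemma \ref{lem:characterisation generalised spherical minors} directly.

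For statements (2) and (3), the key observation is that $\Delta^b_w$ is left-invariant under $wyUy\inv w\inv$, i.e. under $z_{\delta'}(\cdot)$ for every $\delta'$ with $w\inv\delta'\in y\Phi^+$. So in case (2), where $w\inv\delta\in y\Phi^+$, the element $z_\delta(-t)$ lies in $wyUy\inv w\inv$ and statement (2) is immediate from property (2) of Lemma \ref{lem:characterisation generalised spherical minors}. Case (3) is the substantive one. Here $w\inv\delta\in y\Phi^-$, so $z_\delta(-t)\notin wyUy\inv w\inv$; instead I would use the second identity in \eqref{z alpha s alpha = ..} after writing $z_\delta(-t)$ in terms of $\widetilde{s_\delta}$, or better, compute directly inside $\SL_\delta$. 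Since $\Delta^b_w$ restricted to a coset $g_0\SL_\delta$-translate is controlled by the $\SL_\delta$-subrepresentation of $V(b)$ generated by the relevant vectors, and the hypothesis $\langle wyb^*,\delta^\vee\rangle=-1$ says that weight vector sits in a two-dimensional (standard) $\SL_\delta$-string, the function $t\mapsto \Delta^b_w(z_\delta(-t)g)$ is affine-linear in $t$. Its value at $t=0$ is $\Delta^b_w(g)$; its derivative at $t=0$ is $\frac{d}{dt}\big|_0 \Delta^b_w(z_\delta(-t)g)$, and I must identify this with $\Delta^b_{s_\delta w}(g)$.

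The identification of the linear coefficient is where the real work is, and I expect it to be the main obstacle. The plan is: use \eqref{z alpha s alpha = ..}, specifically $(\widetilde{s_\delta})^{-1} z_{-\delta}(t) = z_{-\delta}(-t\inv)\,\delta^\vee(t)\,z_\delta(t\inv)$, to relate $\widetilde{w}$ and $\widetilde{s_\delta w}$ (noting that since $w\inv\delta\in y\Phi^-$ we have $\ell(y\inv s_\delta w y)=\ell(y\inv w y)-1$, hence $\widetilde{s_\delta w}=(\widetilde{s_\delta})^{-1}\widetilde{w}$ up to the appropriate length-additivity bookkeeping — one must check $\widetilde{w}=\widetilde{s_\delta}\,\widetilde{s_\delta w}$). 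Then expand $\Delta^b_w(z_\delta(-t)g)=\varphi_b(\widetilde{w}^{-1}z_\delta(-t)g v_b)=\varphi_b(\widetilde{s_\delta w}^{-1}(\widetilde{s_\delta})^{-1}z_\delta(-t)g v_b)$ and use the $\SL_2$-relation $\begin{pmatrix}0&-1\\1&0\end{pmatrix}^{-1}\begin{pmatrix}1&0\\-t&1\end{pmatrix}$... wait, $z_\delta(-t)$ is \emph{upper} triangular in $\iota_\delta$-coordinates, so I instead use that $(\widetilde{s_\delta})^{-1} z_\delta(-t)$ can be rewritten via the $\SL_2$ identity $\begin{pmatrix}0&-1\\1&0\end{pmatrix}^{-1}\begin{pmatrix}1&-t\\0&1\end{pmatrix}=\begin{pmatrix}0&1\\-1&0\end{pmatrix}\begin{pmatrix}1&-t\\0&1\end{pmatrix}=\begin{pmatrix}0&1\\-1&t\end{pmatrix}=\begin{pmatrix}1&0\\-t&1\end{pmatrix}\begin{pmatrix}0&1\\-1&0\end{pmatrix}$, so $(\widetilde{s_\delta})^{-1}z_\delta(-t)=z_{-\delta}(-t)(\widetilde{s_\delta})^{-1}$. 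Therefore $\Delta^b_w(z_\delta(-t)g)=\varphi_b(\widetilde{s_\delta w}^{-1}z_{-\delta}(-t)(\widetilde{s_\delta})^{-1}\widetilde{w}\,\widetilde{s_\delta w}\cdot$ — no: $(\widetilde{s_\delta})^{-1}\widetilde{w}=\widetilde{s_\delta w}$, so this is $\varphi_b(\widetilde{s_\delta w}^{-1}z_{-\delta}(-t)\widetilde{s_\delta w}\cdot\widetilde{s_\delta w}^{-1}$... let me just say: after these manipulations one gets $\Delta^b_w(z_\delta(-t)g)=\Delta^b_{s_\delta w}(z_{-\delta}(-t)g')$ for a suitable $g'$, and then expands $z_{-\delta}(-t)$ acting on $v_b$ via the weight-string structure. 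Concretely, $\widetilde{s_\delta w}^{-1}z_{-\delta}(-t) = \widetilde{w}^{-1}\widetilde{s_\delta}z_{-\delta}(-t)$; using the $\SL_2$ relation $\begin{pmatrix}0&-1\\1&0\end{pmatrix}\begin{pmatrix}1&0\\-t&1\end{pmatrix} = \begin{pmatrix}t&-1\\1&0\end{pmatrix}=\begin{pmatrix}1&-t\\0&1\end{pmatrix}\begin{pmatrix}0&-1\\1&0\end{pmatrix}$, we get $\widetilde{s_\delta}z_{-\delta}(-t)=z_\delta(-t)\widetilde{s_\delta}$, hence $\Delta^b_w(z_\delta(-t)g)=\varphi_b(\widetilde{w}^{-1}z_\delta(-t)\widetilde{s_\delta}\widetilde{s_\delta}^{-1}g v_b)$ — circular. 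The correct move is to act on the \emph{right}: expand $z_\delta(-t)g v_b$ is awkward since $g$ is arbitrary, so instead keep $g$ and note $\Delta^b_w(z_\delta(-t)g)=(\widetilde{w}\,z_\delta(-t)^{-1}\cdot$ viewed as... ). I will organize it as: the function $t\mapsto\Delta^b_w(z_\delta(-t)g)$ is a matrix coefficient $\langle\varphi,\ \iota_\delta\begin{pmatrix}1&-t\\0&1\end{pmatrix}\cdot v\rangle$ in the $\SL_\delta$-module $V(b)$ with $\varphi=(\widetilde w)\cdot\varphi_b$-type vector of $\delta$-weight determined by $\langle wyb^*,\delta^\vee\rangle=-1$ and $v=gv_b$; since the lowest weight of a vector dual to a $\delta$-weight-$(-1)$ vector... . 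The punchline: in the standard rep of $\SL_2$, $\begin{pmatrix}1&-t\\0&1\end{pmatrix}$ sends $f_1\mapsto f_1$, $f_2\mapsto -t f_1+f_2$ in a suitable basis, giving exactly the affine-linear behavior, with linear term matched to $\Delta^b_{s_\delta w}$ by reading off that $\widetilde{s_\delta w}=\widetilde w\cdot z_{-\delta}(\pm1)\cdots$ does the weight shift. I would then pin down the sign $-t$ versus $+t$ and the absence of a $\delta^\vee$-distortion using that $\varphi_b$ is normalized by $\varphi_b(v_b)=1$ and $\Delta^b_{s_\delta w}(\widetilde{s_\delta w})=1$, forcing the coefficient to be exactly $+t\,\Delta^b_{s_\delta w}(g)$ as claimed. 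The main obstacle is thus the careful sign-and-normalization bookkeeping in passing between $\widetilde w$, $\widetilde{s_\delta}$ and $\widetilde{s_\delta w}$; everything else is a two-dimensional $\SL_2$ computation.
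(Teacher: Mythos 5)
Your proposal follows the same strategy as the paper's proof: reduce everything to the action of $\SL_\delta$ on the covector $\widetilde w\varphi_b\in V(b)^*$, and exploit the invariance properties of Lemma \ref{lem:characterisation generalised spherical minors}. Statement (2) you handle exactly as the paper does. For statement (1) your ``cleanest route'' is valid and essentially the paper's argument in a different guise: one unipotent direction $z_{\pm\delta}$ is killed by left $(wyUy^{-1}w^{-1})$-invariance, so $\widetilde w\varphi_b$ sits at an end of its $\SL_\delta$-string, and since its $\delta$-weight is $0$ that string must be a singleton. (The paper phrases this as: $\widetilde w\varphi_b$ is an extremal weight vector of weight $wyb^*$, so $\delta$-weight $0$ forces the $\SL_\delta$-submodule to be trivial. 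Same mechanism.)

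The genuine gap is in statement (3). You write that ``the hypothesis $\langle wyb^*,\delta^\vee\rangle=-1$ says that weight vector sits in a two-dimensional (standard) $\SL_\delta$-string.'' That does not follow: a vector of $\delta$-weight $-1$ could sit in the middle of a string with weights $\{-3,-1,1,3\}$, in which case $t\mapsto\Delta^b_w(z_\delta(-t)g)$ would be cubic in $t$, not affine. What you need is that $Z_{-\delta}\cdot\widetilde w\varphi_b=0$, so that $\widetilde w\varphi_b$ is a \emph{lowest} weight vector for $\SL_\delta$, and then $\delta$-weight $-1$ does force a two-dimensional string. This is supplied by the hypothesis $w^{-1}\delta\in y\Phi^-$, which you use only for the length-additivity $\widetilde w=\widetilde{s_\delta}\,\widetilde{s_\delta w}$: since $-\delta\in wy\Phi^+$, the subgroup $z_{-\delta}(\CC)$ lies in $wyUy^{-1}w^{-1}$, under which $\widetilde w\varphi_b$ is invariant by property (2) of Lemma \ref{lem:characterisation generalised spherical minors}. (Alternatively, extremality of the weight $wyb^*$ in $V(b)^*$ gives the same conclusion.) Once this is in place, the rest of your plan — affine dependence in $t$, derivative equal to $Z_\delta\widetilde w\varphi_b$, identification with $\bigl(\widetilde{s_\delta}\bigr)^{-1}\widetilde w\varphi_b=\widetilde{s_\delta w}\varphi_b$ — is exactly what the paper does, citing \cite{francone2023minimal}[Lemma 5.4.2] for the $\SL_2$ sign bookkeeping that you correctly flag as the remaining chore.
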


\begin{proof}
    Recall that $\Delta^b_w(sg)= \bigl( \widetilde{w}  \varphi_b \bigr)(sg  v_b)= \bigl( s \inv \widetilde{w}  \varphi_b \bigr)(g  v_b).$ 
    But $\widetilde{w} \varphi_b$ is an extremal weight, in the $G$-representation $V(b)^*$, of weight $wyb^*.$
    In particular, if $\langle wy b^*, \delta^\vee \rangle =0$, then $\CC \widetilde{w} \varphi_b$ is the trivial $\SL_\delta$-representation. Hence, we have that $s\inv \widetilde{w} \varphi_b= \widetilde{w} \varphi_b$, which implies the first statement. 
    
    Next, assume that $s= z_\delta(-t).$ If  $w\inv \delta \in y \Phi^+$, then $s \in w y U y\inv w\inv.$ The second assertion follows from statement 2 of Lemma \ref{lem:characterisation generalised spherical minors}.
    
    Finally, suppose that $w\inv \delta \in y \Phi^-$ and  $\langle wy b^*, \delta^\vee \rangle =-1$. 
    Then, by Lemma \ref{lem:characterisation generalised spherical minors}, $\widetilde{w} \varphi_\lambda$ is a lowest weight vector, of weight $-1$, with respect to $\SL_\delta$.
    Using some some standard arguments on the representation theory of $\SL_2$ (see for instance \cite{francone2023minimal}[Lemma 5.4.2]), we compute that 
\begin{equation*}
    \begin{array}{rl}
       \Delta^b_w(z_{\delta}(-t) g) = & \bigl( z_\delta(t) \widetilde{w}  \varphi_b \bigr)(g  v_b)  \\[0.5em]
        = &  \widetilde{w}  \varphi_b(g  v_b) + t \bigl( Z_\delta \widetilde{w}  \varphi_b\bigr )(g  v_b) \\[0.5em]
        = &  \Delta^b_w(g  v_b) + t \bigl( \bigl(\widetilde{s_\delta}\bigr)^{-1}  \widetilde{w}  \varphi_b\bigr)(g  v_b).
    \end{array}
    \end{equation*}
    But if $w\inv \delta \in y \Phi^-$, then $\ell(y \inv  w y)= 1+ \ell(y \inv s_\delta w y) $. Hence $\widetilde{w}= \widetilde{s_\delta} \widetilde{s_{\delta} w}.$ 
    In particular,
    $$ \bigl( \bigl(\widetilde{s_\delta} \bigr)^{-1}  \widetilde{w}  \varphi_b\bigr)(g  v_b)=   \bigr(  \widetilde{s_{\delta} w} \varphi_b\bigr)(g  v_b)= \Delta^b_{s_\delta w}(g).$$
\end{proof}

The following corollary is a useful and obvious consequence of Lemma \ref{lem:twist generalised spherical minors}.
\begin{coro}
    \label{cor: Z delta sph minor.}
Let $ w \in W$, $b \in \Bc$, $\delta \in y \Delta$ and $g \in G$.

\begin{enumerate}
    \item If $\langle wy b^*, \delta^\vee \rangle =0$, then $ Z_\delta \cdot \Delta^b_w= 0 .$
    \item If $w\inv \delta \in y \Phi^+$, then $ Z_\delta \cdot \Delta^b_w= 0.$
    \item If $w\inv \delta \in y \Phi^-$ and  $\langle wy b^*, \delta^\vee \rangle =-1$, then $\ Z_\delta \cdot \Delta^b_w=  \Delta^b_{s_\delta w}(g).$
\end{enumerate}    
\end{coro}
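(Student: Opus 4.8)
The plan is to obtain all three assertions by differentiating, at $t=0$, the identities of Lemma~\ref{lem:twist generalised spherical minors} with respect to the parameter $t$.

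First I would recall the convention for the action of $\lg$ on $\CC[G]$ implicit in the statement: for $X\in\lg$ and $f\in\CC[G]$ one sets $(X\cdot f)(g)=\frac{d}{dt}\big|_{t=0}f\bigl(\exp(-tX)g\bigr)$, so that $\lg$ acts on $\CC[G]$ as a Lie algebra of derivations commuting with right translations; in particular $Z_\delta\cdot\Delta^b_w$ again lies in $\CC[G]^{\hG}$. Since $z_\delta(-t)=\exp(-tZ_\delta)$ by \eqref{eq:z y alpha}, this reads $(Z_\delta\cdot f)(g)=\frac{d}{dt}\big|_{t=0}f\bigl(z_\delta(-t)g\bigr)$ for every $g\in G$, which is precisely the quantity whose first-order expansion appears inside the proof of Lemma~\ref{lem:twist generalised spherical minors}, via $z_\delta(t)\widetilde w\varphi_b=\widetilde w\varphi_b+tZ_\delta\widetilde w\varphi_b+O(t^2)$.

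With this observation the three cases follow immediately. In case (1), $z_\delta(-t)$ lies in $\SL_\delta$, so statement~1 of Lemma~\ref{lem:twist generalised spherical minors} gives $\Delta^b_w(z_\delta(-t)g)=\Delta^b_w(g)$, which is constant in $t$, whence $Z_\delta\cdot\Delta^b_w=0$. In case (2), statement~2 of the lemma yields the same constancy $\Delta^b_w(z_\delta(-t)g)=\Delta^b_w(g)$, so again $Z_\delta\cdot\Delta^b_w=0$. In case (3), statement~3 of the lemma gives $\Delta^b_w(z_\delta(-t)g)=\Delta^b_w(g)+t\,\Delta^b_{s_\delta w}(g)$, an affine function of $t$; differentiating at $t=0$ yields $(Z_\delta\cdot\Delta^b_w)(g)=\Delta^b_{s_\delta w}(g)$ for all $g\in G$, that is $Z_\delta\cdot\Delta^b_w=\Delta^b_{s_\delta w}$.

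There is no genuine obstacle here; the only point needing a moment of care is to align the sign in the convention for the $\lg$-action with the first-order expansion used in the proof of Lemma~\ref{lem:twist generalised spherical minors}. Once that is fixed, the corollary is immediate.
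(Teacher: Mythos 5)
Your proof is correct and is exactly the argument the paper intends: the corollary is stated there as an immediate consequence of Lemma \ref{lem:twist generalised spherical minors}, obtained by differentiating its identities at $t=0$ under the left-regular action convention $(Z_\delta\cdot f)(g)=\frac{d}{dt}\big|_{t=0}f(z_\delta(-t)g)$, which is precisely what you do. Your care with the sign convention is appropriate, and you even silently correct the small typo in statement 3 (the spurious ``$(g)$'' on the right-hand side).
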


\begin{coro}
\label{cor:sph minors only depend on weights}
    Let $\lambda \in \sphweight$ and $v, w \in W$. If $wy \lambda^*= v y \lambda^*$, then $\Delta^\lambda_w= \Delta^\lambda_v.$
\end{coro}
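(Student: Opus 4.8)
The plan is to reduce to the already-proved Lemma \ref{lem:characterisation generalised spherical minors}, which says that $\Delta^\lambda_w$ is the \emph{unique} function in $\CC[G]$ satisfying the four listed properties (right $\hG$-invariance, left $(wyUy\inv w\inv)$-invariance, the $T$-weight condition with character $-wy\lambda^*$, and normalisation $\Delta^\lambda_w(\widetilde w)=1$). Observe that property (2), left invariance under $wyUy\inv w\inv$, depends on $w$ only through the parabolic $wyBy\inv w\inv$; more precisely, it depends only on the set of positive roots $wy\Phi^+$, equivalently on the element $wy \in W/\mathrm{(stabiliser)}$ up to the parabolic fixing $\lambda$. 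So the first step is to make precise that both properties (2) and (3) for $\Delta^\lambda_w$ and $\Delta^\lambda_v$ coincide under the hypothesis $wy\lambda^*=vy\lambda^*$: property (3) is literally the same character, and property (2) follows because $wy\lambda^*=vy\lambda^*$ forces $wy$ and $vy$ to differ by an element of the stabiliser $W_{\lambda^*}$ of $\lambda^*$, hence $wyUy\inv w\inv$ and $vyUy\inv v\inv$ have the same image in the relevant parabolic acting on $V(\lambda)^*$.

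The cleanest route, however, avoids even that and argues directly via Corollary \ref{cor: Z delta sph minor.}. Since $W$ is generated by the simple reflections $s_\delta$ with $\delta\in y\Delta$, it suffices to show: if $wy\lambda^*=vy\lambda^*$ and $v=s_\delta w$ for some $\delta\in y\Delta$, then $\Delta^\lambda_w=\Delta^\lambda_v$; the general case follows by induction on $\ell(y\inv v\inv w y)$ or a chain of such elementary moves. So assume $s_\delta w\, y\lambda^* = w\, y\lambda^*$, i.e.\ $\langle wy\lambda^*,\delta^\vee\rangle=0$. I would split on whether $w\inv\delta\in y\Phi^+$ or $y\Phi^-$. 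If $w\inv\delta\in y\Phi^+$: by part (1) of Lemma \ref{lem:twist generalised spherical minors}, $\Delta^\lambda_w(z_\delta(-t)g)=\Delta^\lambda_w(g)$ for all $t$ (using $\langle wy\lambda^*,\delta^\vee\rangle=0$), and also $z_\delta(-t)\in s_\delta w\, yUy\inv(s_\delta w)\inv$ arguments plus part (1) applied to $v=s_\delta w$ (whose extremal weight $vy\lambda^*$ equals $wy\lambda^*$, still orthogonal to $\delta^\vee$) show $\widetilde w\varphi_\lambda$ and $\widetilde v\varphi_\lambda$ are both $\SL_\delta$-fixed. Then compare normalisations: $\widetilde v = \widetilde{s_\delta w}$ and $\widetilde w$ differ by $\widetilde{s_\delta}$ (when $\ell$ is additive in the right way) or by $\widetilde{s_\delta}\inv$, but $\widetilde{s_\delta}$ acts trivially on the $\SL_\delta$-fixed line $\CC\widetilde w\varphi_\lambda$, so $\widetilde v\varphi_\lambda = \pm\widetilde w\varphi_\lambda$; the sign is pinned down to $+1$ by evaluating on the $\SL_\delta$-fixed vector and using $\varphi_\lambda(v_\lambda)=1$ together with $\langle\delta^\vee,\cdot\rangle=0$ forcing the relevant $\SL_2$-representation to be trivial.

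Concretely, I would just verify the four characterising properties of Lemma \ref{lem:characterisation generalised spherical minors} for $\Delta^\lambda_v$ with $v=s_\delta w$, using $\Delta^\lambda_w$ as candidate: property (1) is immediate since $\Delta^\lambda_w\in\CC[G]^{\hG}$; property (3) holds because $-vy\lambda^*=-wy\lambda^*$ by hypothesis; property (2) requires $\Delta^\lambda_w$ to be left $(vyUy\inv v\inv)$-invariant, and since $vyUy\inv v\inv$ is generated (together with $wyUy\inv w\inv\cap vyUy\inv v\inv$) by the one-parameter subgroup $z_\delta(\cdot)$ or $z_{-\delta}(\cdot)$, the missing invariance is exactly supplied by part (1) of Lemma \ref{lem:twist generalised spherical minors} under the orthogonality $\langle wy\lambda^*,\delta^\vee\rangle=0$; property (4), $\Delta^\lambda_w(\widetilde v)=1$, follows since $\widetilde v\in \widetilde w\cdot\SL_\delta$ and $\Delta^\lambda_w$ is constant on $\widetilde w\,\SL_\delta$ by the same orthogonality (the $\SL_\delta$-orbit of $\widetilde w\varphi_\lambda$ being a point). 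By uniqueness in Lemma \ref{lem:characterisation generalised spherical minors}, $\Delta^\lambda_v=\Delta^\lambda_w$.

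The main obstacle I anticipate is bookkeeping around property (2): one must check that under $\langle wy\lambda^*,\delta^\vee\rangle=0$ the two unipotent subgroups $wyUy\inv w\inv$ and $vyUy\inv v\inv$ act the same way on the line $\CC\,\widetilde w\varphi_\lambda$ inside $V(\lambda)^*$ — equivalently, that $\Delta^\lambda_w$ is \emph{already} invariant under the extra root subgroup appearing in $vyUy\inv v\inv$ but not in $wyUy\inv w\inv$. This is exactly the content of part (1) of Lemma \ref{lem:twist generalised spherical minors}, so the obstacle is really just organising the case analysis ($w\inv\delta\in y\Phi^\pm$) and the sign in the normalisation cleanly; both are handled by the trivial-$\SL_2$-representation argument once orthogonality is in force. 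No genuinely new input beyond Lemmas \ref{lem:characterisation generalised spherical minors}, \ref{lem:twist generalised spherical minors} and Corollary \ref{cor: Z delta sph minor.} is needed.
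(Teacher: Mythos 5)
Your single-step argument — that $\Delta^\lambda_w$ satisfies the four characterising properties for $\Delta^\lambda_{s_\delta w}$ whenever $\delta\in y\Delta$ and $\langle wy\lambda^*,\delta^\vee\rangle=0$ — is fine, and is essentially the inductive step the paper uses. The gap is in the reduction: you claim the general case follows "by induction on $\ell(y\inv v\inv w y)$ or a chain of such elementary moves," but such a chain need not exist. The chain you need is a sequence $w=w_0,\dots,w_n=v$ with $w_{i+1}=s_{\delta_i}w_i$, each $\delta_i\in y\Delta$, and each intermediate weight equal to $wy\lambda^*$ — which forces every $\delta_i$ to be orthogonal to $wy\lambda^*$. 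Equivalently, it forces $vw\inv$ to lie in the subgroup generated by those $s_\delta$ with $\delta\in y\Delta$ orthogonal to $wy\lambda^*$. But the stabiliser of $wy\lambda^*$ is only a \emph{standard} parabolic (generated by simple reflections in $y\Delta$ orthogonal to the weight) when $wy\lambda^*$ is $y\Delta$-dominant or antidominant, which fails once $w\neq e$. Concretely, in type $A_2$ with $y\Phi^+$-simple roots $\delta_1=\varepsilon_1-\varepsilon_2,\ \delta_2=\varepsilon_2-\varepsilon_3$ and $y\lambda^*=\varepsilon_1$: take $w=s_{\delta_1}$, $v=s_{\delta_1}s_{\delta_2}$, so $wy\lambda^*=vy\lambda^*=\varepsilon_2$. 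No simple root $\delta_i$ is orthogonal to $\varepsilon_2$, so there are no legal elementary moves at all, even though $w\neq v$ and the corollary's conclusion does hold.

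The paper circumvents this by first decomposing $v=v_1v_2$ and $w=w_1w_2$ with $v_2,w_2\in W_{y\lambda^*}$ and $v_1=w_1$ the minimal-length coset representative. The additivity $\ell(y\inv v y)=\ell(y\inv v_1 y)+\ell(y\inv v_2 y)$ gives $\Delta^\lambda_v=\widetilde{v_1}\cdot\Delta^\lambda_{v_2}$, so the problem reduces to showing $\Delta^\lambda_u=\Delta^\lambda_e$ for $u\in W_{y\lambda^*}$. Inside $W_{y\lambda^*}$ — which \emph{is} a standard parabolic, generated by $s_\delta$ with $\delta\in y\Delta$ orthogonal to $y\lambda^*$ — your elementary-move induction goes through, because each intermediate $u'\in W_{y\lambda^*}$ still satisfies $u'y\lambda^*=y\lambda^*$, keeping the required orthogonality $\langle u'y\lambda^*,\delta^\vee\rangle=0$ intact. (The paper also reduces further to $\lambda=b\in\Bc$ via multiplicativity, but that is a convenience, not the crux.) Without this coset-decomposition step, your proof is incomplete.
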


\begin{proof}
   Let $W_{y\lambda^*}$ be the stabilizer of $y\lambda^*$. It's a classical fact that $W_{y\lambda^*}$ is a parabolic subgroup of $W$ and that 
   $$ W = \bigl\langle s_\delta \, : \, \delta \in y\Delta \quad \text{and} \quad \langle y\lambda^*, \delta^\vee \rangle=0 \bigr\rangle. $$
    We can write uniquely $v=v_1 v_2$ where $v_2 \in W_{y\lambda^*}$ and $v_1$ is the minimal length coset representative of $vW_{y\lambda^*}/W_{y\lambda^*} \in W/W_{y\lambda^*}$.
   Here, the length is computed with respect to the set of simple reflections associated to $y\Delta$. Since $\ell(y\inv v y)= \ell(y\inv v_1 y)+ \ell(y\inv v_2 y)$, then $\Delta^\lambda_v= \widetilde{v_1} \cdot \Delta^\lambda_{v_2}$. 
   Writing similarly $w=w_1 w_2$, we have that $wy \lambda^*= v y \lambda^*$  if and only if $w_1=v_1.$ It follows that it is sufficient to prove the following claim.

\bigskip

 \underline{Claim.} If $v \in W_{y\lambda^*}$, then $\Delta^\lambda_v= \Delta^\lambda_e.$

\bigskip

  \noindent If $\lambda= \sum_{b \in \Bc} n_b b$, for some $n_b \in \NN$, we have that 
   $$ W_{y \lambda^*}= \bigcap_{\{b \, : \, n_b > 0\}} W_{y b^*}.$$
   Then, using the first statement in Lemma \ref{lem:irred generalised spherical minors}, we deduce that it is sufficient to assume that $\lambda=b \in \Bc.$
Suppose that $v= s_\delta u$, for some $\delta \in y \Delta$ such that $\langle yb^*, \delta^\vee \rangle=0$ and $\ell(y\inv v y)=\ell(y\inv u y)+1$. Then, $\Delta^b_v= \widetilde{s_\delta} \Delta^b_u$. Because of statement one of Lemma \ref{lem:twist generalised spherical minors}, we deduce that $\Delta^b_v=\Delta^b_u$. The proof of the claim then follows by induction on $\ell(y\inv v y).$

\end{proof}

Let $K \subseteq T$ be the subgroup generated by the elements $\alpha^\vee(-1)$, for $\alpha \in \Phi$. Note that $K$ is a finite group, and it is normalised by $N_G(T)$. Indeed, if $\alpha \in \Phi$ and $g \in N_G(T)$ represents the class of $w \in W=N_G(T)/T$, then $g \alpha^\vee(-1) g\inv= (w\alpha)^\vee(-1)$.  In particular, $gK=Kg.$ 
The following lemma should be well known to specialists, we include a proof for completeness.
\begin{lemma}
    \label{lem: v bar w bar}
    Let $v,w \in W$, then $\overline{v} \cdot \overline{w} \in \overline{vw} K = K \overline{vw}  $. In particular, $(\overline{v})\inv \in \overline{ v\inv } K = K \overline{ v\inv }. $
\end{lemma}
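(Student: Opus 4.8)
The plan is to reduce, by induction on $\ell(v)$, to the case where $v$ is a simple reflection, and to settle that case using the elementary rank-one identity $\sba^{2}=\alpha^\vee(-1)$, which one checks at once inside the subgroup generated by $x_{\pm\alpha}(t)$ (note also that $\alpha^\vee(-1)^2=e$). Concretely, I would first prove the following claim: for every simple root $\alpha$ and every $w\in W$ one has $\sba\,\overline{w}\in\overline{s_\alpha w}\,K$. To see this, distinguish the two possibilities for $\ell(s_\alpha w)$. If $\ell(s_\alpha w)=\ell(w)+1$, then prepending $\alpha$ to a reduced word for $w$ gives a reduced word for $s_\alpha w$, so $\overline{s_\alpha w}=\sba\,\overline{w}$ and there is nothing more to say. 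If $\ell(s_\alpha w)=\ell(w)-1$, set $w'=s_\alpha w$; then $w=s_\alpha w'$ with $\ell(w)=1+\ell(w')$, hence $\overline{w}=\sba\,\overline{w'}$ and $\sba\,\overline{w}=\sba^{2}\,\overline{w'}=\alpha^\vee(-1)\,\overline{w'}$. Since $\alpha^\vee(-1)\in K$, $\overline{w'}\in N_G(T)$ and $K$ is normalised by $N_G(T)$, this element lies in $K\overline{w'}=\overline{w'}K=\overline{s_\alpha w}\,K$, proving the claim.

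With the claim in hand, the main statement follows by induction on $\ell(v)$. If $v=e$ then $\overline{v}=e$ and $\overline{v}\,\overline{w}=\overline{w}=\overline{vw}$. If $\ell(v)\ge 1$, pick a simple root $\alpha$ with $\ell(s_\alpha v)=\ell(v)-1$ and write $v=s_\alpha v''$ with $v''=s_\alpha v$, so that $\ell(v)=1+\ell(v'')$ and hence $\overline{v}=\sba\,\overline{v''}$. By the induction hypothesis $\overline{v''}\,\overline{w}\in\overline{v''w}\,K$, so $\overline{v}\,\overline{w}=\sba\bigl(\overline{v''}\,\overline{w}\bigr)\in\sba\,\overline{v''w}\,K\subseteq\overline{s_\alpha v''w}\,K=\overline{vw}\,K$, where the inclusion uses the claim applied to $\alpha$ and $v''w$, together with $KK=K$. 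Finally $\overline{vw}\,K=K\,\overline{vw}$, again because $K$ is normalised by $N_G(T)\ni\overline{vw}$.

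For the ``in particular'' part I would simply apply the main statement with $w=v\inv$: this gives $\overline{v}\,\overline{v\inv}\in\overline{vv\inv}\,K=\overline{e}\,K=K$, say $\overline{v}\,\overline{v\inv}=k\in K$, and therefore $(\overline{v})\inv=\overline{v\inv}\,k\inv\in\overline{v\inv}\,K=K\,\overline{v\inv}$. I expect no genuine obstacle here: beyond routine manipulation of reduced words, the only inputs are the identity $\sba^{2}=\alpha^\vee(-1)$ and the already-recorded facts that $K$ is a finite subgroup of $T$ normalised by $N_G(T)$. The single point to stay careful about is the use of the defining multiplicativity $\overline{ab}=\overline{a}\,\overline{b}$ when $\ell(ab)=\ell(a)+\ell(b)$, which is precisely what justifies the identities $\overline{v}=\sba\,\overline{v''}$ and $\overline{w}=\sba\,\overline{w'}$ used above.
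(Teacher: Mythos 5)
Your proof is correct, and it takes a genuinely different inductive route from the paper's. The paper fixes reduced words $\ii$ and $\jj$ for $v$ and $w$, observes that a reduced word for $vw$ is obtained from the concatenation of $\ii$ and $\jj$ by braid moves and deletions of adjacent $s_\alpha s_\alpha$, and then inducts on the number of deletions, i.e.\ on $\tfrac{\ell(v)+\ell(w)-\ell(vw)}{2}$, handling the two basic operations in two preliminary cases and using that the $\sba$ satisfy the braid relations. You instead induct on $\ell(v)$ alone, peeling a simple reflection off the left of $v$, and you isolate the essential rank-one observation into a single clean claim $\sba\,\overline{w}\in\overline{s_\alpha w}\,K$ that handles both the length-increasing and length-decreasing cases uniformly. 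Your version avoids any explicit appeal to the word-rewriting (braid moves plus deletions) mechanism beyond the well-definedness of $\overline{w}$ itself, and it also invokes the normality of $K$ in $N_G(T)$ in a more transparent way than the paper's step where it conjugates $\alpha^\vee(-1)$ past $\overline{v_1}$ to get $(v_1\alpha)^\vee(-1)$; the two proofs rest on the same two facts ($\sba^2=\alpha^\vee(-1)$ and $K\trianglelefteq N_G(T)$), but your inductive bookkeeping is lighter.
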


\begin{proof}
Clearly, the second assertion follows from the first one by considering $w =v \inv$. 
In $\SL_2$ we have that 
    $$\bmat
    0 & -1 \\
    1 & 0 \emat^2= 
    \bmat 
    -1 & 0 \\
   0 & -1 \emat. $$
   Hence, for any $ \alpha \in \Delta$, we have $ \overline{s_\alpha}^2= \alpha^\vee(-1)$.
   Let $\ii $ be a reduced expression of $v$ and $\jj$ be a reduced expression of $w$.
   Let's consider two preliminary cases.

   \bigskip 
   
   \underline{Case 1}. The concatenation of $\ii$ and $\jj$ is a reduced expression for $vw.$ Then, we have that $\overline{v} \cdot \overline{w}= \overline{vw}$ since $\ell(vw)=\ell(v)+\ell(w).$

   \underline{Case 2}. Suppose that $v=v_1 s_\alpha$ and $w=s_\alpha w_1$, for some $\alpha \in \Delta$ and $v_1,w_1 \in W$. Assume furthermore that $\ell(v)=\ell(v_1)+1$, $\ell(w)=\ell(w_1)+1$. Then 
   $$\begin{array}{rl}
      \overline{v} \cdot \overline{w}= & \overline{v_1} \cdot \overline{s_a}^2 \cdot  \overline{w_1} \\[0.5em]
      = & \bigl( v_1 \alpha \bigr)^\vee(-1) \cdot \overline{v_1} \cdot \overline{w_1}.
   \end{array}$$

 In general, it's a standard fact that 
 a reduced expression for $vw$ can be obtained from the concatenation of $\ii$ and $\jj$ by applying a finite number of the following two types of operations.

  \begin{itemize}
      \item[A.] Apply a braid move. That is, replace $s_\alpha s_\beta \cdots$ with $s_\beta s_\alpha \cdots$ (the number of factors in the previous products is a positive integer defining the relations of the Coxeter group $W$), for some simple roots $\alpha \neq \beta.$
      \item[B. ] Remove $s_\alpha s_\alpha$, for some $\alpha \in \Delta.$ 
  \end{itemize}

  Then, since the elements $\overline{s_\alpha}$, for $\alpha \in \Delta$, satisfy the braid relations, the proof of the general case follows by induction on $ \displaystyle \frac{\ell(v)+ \ell(w) - \ell(vw)}{2}$, using the two previously considered cases.
\end{proof}

The following lemma relies on  having chosen $\sl_2$-triples in $\lg$, compatibly with the ones of $\hlg$. (See Definition \ref{def:compatible sl2 triples} and Formulas \eqref{eq:ops sph rk min type 1},\eqref{eq:ops sph rk min type 2}). 

\begin{lemma}
    \label{lem: w hat bar in G hat}
    If $\hw \in \hW$, then $\overline{\hw} \in \hG$.
\end{lemma}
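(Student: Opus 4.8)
The plan is to reduce the claim to a statement about the generators $\overline{s_{\ha}}$ for simple roots $\ha \in \hDel$ and then verify it directly using the compatible pinning. First I would observe that it suffices to treat the case $\hw = s_{\ha}$ for a single simple root $\ha \in \hDel$: indeed, if $\widehat{\ii} = (\widehat i_m, \dots, \widehat i_1)$ is a reduced expression for $\hw$ in the Coxeter group $\hW$, then $\overline{\hw} = \overline{s}_{\widehat i_m} \cdots \overline{s}_{\widehat i_1}$ is a product of the $\overline{s_{\ha}}$'s, and $\hG$ is a subgroup, so closure under products is automatic. (One subtlety: the element $\overline{\hw}$ appearing in the statement is defined via a reduced expression of $\hw$ \emph{in $W$}, not in $\hW$; but by Lemma \ref{lem: compatible red expressions spherical}, a reduced expression of $\hw$ in $\hW$ refines to one in $W$, and since the $\overline{s_\alpha}$ satisfy the braid relations the resulting element is the same, so this causes no problem. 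Alternatively, by Lemma \ref{lem: v bar w bar}, $\overline{\hw}$ is well-defined up to the finite group $K$, and it is enough to hit $\hG$ for one choice of reduced word.)

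Next I would compute $\overline{s_{\ha}}$ for $\ha \in \hDel$ using the two cases of the pinning. If $\ha \in \hDel^1$, then by Formula \eqref{eq:ops sph rk min type 1} we have $x_\alpha(t) = x_\ha(t)$ and $x_{-\alpha}(t) = x_{-\ha}(t)$, so
$$
\overline{s_\alpha} = x_\alpha(-1) x_{-\alpha}(1) x_\alpha(-1) = x_\ha(-1) x_{-\ha}(1) x_\ha(-1),
$$
and the right-hand side is manifestly an element of $\hG$ (it is the analogue $\overline{s_\ha}$ computed inside $\hG$). If $\ha \in \hDel^2$, write $\bar\rho\inv(\ha) = \{\alpha_\pgg, \alpha_\ngg\}$ with the two roots strongly orthogonal; then by Formula \eqref{eq:ops sph rk min type 2}, $x_{\pm\alpha_\pgg}$ and $x_{\pm\alpha_\ngg}$ commute (strong orthogonality kills all the relevant brackets), and
$$
x_\ha(t) = x_{\alpha_\ngg}(t) x_{\alpha_\pgg}(t), \qquad x_{-\ha}(t) = x_{-\alpha_\ngg}(t) x_{-\alpha_\pgg}(t).
$$
Since $s_\ha = s_{\alpha_\pgg} s_{\alpha_\ngg} = s_{\alpha_\ngg} s_{\alpha_\pgg}$ by Lemma \ref{lem: reflections sph rk min}(2), a reduced expression for $s_\ha$ in $W$ is $(\alpha_\pgg, \alpha_\ngg)$ (or the other order), so
$$
\overline{s_\ha} = \overline{s_{\alpha_\pgg}}\,\overline{s_{\alpha_\ngg}} = \bigl(x_{\alpha_\pgg}(-1) x_{-\alpha_\pgg}(1) x_{\alpha_\pgg}(-1)\bigr)\bigl(x_{\alpha_\ngg}(-1) x_{-\alpha_\ngg}(1) x_{\alpha_\ngg}(-1)\bigr).
$$
Using the commutativity of the two $\SL_2$'s, I would rearrange this product to group the $\pgg$- and $\ngg$-factors in matching positions and recognize it as $x_\ha(-1) x_{-\ha}(1) x_\ha(-1)$ computed inside $\hG$ — equivalently, as the image under the homomorphism $\SL_2 \to \hG$ attached to the $\sl_2$-triple $(X_\ha, H_\ha, X_{-\ha})$ of the matrix $\begin{psmallmatrix} 0 & -1 \\ 1 & 0 \end{psmallmatrix}$. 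Either way the element lies in $\hG$.

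The main obstacle, and the only real content, is this last rearrangement in the $\hDel^2$ case: one must be careful that $\overline{s_{\alpha_\pgg}}$ and $\overline{s_{\alpha_\ngg}}$ genuinely commute (which follows from strong orthogonality of $\alpha_\pgg, \alpha_\ngg$, hence commutativity of the entire $\SL_2 \times \SL_2$ they generate) and that after commuting, the interleaved product $x_{\alpha_\pgg}(-1) x_{\alpha_\ngg}(-1) \cdot x_{-\alpha_\pgg}(1) x_{-\alpha_\ngg}(1) \cdot x_{\alpha_\pgg}(-1) x_{\alpha_\ngg}(-1)$ is exactly $x_\ha(-1) x_{-\ha}(1) x_\ha(-1)$ by Formula \eqref{eq:ops sph rk min type 2}. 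This is a short computation once the commutativity is in hand. Everything else is bookkeeping: the reduction to simple roots, the invocation of Lemma \ref{lem: compatible red expressions spherical} to pass between reduced words in $\hW$ and in $W$, and the remark that $\hG$ being a subgroup closes the argument under products.
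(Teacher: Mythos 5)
Your proposal is correct and follows essentially the same route as the paper: reduce to a single simple root $\ha \in \hDel$ via Lemma \ref{lem: compatible red expressions spherical}, handle $\hDel^1$ directly from \eqref{eq:ops sph rk min type 1}, and in the $\hDel^2$ case use strong orthogonality of $\alpha_\pgg, \alpha_\ngg$ to rearrange $\overline{s_{\alpha_\ngg}}\,\overline{s_{\alpha_\pgg}}$ into $x_\ha(-1)x_{-\ha}(1)x_\ha(-1)$ via \eqref{eq:ops sph rk min type 2}. Your extra remark about the reduced expression being taken in $W$ rather than $\hW$ is exactly the point the paper delegates to Lemma \ref{lem: compatible red expressions spherical}.
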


\begin{proof}
Recall that, for any $\alpha \in \Delta$, then $\overline{s_\alpha}=x_\alpha(-1) x_{-\alpha}(1) x_\alpha(-1)$. 
From the previous expression and \eqref{eq:ops sph rk min type 1}, we deduce that if $\alpha \in \Delta^1$, then $\overline{s_\alpha} \in \hG.$ If $\ha \in \hDel^2$ and $\bar \rho \inv (\ha)= \{\alpha_\pgg, \alpha_\ngg\}$, using the strong orthogonality of $\alpha_\ngg$ and $\alpha_\pgg$ and Lemma \ref{lem: reflections sph rk min}, we compute that 

\begin{equation*}
    \begin{array}{rl}
     \overline{s_\ha}= \overline{s_{\alpha_\ngg} s_{\alpha_\pgg}}=   &  x_{\alpha_\ngg}(-1) x_{-\alpha_\ngg}(1) x_{\alpha_\ngg}(-1)x_{\alpha_\pgg}(-1) x_{-\alpha_\pgg}(1) x_{\alpha_\pgg}(-1)  \\
        = & x_{\alpha_\ngg}(-1) x_{\alpha_\pgg}(-1) x_{-\alpha_\ngg}(1)x_{-\alpha_\pgg}(1) x_{\alpha_\ngg}(-1)   x_{\alpha_\pgg}(-1) \\
        = & x_\ha(-1) x_{-\ha}(1) x_\ha(-1).
    \end{array}
\end{equation*}
The last equality follows from \eqref{eq:ops sph rk min type 2} and immediately implies that $\overline{s_\ha} \in \hG$. The statement then follows from Lemma \ref{lem: compatible red expressions spherical}.
\end{proof}

\begin{coro}
\label{cor: hw tilde almost in hG}
    If $\hw \in \hW$, then $\widetilde{\hw} \in \hG K \cap K \hG$.
\end{coro}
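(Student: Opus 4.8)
The plan is to show that, modulo the finite group $K$, the element $\widetilde{\hw}$ agrees with $\overline{\hw}$, and then to invoke Lemma \ref{lem: w hat bar in G hat} which gives $\overline{\hw}\in\hG$. First I would record a structural observation: since $K$ is a subgroup of $T$, it is contained in $N_G(T)$, and since it is moreover normalised by $N_G(T)$ it is in fact a normal subgroup of $N_G(T)$. Hence $N_G(T)/K$ is a group, and Lemma \ref{lem: v bar w bar} says precisely that $v\longmapsto\overline{v}K$ defines a group homomorphism $W\longto N_G(T)/K$: the inclusion $\overline{v}\cdot\overline{w}\in\overline{vw}K$ becomes the identity $\overline{v}K\cdot\overline{w}K=\overline{vw}K$, and $(\overline{v})\inv\in\overline{v\inv}K$ becomes $(\overline{v}K)\inv=\overline{v\inv}K$.

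Applying this homomorphism to the defining expression $\widetilde{\hw}=\overline{y}\cdot\overline{y\inv\hw y}\cdot(\overline{y})\inv$ from \eqref{eq: w tilde spherical}, I would compute
$$
\widetilde{\hw}\,K=\bigl(\overline{y}K\bigr)\cdot\bigl(\overline{y\inv\hw y}\,K\bigr)\cdot\bigl(\overline{y}K\bigr)\inv=\overline{\,y\cdot\bigl(y\inv\hw y\bigr)\cdot y\inv\,}\;K=\overline{\hw}\,K,
$$
so that $\widetilde{\hw}\in\overline{\hw}K$. Since $\overline{\hw}$ represents $\hw\in\hW\subseteq W$ it lies in $N_G(T)$, and as $K$ is normal there we also get $\overline{\hw}K=K\overline{\hw}$; thus $\widetilde{\hw}\in\overline{\hw}K\cap K\overline{\hw}$. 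Lemma \ref{lem: w hat bar in G hat} then gives $\overline{\hw}\in\hG$, and the conclusion $\widetilde{\hw}\in\hG K\cap K\hG$ follows.

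This argument is entirely a matter of bookkeeping of cosets, so I do not anticipate a real obstacle; the only genuine input is Lemma \ref{lem: w hat bar in G hat} (itself resting on the compatibility of the chosen $\sl_2$-triples), and the single point that must be stated with care is that $K$ is normal — not merely normalised — in $N_G(T)$, which is exactly what makes the two-sided membership $\hG K\cap K\hG$ come for free rather than requiring a separate argument.
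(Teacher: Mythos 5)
Your argument is correct and is exactly the one the paper has in mind: it declares the corollary ``obvious'' from Lemma \ref{lem: v bar w bar}, Lemma \ref{lem: w hat bar in G hat} and the definition of $\widetilde{\hw}$, which is precisely the coset bookkeeping you carry out. Your reformulation of Lemma \ref{lem: v bar w bar} as saying that $v\longmapsto\overline{v}K$ is a homomorphism $W\longto N_G(T)/K$, together with the normality of $K$ in $N_G(T)$, is a clean way to make the ``obvious'' step explicit.
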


\begin{proof}
    Obvious from Lemmas \ref{lem: v bar w bar}, \ref{lem: w hat bar in G hat} and the definition of $\widetilde{\hw}.$
\end{proof}

\begin{coro}
\label{cor: sph minors on e}
   Let $\lambda \in \sphweight$ and $ w \in W$. The following holds.
   \begin{enumerate}
       \item If $\rho (w y \lambda^*) \neq 0$, then $\Delta^\lambda_w(e)=0.$
       \item If $\hw \in \hW$, then $\Delta^\lambda_{\hw}(e) \in \{\pm 1\}.$
   \end{enumerate}
\end{coro}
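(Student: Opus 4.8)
The plan is to evaluate the defining formula $\Delta^\lambda_w(e)=\varphi_\lambda\bigl((\widetilde w)^{-1} v_\lambda\bigr)$ using the weight-vector description of both $\varphi_\lambda$ and the extremal vector $\widetilde w\cdot\varphi_\lambda$, together with the $\hG$-invariance of $v_\lambda$. For statement (1), observe that $\widetilde w\cdot\varphi_\lambda$ is a weight vector of weight $wy\lambda^*$ in $V(\lambda)^*$, so $\Delta^\lambda_w(e)=(\widetilde w\varphi_\lambda)(v_\lambda)$ is a pairing between a $T$-weight vector of weight $wy\lambda^*$ and the vector $v_\lambda$. Now $v_\lambda\in V(\lambda)^{\hG}$, hence in particular $v_\lambda$ is $\hT$-invariant, i.e.\ it lies in the zero $\hT$-weight space: $\rho$ applied to any $T$-weight occurring in the $T$-weight-space decomposition of the line $\CC v_\lambda$ must vanish. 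Dualizing, the pairing $(\widetilde w\varphi_\lambda)(v_\lambda)$ can only be nonzero if $\rho(wy\lambda^*)=0$; contrapositively, $\rho(wy\lambda^*)\neq 0$ forces $\Delta^\lambda_w(e)=0$. (One must be slightly careful that $V(\lambda)^*$ is given the contragredient action, so the relevant weight of $v_\lambda$ viewed inside $V(\lambda)$ and the weight of $\widetilde w\varphi_\lambda$ inside $V(\lambda)^*$ pair nontrivially only when they are opposite as $T$-weights and thus have the same image under $\rho$ up to sign — either way the condition to check is that $\rho$ of the weight is zero.)

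For statement (2), let $\hw\in\hW$. By Corollary \ref{cor: hw tilde almost in hG}, $\widetilde{\hw}=\hg k$ for some $\hg\in\hG$ and $k\in K$, where $K\subseteq T$ is the finite group generated by the $\alpha^\vee(-1)$. Then
$$\Delta^\lambda_{\hw}(e)=\varphi_\lambda\bigl((\widetilde{\hw})^{-1}v_\lambda\bigr)=\varphi_\lambda\bigl(k^{-1}\hg^{-1}v_\lambda\bigr)=\varphi_\lambda\bigl(k^{-1}v_\lambda\bigr),$$
using $\hg^{-1}v_\lambda=v_\lambda$ since $v_\lambda$ is $\hG$-invariant. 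Now $k^{-1}\in T$ acts on the line $\CC v_\lambda$ by the scalar $(\text{weight of }v_\lambda)(k^{-1})$, which is a product of values of characters on elements $\alpha^\vee(-1)$, hence a sign $\pm1$; call it $\eta\in\{\pm1\}$. Therefore $\Delta^\lambda_{\hw}(e)=\eta\,\varphi_\lambda(v_\lambda)=\eta\cdot 1=\pm1$, where we used the normalization $\varphi_\lambda(v_\lambda)=1$ from the paragraph defining $\varphi_\lambda$.

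The only genuinely delicate point is bookkeeping the $T$-weights: one has to be consistent about whether one works inside $V(\lambda)$ or $V(\lambda)^*$, and about the fact that $\varphi_\lambda$ has weight $y\lambda^*$ so that $\widetilde w\varphi_\lambda$ has weight $wy\lambda^*$ (this is recorded right after Definition \ref{def:sph minors} and again in Lemma \ref{lem:characterisation generalised spherical minors}(3)). Once the weight of $\widetilde w\varphi_\lambda$ is pinned down, statement (1) is immediate from $\hT$-invariance of $v_\lambda$ applied through $\rho$, and statement (2) is immediate from Corollary \ref{cor: hw tilde almost in hG} together with the observation that $K$ acts on any $T$-weight line by a sign. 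I do not anticipate any serious obstacle beyond this weight bookkeeping.
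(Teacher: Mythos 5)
Your argument for statement (1) is correct and is essentially the paper's proof recast in weight-vector language: the paper applies Lemma \ref{lem:characterisation generalised spherical minors} (right $\hG$-invariance plus left $T$-equivariance of $\Delta^\lambda_w$) to an $\widehat t\in\hT$ with $(wy\lambda^*)(\widehat t)\neq 1$, which is precisely the statement that $\widetilde w\varphi_\lambda$ has weight $wy\lambda^*$ while $v_\lambda$ is $\hT$-invariant.

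For statement (2) there is a genuine gap. You write that $k^{-1}\in T$ acts on the line $\CC v_\lambda$ by the scalar ``$(\text{weight of } v_\lambda)(k^{-1})$''. But $v_\lambda$ is \emph{not} a $T$-weight vector in general: it is only $\hT$-invariant, i.e.\ of $\hT$-weight $0$, and since $\rho\colon X(T)\to X(\hT)$ has nontrivial kernel the $\hT$-weight-$0$ subspace of $V(\lambda)$ decomposes into several distinct $T$-weight spaces. (Example \ref{ex: sp sl pt 5} already shows this for $(\SL_4,\Sp_4)$: $v_{\varpi_2}=e_1\wedge e_4+e_2\wedge e_3$ mixes the $T$-weights $\varepsilon_1+\varepsilon_4$ and $\varepsilon_2+\varepsilon_3$.) So $k^{-1}v_\lambda$ need not be proportional to $v_\lambda$, and the claimed scalar action is unjustified. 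The repair is to move $k^{-1}$ to the side where you genuinely have a $T$-weight vector: $\varphi_\lambda$ has weight $y\lambda^*$, so $\varphi_\lambda(k^{-1}v_\lambda)=(k\cdot\varphi_\lambda)(v_\lambda)=(y\lambda^*)(k)\,\varphi_\lambda(v_\lambda)=(y\lambda^*)(k)\in\{\pm 1\}$; equivalently, only the $T$-weight-$(-y\lambda^*)$ component of $v_\lambda$ contributes to the pairing. This is exactly what the paper does, phrased functionally: it rewrites $\Delta^\lambda_{\hw}(e)=\Delta^\lambda_e(k^{-1}\hg^{-1})$ and uses parts (1) and (3) of Lemma \ref{lem:characterisation generalised spherical minors} to get $(-y\lambda^*)(k^{-1})$.
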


\begin{proof}
    If $\rho (w y \lambda^*) \neq 0$, let $\widehat t \in \hT$ such that $w y \lambda^*(\widehat t) \neq 1$. 
    Using statements one and three of Lemma \ref{lem:characterisation generalised spherical minors}, we deduce that 
    $$\bigl(-w y \lambda^*\bigr) (\widehat t)  \Delta^\lambda_w(e)=  \Delta^\lambda_w( \widehat t )= \Delta^\lambda_w(e).$$
    Hence, $\Delta^\lambda_w(e)=0.$ In the second case, using Corollary \ref{cor: hw tilde almost in hG}, we can write $\widetilde{\hw}=  \hg k$ for some $\widehat g \in \hG$ and $k \in K$. Then, again by Lemma \ref{lem:characterisation generalised spherical minors}, we compute that 
    $$\Delta^\lambda_{\hw}(e)= \Delta^\lambda_e(k\inv \widehat g \inv)= (-y \lambda^*)(k \inv).$$
    But for any $\nu \in X(T)$, $\nu(K) \subseteq \{\pm 1\}$ because of the definition of $K.$
\end{proof}

\begin{remark}
    In the notation of Corollary \ref{cor: sph minors on e}, it may happen that $\Delta^\lambda_{\hw}(e)=-1$. An example is given in Example \ref{ex:relations minors sl sp n=2}.
\end{remark}

\subsubsection{On the choice of \texorpdfstring{$z$}{z}}

In the previous section, we made a choice of an element $z \in Z_m$. This choice affects the elements $\widetilde w$, for $w \in W$, on which we rely to define the generalised spherical minors. We prove that the generalised spherical minors actually do not depend, up to a hard to predict sign, on the choice of $z$.

\bigskip

Recall that $y=z\inv$. Let $z' \in Z_m$ and $y':= (z')\inv$. For $w \in W$, let $\widetilde{w}'= \overline{ y'} \cdot \overline{ (y')\inv w y' } \cdot \bigl(\overline{y'}\bigr)^{-1}.$ 
Let $\lambda \in \sphweight$. We denote by $D^\lambda_w$ the generalised spherical minor, associated to $\lambda$ and $w$, defined with respect to $z'$. As in the previous section, $\Delta^\lambda_w$ denotes the minor defined with respect to $z$. 

\begin{lemma}
    \label{lem:sph minors independent z}
    Let $w, v \in W$ and $\lambda, \mu \in \sphweight$ such that $wy \lambda^*=vy'\mu^*$. Then $\Delta^\lambda_w= \pm D^\mu_v$.
\end{lemma}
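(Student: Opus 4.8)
The plan is to use the characterisation of generalised spherical minors given in Lemma \ref{lem:characterisation generalised spherical minors}, which pins down $\Delta^\lambda_w$ (up to the normalisation $\Delta^\lambda_w(\widetilde w)=1$) by its right $\hG$-invariance, left invariance under $wyUy\inv w\inv$, and its $T$-weight $-wy\lambda^*$ under left translation. First I would compare the two unipotent groups appearing in the left-invariance condition: I claim $wyUy\inv w\inv = vy'U(y')\inv v\inv$. Since $z, z' \in Z_m$ both make $B$-orbits open, the groups $yUy\inv$ and $y'U(y')\inv$ are both unipotent radicals of Borel subgroups containing $T$ sharing the relevant positivity data; more precisely $yUy\inv$ is determined by the set of roots $y\Phi^+$, and the hypothesis $wy\lambda^* = vy'\mu^*$ together with the fact that $\lambda,\mu$ are spherical (so $\rho(y\lambda)=\rho(y'\mu)=0$ by \eqref{eq: expression for spherical weights}) should force the relevant coincidence of the two positive systems after twisting by $w$ and $v$ respectively — this is essentially the statement that $wy$ and $vy'$ differ by an element of $\hW$ acting trivially in the appropriate sense, combined with Lemma \ref{lem: reflections sph rk min}. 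This is the step I expect to be the main obstacle, and it is where the minimal-rank hypothesis and the structure of $Z_m$ really enter.

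\textbf{Reduction via weights and the key coincidence.} Granting that the left- and right-invariance conditions for $\Delta^\lambda_w$ and $D^\mu_v$ agree, and that their $T$-weights under left translation agree (which is exactly the hypothesis $wy\lambda^*=vy'\mu^*$), both functions lie in the same one-dimensional space of $(wyUy\inv w\inv, T, \hG)$-eigenfunctions inside $\CC[G]$ — one-dimensionality following because $B'\hG$ is open in $G$ for the appropriate Borel $B'$, exactly as in the proof of existence and uniqueness of $\varphi_\lambda$ in Section \ref{sec:fund identities}... (rather, in the discussion preceding Definition \ref{def:sph minors}). Hence $\Delta^\lambda_w = c\, D^\mu_v$ for some $c \in \CC^*$. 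To see that $c = \pm 1$ I would evaluate at a convenient point. The natural choice is the identity $e$: by Corollary \ref{cor: sph minors on e}, if $\rho(wy\lambda^*)\neq 0$ then both minors vanish at $e$, which is useless, so instead I would evaluate at an element of $N_G(T)$ representing the Weyl group element that transports one normalisation point to the other. Concretely, both $\widetilde w$ (defined via $z$) and $\widetilde v'$ (defined via $z'$) represent the same coset question, and $\widetilde w (\widetilde v')\inv$ lands in $N_G(T)$, in fact in $T\cdot K$ — here Lemma \ref{lem: v bar w bar} is the essential tool, giving $\overline{u}\,\overline{u'} \in \overline{uu'}K$ and hence that products and inverses of the $\overline{\,\cdot\,}$ elements differ from the "expected" representatives only by the finite 2-torsion group $K$.

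\textbf{Pinning down the sign.} So I would compute $c = D^\mu_v(\widetilde w)^{-1}$, or equivalently $\Delta^\lambda_w(\widetilde v')$, using the cocycle-type properties in Lemma \ref{lem:characterisation generalised spherical minors}: write $\widetilde w = \widetilde v' \cdot k \cdot t$ with $t \in T$, $k$ a product of the relevant $\overline{s_\delta}^{\,2} = \delta^\vee(-1) \in K$ factors coming from the discrepancy between the two reduced-expression conventions (this is exactly Lemma \ref{lem: v bar w bar} applied to $\overline{y}$ versus $\overline{y'}$ and to the interior words), and use property (3) of Lemma \ref{lem:characterisation generalised spherical minors} to pull the $T$-part out as a character value $(-wy\lambda^*)(t)$, together with property (4) $\Delta^\lambda_w(\widetilde w)=1$. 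Since every element of $K$ is $2$-torsion and $wy\lambda^* \in X(T)$, all character values that appear lie in $\{\pm1\}$ — exactly the mechanism already used in the proof of Corollary \ref{cor: sph minors on e}. Therefore $c \in \{\pm 1\}$, giving $\Delta^\lambda_w = \pm D^\mu_v$ as claimed. The only genuinely delicate point remains the identification of the two unipotent radicals in the first paragraph; everything after that is bookkeeping with $K$ and weights. \qed
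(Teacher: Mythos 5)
There are two genuine gaps, one in each half of your argument. First, the claimed equality $wyUy\inv w\inv = vy'U(y')\inv v\inv$ is false in general. Writing $y'=\hw y$ with $\hw\in\hW$ (which exists by \cite{francone2023multiplicity}[Lemma 3]), the hypothesis $wy\lambda^*=vy'\mu^*$ only forces $\lambda=\mu$ and $(v\hw)\inv w\in W_{y\lambda^*}$; it does not force $w=v\hw$, which is what equality of those unipotent radicals would require. (Already in the diagonal case with $y=y'$, $w=e$ and $v=(s_{\ha'},e)\in W_{y\lambda^*}$ nontrivial, the two groups differ.) The proportionality $\Delta^\lambda_w=c\,D^\mu_v$ is still true, but the correct mechanism — the one the paper uses — is that under \eqref{eq:spherical peter weyl} both functions come from weight vectors of $V(\lambda^*)$ of the \emph{extremal} weight $wy\lambda^*$, whose weight space is one-dimensional. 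Your "one-dimensional eigenspace" sentence is premised on $D^\mu_v$ being $wyUy\inv w\inv$-invariant, which you have not established; the extremal-weight observation is what supplies it.

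Second, the sign computation does not go through as written. Since $w\neq v$ in general, $\widetilde w$ and $\widetilde v'$ represent different classes in $N_G(T)/T$, so $\widetilde w(\widetilde v')\inv$ does not lie in $T$ (let alone $T\cdot K$), and there is no decomposition $\widetilde w=\widetilde v'\,k\,t$. What one actually gets from Lemma \ref{lem: v bar w bar} is $c=\Delta^\lambda_w(\widetilde v')=(\pm1)\cdot\Delta^\lambda_{v\inv w}(e)$, and the whole difficulty is to show $\Delta^\lambda_{v\inv w}(e)=\pm1$: for an arbitrary $u\in W$ there is no reason for $\Delta^\lambda_u(e)$ to be a sign (by Corollary \ref{cor: sph minors on e} it is often $0$). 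The missing ingredients are precisely $y'=\hw y$ with $\hw\in\hW$, which gives $v\inv w\,y\lambda^*=\hw y\lambda^*$, hence $\Delta^\lambda_{v\inv w}=\Delta^\lambda_{\hw}$ by Corollary \ref{cor:sph minors only depend on weights}, and then $\Delta^\lambda_{\hw}(e)\in\{\pm1\}$ by Corollary \ref{cor: sph minors on e}, which in turn rests on $\overline{\hw}\in\hG$ (Lemma \ref{lem: w hat bar in G hat}, i.e.\ the compatible pinning) together with the right $\hG$-invariance of the minor. Your proposal never invokes the right $\hG$-invariance or the passage through $\hW$ in the evaluation step, and without them the constant $c$ cannot be pinned to $\pm1$.
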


\begin{proof}
    Clearly, we have that $\lambda=\mu.$ Both $\Delta^\lambda_w$ and $D^\lambda_v$ are the image, via the map \eqref{eq:spherical peter weyl}, of a weight vector of $V(\lambda^*)$ of weight $wy\lambda^*.$ Being $wy\lambda^*$ an extremal weight of $V(\lambda^*)$, it follows that $\Delta^\lambda_w= c D^\lambda_v$ for some $c \in \CC^*$. Using Lemma \ref{lem:characterisation generalised spherical minors}, we deduce that $c= \Delta^\lambda_w( \widetilde{v}').$
By Lemma \ref{lem: v bar w bar}, we deduce that there exist $k \in K$ such that
    $$ 
    \begin{array}{rl}
        \Delta^\lambda_w( \widetilde{v}')=  & \Delta^\lambda_e \bigl( (\widetilde{w})\inv \widetilde{v}' \bigr)  \\[0.5 em]
        =  & \Delta^\lambda_e\biggl( \biggl(\widetilde{v \inv w}\biggr)^{-1} k\biggr)\\[0.8em]
        = & \Delta_{v\inv w}(k)\\[0.5em]
        = & (-v\inv w y \lambda^*)(k) \Delta^\lambda_{v\inv w}(e).
    \end{array}$$
    Then, it is sufficient to prove that $\Delta^\lambda_{v\inv w}(e) \in \{\pm 1 \}.$
    By \cite{francone2023multiplicity}[Lemma 3], there exists a $\hw \in \hW$ such that $y'= \hw y.$ In particular $$v\inv w y \lambda^*= \hw y \lambda^*.$$
    Using Corollaries \ref{cor:sph minors only depend on weights} and \ref{cor: sph minors on e}, we deduce that
$$
\Delta^\lambda_{v\inv w}(e)= \Delta^\lambda_{\hw}(e) \in \{\pm 1\}.
$$
\end{proof}

\section{Proof and description of Theorem \ref{thm:fundamental identities sph mnors}}
\label{sec:fund identities}

 
Sections \ref{sec:diag} to \ref{sec:G2} correspond to the spherical pairs of minimal rank of Theorem \ref{theo:list sph rank min}.
Each section contains a detailed description of the element $z$ with respect to which the generalised spherical minors appearing in Theorem \ref{thm:fundamental identities sph mnors} are defined, along with the description of the matrix $A$, the elements $\hw_b$ and the sets $U_b$. 
In each section we complete the proof of Theorem \ref{thm:fundamental identities sph mnors} for the corresponding pairs. 
The proof of Theorem \ref{thm:fundamental identities sph mnors} begins with a general reduction argument.
In Sections \ref{sec:diag} to \ref{sec:G2}, we extensively use the notation of \cite{Bourb} for the root systems.
 

\subsection{Some reductions}
Let $(G,\hG)$ be one of the pairs of Theorem \ref{theo:list sph rank min} and $b \in \Bc$. The notation $z,A, \hw_b$ and $U_b$ stand for the elements defined in the corresponding section below.
Consider the spherical weight
\begin{equation}
    \label{eq:lambda b sph minors}
    \lambda_b:= \biggl( \sum_{b' \in \Bc \setminus \{b\}} -a_{b,b'}b' \biggr) \in \sphweight.
\end{equation}
The identity \eqref{eq:fundamental iddentity sh minors} can be rephrased as

\begin{equation}
\label{eq:fund identity sph minors versione branching}
 \sum_{u \in U_b} (-1)^{\ell(y\inv u y)} \Delta^b_{wu} \Delta^b_{w u \hw_b} = \epsilon_b \Delta^{\lambda_b}_w.
\end{equation}

 Recall that if $\lambda$ is a spherical weight, and $v, v' \in W$ satisfy $\ell(y\inv v v' y)= \ell(y\inv v y) + \ell(y\inv v' y)$, then $\Delta^\lambda_{vv'}= \widetilde{v} \cdot \Delta^{\lambda}_{v'}$.
 Hence, for proving Theorem \ref{thm:fundamental identities sph mnors} it is sufficient to assume that $w=e$.
 We set
\begin{equation}
    \label{eq:fb spheric minors}
    f_b =   \sum_{u \in U_b} (-1)^{\ell(y\inv u y)} \Delta^b_{u} \Delta^b_{ u \hw_b} \in \CC[G]^\hG.
\end{equation}
  Using Lemma \ref{lem:characterisation generalised spherical minors}, we deduce that  the following 3 claims imply \eqref{eq:fund identity sph minors versione branching}, from which Theorem \ref{thm:fundamental identities sph mnors} follows.

\bigskip

    \noindent \underline{Claim 1:} For any $\delta \in y \Delta$, we have that  $Z_\delta \cdot f_b=0.$

\bigskip

    \noindent \underline{Claim 2':} For any $u \in U_b$, we have that $ u y b^* + u \hw_b y b^* = y \lambda_b^*.$
    
    \noindent Moreover, in all cases we verify that $U_b \subseteq W_{y \lambda_b^*}$. Hence, Claim 2' is equivalent to Claim 2 below.

    \bigskip

    \underline{Claim 2:} $ \quad yb^* + \hw_b y b^*= y \lambda_b^*.$

    \bigskip

    \noindent \underline{Claim 3':} $ \quad f_b(e) \in \{\pm 1\}.$ 
    
    \noindent Moreover, in all cases we verify that  $\hw_b \in \hW$. 
    Therefore, we have that $\Delta^b_e\Delta^b_{\hw_b}(e) \in \{ \pm 1\}$ by Corollary \ref{cor: sph minors on e}. Hence, again by Corollary \ref{cor: sph minors on e}, Claim 3' is implied by Claim 3 below.

    \bigskip

    \underline{Claim 3} For any $u \in U_{b} \setminus \{e\}$, we have that  $\rho(u y b^*) \neq 0$ or $\rho(u \hw_b y b^*) \neq 0.$

\begin{remark}
    Claim 3 implies that $\epsilon_b= \Delta^b_{\hw_b}(e).$
\end{remark}

 In the diagonal case ($G= \hG \times \hG$), we don't need the above reductions.
 Indeed, we prove that generalised spherical minors for the diagonal case coincide with Fomin-Zelevinsky generalised minors. Theorem \ref{thm:fundamental identities sph mnors} is then deduced from \cite{fomin1999double}[Theorem 1.17]. Of course, it is possible to prove Theorem \ref{thm:fundamental identities sph mnors} by proving the above claims, but this would recover the exact same proof of Fomin and Zelevinsky. That's why we prefer to deduce Theorem \ref{thm:fundamental identities sph mnors}, for $G=\hG \times \hG$, from \cite{fomin1999double}[Theorem 1.17].
 Note also that if $G=\hG$, Theorem \ref{thm:fundamental identities sph mnors} is empty. In the other cases of Theorem \ref{theo:list sph rank min}, we prove claims 1 to 3, from which Theorem \ref{thm:fundamental identities sph mnors} follows. 

\begin{remark}
    The proof of Claim 1 heavily exploits the following assertion, which is proved by case by case calculations. It would be interesting to have a conceptual general argument for it.

\bigskip 

    \underline{Assertion:} Let $(G,\hG)$ be as in Theorem \ref{theo:list sph rank min}, $b \in \Bc$ and $\delta \in y \Delta$. For any $v \in U_b \cup U_b \cdot\hw_b$ such that $v \inv \delta \in y \Phi^-$, we have  $\langle v y b^* \, , \, \delta^\vee \rangle = -1$ and $s_\delta v \in  U_b \cup U_b \cdot \hw_b $.
\end{remark}

\begin{remark}
Recall the decomposition of the ring $\CC[G]^\hG$ into $G$-modules given by \eqref{eq:spherical peter weyl}. 
Let $\lambda, \mu \in \sphweight$.
We can define $V(\lambda) \cdot V(\mu)$ as the $G$-submodule of $\CC[G]^\hG$ generated by the products $fg$ for $f \in V(\lambda)$ and $g \in V(\mu).$ 
The problem of  understanding the decomposition of $V(\lambda) \cdot V(\mu)$, for the various $\lambda, \mu \in \sphweight$, into irreducible $G$-modules is studied in detail in \cite{bravi2023multiplication}.    Because of \eqref{eq:fund identity sph minors versione branching}, Theorem \ref{thm:fundamental identities sph mnors} implies that $V(\lambda_b^*)$ is a component of $V(b^*) \cdot V(b^*).$ 
This obviously implies that $V(\lambda_b^*)$ appears in the $G$-module decomposition of $V(b^*) \otimes V(b^*).$
\end{remark}
 \subsection{The diagonal case}
\label{sec:diag}
 We consider a simple, simply connected group $\hG$ diagonally embedded in $G=\hG \times \hG.$ Let $B= \hB \times \hB$ and $T=\hT \times \hT$. We have that $W=\hW \times \hW$
  and we set
  $$
  z:= (\hw_0, e)= y \in Z_m.
  $$
  Therefore, 
  $$
  y \Delta=\bigl \{ (-\ha, 0), (0, \ha) \, : \, \ha \in \hDel \bigr \} \quad \text{and} \quad \Bc= \bigl \{ (\varpi_{\ha}, \varpi_{\ha}^*) \, : \, \ha \in \hDel \bigr \}.
  $$

\bigskip

For $\ha \in \hDel$, let $b_{\ha}= (\piha, \piha^*).$ 
The Cartan matrix $A$ of the small root system coincides with the Cartan matrix of $\hG$. That is 
$$
a_{b_{\ha}, b_{\hb}}:= \langle \hb , \ha^\vee \rangle.
$$
We set 

$$
\hw_{b_{\ha}}:= (s_\ha, s_\ha) \quad \text{and} \quad U_{b_\ha}:= \bigl\{ e , (s_\ha, e) \bigr \}.
$$

  \begin{lemma}
      \label{lem:tilde equals bar tensor}
      For any $v \in W$, we have $\widetilde{v}= \overline{v}$.
  \end{lemma}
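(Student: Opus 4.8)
The plan is to trace through the definition of $\widetilde v$ in the diagonal case and show that the conjugations by $\overline y$ collapse. Recall that $\widetilde v = \overline{y} \cdot \overline{y^{-1} v y} \cdot \overline{y}^{-1}$, so it would suffice to prove that $\overline{y}\cdot\overline{y^{-1}vy} = \overline{v}\cdot\overline{y}$, i.e. that the map $v \mapsto \overline{y^{-1}vy}$ is intertwined by $\overline{y}$ with $v\mapsto \overline v$. Since $y = (\hw_0, e)$, conjugation by $y$ on $W = \hW\times\hW$ sends $(a,b)$ to $(\hw_0 a \hw_0, b)$, and the key point is that this is a \emph{length-respecting} Coxeter automorphism: it permutes the simple reflections $(s_{\ha},e)\mapsto (s_{\hw_0(-\ha)}, e)$ (using that $\hw_0$ sends $\hDel$ to $-\hDel$ and then $-\hw_0$ permutes $\hDel$) and fixes the $(e, s_{\ha})$.

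The first step is therefore to record that for a simple reflection $s_\delta$ with $\delta \in y\Delta$ one has $\overline{y}\cdot\overline{y^{-1}s_\delta y}\cdot\overline{y}^{-1} = \widetilde{s_\delta}$ by definition, and to compute $\widetilde{s_\delta}$ directly from the compatible pinning: for $\delta = (-\ha,0)$ or $\delta=(0,\ha)$ the associated $\sl_2$-triple $(Z_\delta, H_\delta, Z_{-\delta}) = \overline y\cdot(X_\ha, H_\ha, X_{-\ha})$-type data gives $\widetilde{s_\delta} = z_\delta(-1)z_{-\delta}(1)z_\delta(-1)$, and via $z_{\pm\delta}(t) = \overline y\, x_{\pm\alpha}(t)\,\overline y^{-1}$ this equals $\overline y\,\overline{s_\alpha}\,\overline y^{-1}$. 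So on each simple reflection the two sides agree: $\widetilde{s_\delta} = \overline{y}\,\overline{y^{-1}s_\delta y}\,\overline{y}^{-1}$, and I must check this in turn equals $\overline{s_\delta}$ — but $\overline{s_\delta}$ for $\delta\in y\Delta$ is $z_\delta(-1)z_{-\delta}(1)z_\delta(-1)$ only if $\overline{s_\delta}$ is defined via the pinning attached to $y\Delta$; here $\overline{\cdot}$ is defined via the $\Delta$-pinning of $G = \hG\times\hG$, which for $\delta = (0,\ha)$ literally is $\overline{s_\delta}$ in the usual sense, while for $\delta = (-\ha, 0)$ one uses that $\overline{s_{\ha}}$ and $\overline{s_{-\ha}}$ differ by an element of $K$ — so some care with signs in $K$ is needed, but in the product $z_\delta(-1)z_{-\delta}(1)z_\delta(-1)$ these cancel.

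Granting the base case on simple reflections, the second step is an induction on $\ell(v)$ (length in $W$, which coincides with $\ell(y^{-1}vy)$ since conjugation by $y$ is length-preserving here). Pick a reduced expression $v = s_{\delta_1}\cdots s_{\delta_k}$ with $\delta_i \in y\Delta$; then $y^{-1}vy = s_{\alpha_1}\cdots s_{\alpha_k}$ is reduced in the $\Delta$-Coxeter structure, so $\overline{y^{-1}vy} = \overline{s_{\alpha_1}}\cdots\overline{s_{\alpha_k}}$ and $\widetilde v = \overline y\,\overline{s_{\alpha_1}}\,\overline y^{-1}\cdot\overline y\,\overline{s_{\alpha_2}}\,\overline y^{-1}\cdots = \widetilde{s_{\delta_1}}\cdots\widetilde{s_{\delta_k}}$, which is the displayed multiplicativity recalled after Definition \ref{def:sph minors}. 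By the base case each $\widetilde{s_{\delta_i}} = \overline{s_{\delta_i}}$, and since the $\overline{s_{\delta}}$ satisfy the braid relations, $\overline{s_{\delta_1}}\cdots\overline{s_{\delta_k}} = \overline v$. Hence $\widetilde v = \overline v$.

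The main obstacle I anticipate is bookkeeping with the finite group $K \subseteq T$: $\overline{s_{-\ha}}$ is not literally $\overline{s_\ha}$ but differs by $\ha^\vee(-1)$, so one has to be careful that the identification $\widetilde{s_\delta} = \overline{s_\delta}$ holds on the nose (not just up to $K$) for the simple reflections $\delta = (-\ha,0)$ coming from the $\hw_0$-twisted factor. The cleanest route is probably to observe that $\overline{y}\cdot\overline{y^{-1}s_\delta y}\cdot\overline{y}^{-1}$ and $\overline{s_\delta}$ are both of the form $z_\delta(-1)z_{-\delta}(1)z_\delta(-1)$ for the \emph{same} root vectors $Z_{\pm\delta}$ — this is exactly the content of the compatible-pinning normalization $Z_{y\alpha} = \overline y\cdot X_\alpha$ together with \eqref{eq:ops sph rk min type 1} and the fact that in $\hG\times\hG$ all roots are of ``type 1'' — so the $K$-ambiguity never enters. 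Alternatively one could note $y$ and $y^{-1}$ both lie in $Z_m$ only in a degenerate sense and avoid $K$ entirely by working with the explicit product formula for $\widetilde v$ above.
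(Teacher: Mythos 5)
Your overall strategy — reduce to simple reflections by exploiting that conjugation by $y=(\hw_0,e)$ is a length-preserving Coxeter automorphism, verify each simple reflection, then conclude by multiplicativity of $\widetilde{\,\cdot\,}$ over length-additive products — is precisely the paper's. The case $\delta=(0,\ha)$ is immediate, as you note. The gap is in the case $\delta=(-\ha,0)$, exactly where you flag "the main obstacle I anticipate" and then propose to dismiss it.

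You claim that $\widetilde{s_\delta}=\overline y\cdot\overline{y^{-1}s_\delta y}\cdot\overline y^{-1}$ and $\overline{s_\delta}$ are "both of the form $z_\delta(-1)z_{-\delta}(1)z_\delta(-1)$ for the \emph{same} root vectors $Z_{\pm\delta}$," and that this "is exactly the content of the compatible-pinning normalization $Z_{y\alpha}=\overline y\cdot X_\alpha$." It is not. That equation is a \emph{definition} (Eq.\ \eqref{eq:sl2 z inverse alpha}): it tells you $\widetilde{s_\delta}=\overline y\,\overline{s_{y^{-1}\delta}}\,\overline y^{-1}$, but it says nothing about which element of $K$ this differs from $\overline{s_\delta}$ by. Concretely, with $\alpha=y^{-1}\delta=(\beta,0)$, $\beta=-\hw_0\ha$, one has $\Ad(\overline{\hw_0})X_\beta=c\,X_{-\ha}$ for some scalar $c$, and the three-term product $z_\delta(-1)z_{-\delta}(1)z_\delta(-1)$ equals $\overline{s_\ha}$ only when $c=-1$; when $c=+1$ it equals $\overline{s_\ha}^{-1}=\overline{s_\ha}\cdot\ha^\vee(-1)$, i.e.\ off by the very element of $K$ you are worried about. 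Nothing in the compatible-pinning set-up forces $c=-1$; you have asserted the sign, not proven it. Your fallback observation that "$\overline{s_{-\ha}}$ differs from $\overline{s_\ha}$ by $\ha^\vee(-1)$ but these cancel in the product" is also unsubstantiated — this is the entire content of the base case, not a side remark.

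The paper avoids computing the sign of $\Ad(\overline{\hw_0})X_\beta$ entirely. It uses the two length-additive factorizations $\overline{\hw_0}=\overline{s_\beta}\cdot\overline{s_\beta\hw_0}$ and $\overline{\hw_0}=\overline{\hw_0 s_\ha}\cdot\overline{s_\ha}$, together with the Weyl group identity $s_\beta\hw_0=\hw_0 s_\ha$, to conclude
\[
\overline{\hw_0}\,\overline{s_\ha}\,\overline{\hw_0}^{-1}
=\overline{s_\beta}\cdot\overline{s_\beta\hw_0}\cdot\overline{s_\ha}\cdot\overline{s_\ha}^{-1}\cdot\overline{\hw_0 s_\ha}^{-1}
=\overline{s_\beta},
\]
which is exactly the equality your argument leaves unproven. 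If you want to salvage your Lie-algebra-level route, you would need to independently establish $\Ad(\overline{\hw_0})X_\beta=-X_{-\ha}$, but that requires the same sort of bookkeeping and is less robust; the group-level factorization is the cleaner path.
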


\begin{proof}
    Because of the choice of $z$, then  $\ell(y \inv v y)= \ell(v).$ Indeed, $\ell(y \inv v y)$ equals the length of $v$ with respect to the set of simple reflections, of the Coxeter group $W$, associated to $y \Delta.$
    But $\Delta$ and $y \Delta$ induce the same set of simple reflections.
    Hence, it is sufficient to prove that, for any $\alpha \in \Delta$, we have $\widetilde{s_\alpha}= \overline{s_\alpha}.$
    With a little abuse, for $\hw \in \hW$ we denote by $\overline{ \hw} \in \hG$ the representative of $\hw$ computed with respect to the $\sl_2$-triples $(X_{-\ha}, H_{\ha}, X_\ha)$, for $\ha \in \hDel$. 
    
Recall that the $\sl_2$-triples of $\lg$ associated to the elements of $\Delta$ are compatible in the sense of Definition \ref{def:compatible sl2 triples}. It follows that, for any $\widehat v, \hw \in \hW$, we have 
$$\overline{(\widehat v, \hw)}= \bigl( \overline{\widehat v} , \overline{ \hw} \bigr).$$
Hence, if $\alpha=(0, \ha) $ for some $\ha \in \hDel$, we have that
$$\begin{array}{rl}
  \widetilde{s_\alpha} =   &  \bigl( \overline{\hw_0}, e \bigr) \bigl(e, \overline{s_{\ha}}\bigl) \bigr( \bigl(\overline{\hw_0} \bigr)^{-1} ,e \bigr) \\[0.5 em]
   =   & (e, \overline{s_{\ha}})\\[0.5 em]
   = & \overline{s_\alpha}.
\end{array}$$
Now, since $1+ \ell(s_{\hw_0\ha} \hw_0)= \ell(\hw_0)= \ell(\hw_0 s_{\ha}) + 1$, we deduce that 
$$ \overline{s_{\hw_0 \ha}} \cdot \overline{ s_{\hw_0 \ha} \hw_0}= \overline{ \hw_0}= \overline{\hw_0 s_{\ha}} \cdot \overline{s_{\ha}}.$$
In particular, if $\alpha=(- \hw_0 \ha, 0)$ for some $\ha \in \hDel$, then

$$
\begin{array}{r l}
   \widetilde{s_\alpha}=  & \bigl( \overline{s_{\hw_0 \ha}} \cdot \overline{s_{\hw_0 \ha} \hw_0}, e \bigr) \bigl(\overline{s_{\ha}}, e \bigr) \bigl( (\overline{s_{\ha}})^{-1}\cdot (\overline{\hw_0 s_{\ha}})^{-1} , e \bigr)\\[0.5em]
   =& \bigl(\overline{s_{\hw_0 \ha}},e )\\[0,5em]
   = & \overline{s_{\alpha}}.
\end{array}$$
In the second equality, we used that $ \overline{s_{\hw_0 \ha} \hw_0} =\overline{\hw_0 s_{\ha}}.$ This completes the proof.
\end{proof}

Let $\pi : \hG \times \hG \longto \hG$ be the map sending $(\widehat g_1, \hg_2) $ to $\hg_1 \hg_2^{-1}.$
Clearly, the map $\pi$ induces a $G$-equivariant isomorphism between $G/\hG$ and $\hG.$
For $ \widehat v, \hw \in \hW$ and $\ha \in \hDel$, we denote by $\Delta_{\widehat v \piha, \, \hw \piha} \in \CC[\hG]$ the Fomin-Zelevinsky generalised minor, as defined in \cite{fomin1999double}. We worn the reader that the notation for generalised spherical minors is very similar to the one for Fomin-Zelevinsky generalised minors. 
\[
\begin{array}{cc cc}
    \text{FZ generalised minors} & & \text{Generalised spherical minors}  & \\[0.5em]
   \Delta_{\widehat v \piha, \, \hw \piha}  &  & \Delta^{(\piha, \piha^*)}_{(\widehat v, \widehat w)} & (\ha \in \hDel, \,   \widehat v, \hw \in \hW).
\end{array}
\]

\begin{lemma}
    \label{lem:sph minors tensor prod equals FZ minors}
    For any $\widehat v, \hw \in \hW$ and $\ha \in \hDel$, we have that $\pi^* \bigl(\Delta_{\widehat v \piha , \, \hw \piha} \bigr)= \Delta^{(\piha, \piha^*)}_{(\widehat v, \widehat w)}$.
\end{lemma}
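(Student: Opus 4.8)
The plan is to identify both sides of the claimed equality as elements of $\CC[\hG]$ and then pin them down using a characterization analogous to Lemma \ref{lem:characterisation generalised spherical minors}, applied on the $\hG$ side via the Fomin--Zelevinsky theory. First I would unwind the definitions: under the isomorphism $G/\hG \cong \hG$ given by $\pi$, the coordinate ring $\CC[G]^{\hG} = \CC[G/\hG]$ is identified with $\CC[\hG]$, and the $G = \hG \times \hG$ action becomes the left--right action of $\hG \times \hG$ on $\hG$. Under this identification, the generalised spherical minor $\Delta^{(\piha,\piha^*)}_{(\widehat v,\widehat w)}$, viewed as a function on $G$ that is right $\hG$-invariant, descends to a function $\hg \mapsto \Delta^{(\piha,\piha^*)}_{(\widehat v,\widehat w)}(\hg_1,\hg_2)$ depending only on $\hg = \hg_1\hg_2^{-1}$. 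So the task is to compute this function and match it with the pullback $\pi^*(\Delta_{\widehat v\piha,\widehat w\piha})$, i.e. with $\hg \mapsto \Delta_{\widehat v\piha,\widehat w\piha}(\hg_1\hg_2^{-1})$.

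The key computation uses Lemma \ref{lem:tilde equals bar tensor}, which tells us $\widetilde{(\widehat v,\widehat w)} = \overline{(\widehat v,\widehat w)} = (\overline{\widehat v},\overline{\widehat w})$. Writing $v_{\piha}$ for the highest weight vector of $V(\piha)$ as a $\hG$-module, the $\hG$-invariant vector $v_{(\piha,\piha^*)}$ of $V(\piha)\otimes V(\piha^*)$ is (up to scalar) the canonical element of $V(\piha) \otimes V(\piha)^*$ corresponding to the identity endomorphism, and the $(yUy^{-1})$-invariant functional $\varphi_{(\piha,\piha^*)}$ is the one dual to it normalized by $\varphi(v_{(\piha,\piha^*)}) = 1$. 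Then by Definition \ref{def:sph minors},
\begin{equation*}
\Delta^{(\piha,\piha^*)}_{(\widehat v,\widehat w)}(\hg_1,\hg_2) = \varphi_{(\piha,\piha^*)}\bigl((\overline{\widehat v}^{-1},\overline{\widehat w}^{-1})\cdot(\hg_1,\hg_2)\cdot v_{(\piha,\piha^*)}\bigr),
\end{equation*}
and tracing through the identification $V(\piha)\otimes V(\piha)^* \cong \End(V(\piha))$ together with the pairing that defines $\varphi$, this equals the matrix coefficient $\langle \overline{\widehat w}^{-1}\hg_1\hg_2^{-1}\overline{\widehat v}\cdot v_{\piha}^+,\, \xi\rangle$ for the appropriate extremal weight covector $\xi$ of $V(\piha)^*$. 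Comparing with the definition of the Fomin--Zelevinsky minor $\Delta_{\widehat v\piha,\widehat w\piha}$ from \cite{fomin1999double} (Section 1.4) — which is precisely $\Delta_{\widehat v\piha,\widehat w\piha}(\hg) = \langle \overline{\widehat w}^{-1}\hg\overline{\widehat v}\cdot v_{\piha}^+, v_{\piha}^+{}^*\rangle$ with the standard pinning — gives the equality $\Delta^{(\piha,\piha^*)}_{(\widehat v,\widehat w)}(\hg_1,\hg_2) = \Delta_{\widehat v\piha,\widehat w\piha}(\hg_1\hg_2^{-1}) = \pi^*(\Delta_{\widehat v\piha,\widehat w\piha})(\hg_1,\hg_2)$, as desired.

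Alternatively, and perhaps more cleanly, I would avoid explicit matrix coefficients and instead invoke Lemma \ref{lem:characterisation generalised spherical minors}: pull its four characterizing properties back to $\CC[\hG]$ via $\pi$. A right $\hG$-invariant function on $G = \hG\times\hG$ descends to $\CC[\hG]$; property (2), left $(wyUy^{-1}w^{-1})$-invariance, translates (using $y = (\hw_0,e)$ and $w = (\widehat v,\widehat w)$) into the statement that $\pi^*\!{}^{-1}(\Delta^{(\piha,\piha^*)}_{(\widehat v,\widehat w)})$ lies in the appropriate translate of the big cell coordinate — which is exactly the defining left/right $U^-$-type invariance of the FZ minor $\Delta_{\widehat v\piha,\widehat w\piha}$ after conjugating by $\overline{\hw_0}$; property (3) translates into the correct $\hT\times\hT$-weight, which matches the weight $(\widehat v\piha, -\widehat w\piha)$ of the FZ minor; and property (4), the normalization at $\widetilde w$, translates via Lemma \ref{lem:tilde equals bar tensor} into $\Delta_{\widehat v\piha,\widehat w\piha}(\overline{\widehat v}\,\overline{\widehat w}^{-1})$, which one checks equals $1$ using $\Delta_{u\piha,u\piha}(e)=1$ and the cocycle relation $\overline{\widehat v}\,\overline{\widehat w}^{-1} \in \overline{\widehat v\widehat w^{-1}}K$ from Lemma \ref{lem: v bar w bar} together with the fact that $K$ acts trivially on the relevant weight spaces up to signs that cancel. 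Since these properties characterize $\Delta^{(\piha,\piha^*)}_{(\widehat v,\widehat w)}$ uniquely, the two functions coincide.

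I expect the main obstacle to be bookkeeping the various sign and normalization conventions — in particular, making sure the $\overline{\widehat w}$ appearing in the definition of $\widetilde w$ (built from $\overline{s_\alpha} = x_\alpha(-1)x_{-\alpha}(1)x_\alpha(-1)$) matches the convention used in \cite{fomin1999double} for their $\overline{w}$, and checking the identification of the $\hG$-fixed vector $v_{(\piha,\piha^*)} \in V(\piha)\otimes V(\piha^*)$ with the "identity" element under $V(\piha)\otimes V(\piha)^* \cong \End V(\piha)$ up to a scalar that is then killed by the normalization $\varphi(v)=1$. The representation-theoretic content is essentially trivial once the conventions are aligned; the risk is entirely in a misplaced sign or a factor of $\overline{\hw_0}$, which is why the characterization-by-properties route via Lemma \ref{lem:characterisation generalised spherical minors} is safer than a direct matrix-coefficient computation.
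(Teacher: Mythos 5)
Your second route---verifying the four characterizing properties of Lemma \ref{lem:characterisation generalised spherical minors} on the pullback $\pi^*(\Delta_{\hv\piha,\hw\piha})$---is essentially the paper's argument, and it is the correct approach (the matrix-coefficient version you sketched first, while salvageable, is indeed riskier for exactly the bookkeeping reasons you flag). The one place the paper is cleaner: it first reduces to $\hv = e = \hw$ by observing, from $G$-equivariance of $\pi$ and Lemma \ref{lem:tilde equals bar tensor}, that
\[
\pi^*\bigl(\Delta_{\hv\piha,\hw\piha}\bigr) = (\overline{\hv},\overline{\hw})\cdot\pi^*\bigl(\Delta_{\piha,\piha}\bigr)
\quad\text{and}\quad
\Delta^{(\piha,\piha^*)}_{(\hv,\hw)} = (\overline{\hv},\overline{\hw})\cdot\Delta^{(\piha,\piha^*)}_{(e,e)},
\]
so it suffices to check the four properties for $\pi^*(\Delta_{\piha,\piha})$ alone. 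This sidesteps the normalization issue you worry about: you invoke Lemma \ref{lem: v bar w bar} and hope that ``$K$ acts trivially up to signs that cancel,'' but that is not needed at all. The Fomin--Zelevinsky definition gives $\Delta_{\hv\lambda,\hw\lambda}(g) = \Delta_{\lambda,\lambda}(\overline{\hv}^{-1} g\,\overline{\hw})$, so directly $\Delta_{\hv\piha,\hw\piha}(\overline{\hv}\,\overline{\hw}^{-1}) = \Delta_{\piha,\piha}(e) = 1$; no cancellation of $K$-signs has to be argued. After the reduction, the remaining checks are exactly as you describe: right $\hG$-invariance is clear, left $yUy^{-1}$-invariance and the $T$-weight $(\piha,-\piha) = -y\bigl((\piha,\piha^*)^*\bigr)$ follow from \cite{fomin1999double}[Eq. (1.6),(1.10)], and the normalization at $\widetilde{(e,e)} = (e,e)$ is $\Delta_{\piha,\piha}(e)=1$.
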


\begin{proof}
    Because of Lemma \ref{lem:tilde equals bar tensor} and the fact that $\pi$ is $G$-equivariant, we have that 
    $$
    \pi^* \bigl(\Delta_{\widehat v \piha, \, \hw \piha} \bigr)= \bigl(\overline{\hv}, \overline{\hw}\bigr) \cdot \pi^* \bigl(\Delta_{\piha, \, \piha}\bigr) \quad \text{and} 
    \quad \Delta^{(\piha, \piha^*)}_{(\widehat v, \widehat w)}= \bigl(\overline{\hv}, \overline{\hw}\bigr)\cdot \Delta^{(\piha, \piha^*)}_{(e,e)}.$$
    Hence, we can assume that $\hv=e=\hw$. Note that  $\pi^* \bigl(\Delta_{\piha, \, \piha}\bigr)$ is clearly $\hG$-right invariant.
    Moreover, because of \cite{fomin1999double}[Eq. (1.6),(1.10)], then $\pi^* \bigl(\Delta_{\piha,\, \piha}\bigr)$ is  $y U y\inv $ left-invariant.
    Again by \cite{fomin1999double}[Eq. (1.6),(1.10)], if $\widehat h_1 , \widehat h_2 \in \hT$ and $\hg_1, \hg_2 \in \hG$, then 
    $$
    \begin{array}{rl}
         \pi^* \bigl(\Delta_{\piha, \, \piha}\bigr)\biggl(\bigl( \widehat h_1 \hg_1, \widehat h_2 \hg_2 \bigr) \biggr)=
         & \piha( \widehat h_1) \piha(\widehat h_2 \inv  ) \pi^* \bigl(\Delta_{\piha, \, \piha}\bigr)\biggl((\hg_1, \hg_2)\biggr)\\[1 em]
         = & \biggl( - y \bigl( (\piha, \piha^*)^* \bigr) \biggr) \biggl((\widehat h_1, \widehat h_2) \biggr)\pi^*\bigl(\Delta_{\piha, \, \piha}\bigr)\biggl((\hg_1, \hg_2)\biggr).
    \end{array}
    $$
    Indeed, $-y \bigl( (\piha, \piha^*)^* \bigr)= -(\hw_0,e)(-\hw_0 \piha, \piha)=(\piha, - \piha).$
    Moreover, by \cite{fomin1999double}[Eq. (1.6),(1.10)] we have that 
    $$\pi^*\bigl(\Delta_{\piha, \, \piha}\bigr)\bigl((e,e)\bigr)=1.$$
Then the statement follows by Lemma \ref{lem:characterisation generalised spherical minors}.
\end{proof}

\begin{proof}[Proof of Theorem \ref{thm:fundamental identities sph mnors}, diagonal case.]
Because of Lemma \ref{lem:sph minors tensor prod equals FZ minors}, Theorem \cite{fomin1999double}[Theorem 1.17] implies Theorem \ref{thm:fundamental identities sph mnors}. Indeed, \cite{fomin1999double}[Theorem 1.17] is equivalent to Theorem \ref{thm:fundamental identities sph mnors} and the fact that for any $\ha \in \hDel$ we have $\epsilon_{b_\ha}= 1$.
    
\end{proof}

\subsection{ \texorpdfstring{$(\SL_{2n}, \Sp_{2n})$}{SL2n, Sp2n}}
We consider the element $z \in Z_m$ defined in Example \ref{ex: sp sl pt 1}.
The Cartan matrix $A$ of the small root system of the pair $(\SL_{2n}, \Sp_{2n})$ is of type $A_{n-1}$. In particular, for $1 \leq k,m <n$, we have that 
$$
a_{\varpi_{2k}, \varpi_{2m}}:= \begin{cases}
    2 & \text{if} \quad k=m\\
    -1 & \text{if} \quad k=m\pm 1 \\
    0 & \text{otherwise}.
\end{cases}
$$
For $1 \leq i \leq 2n-1$, we denote 
$$ \delta_i:= y \alpha_i \quad s_i:=s_{\alpha_i} \quad \tau_i:= s_{\delta_i}= y s_i y \inv.$$
Note that the reflections $\tau_i$ are the simple reflections of the Coxeter group $W$, with respect to the base $y \Delta$ of the root system $\Phi.$ 
Fix $1 \leq k < n$. We set 

$$
\begin{array}{ll}
   \hw_{\varpi_{2k}}:= \tau_{2(n-k)} \tau_{2(n-k)+1}\tau_{2(n-k)-1}\tau_{2(n-k)} \\[0.5em]
    U_{\varpi_{2k}}:= \bigl \{ e , \tau_{2(n-k)}, \tau_{2(n-k)+1} \tau_{2(n-k)} \bigr\}. 
\end{array}  $$

A direct computation shows that $\hw_{\varpi_{2k}}= (n-k, n-k+1) (\overline{ n-k+1}, \overline{n-k})$. Hence, we have that $\hw_{\varpi_{2k}} \in \hW.$  
Note that 
$$
\lambda_{\varpi_{2k}}= \varpi_{2(k-1)}+\varpi_{2(k+1)},
$$
with the convention that
$$
\varpi_0= 0= \varpi_{2n}.
$$
Note that $\lambda_{\varpi_{2k}}^*= \varpi_{2(n-k-1)} + \varpi_{2(n-k+1)}$ is stabilised by $s_{2(n-k)-1}, s_{2(n-k)}$ and $s_{2(n-k)+1}$. 
Hence, the set $U_{\varpi_{2k}}$ is contained in the stabiliser of $y \lambda_{\varpi_{2k}}^*$. We are now in position to prove claims 1 to 3. Note that $\varpi_{2k}^*= \varpi_{2(n-k)} $.

\begin{proof}[Proof of Claim 1.] 
The computations needed for the proof, that can be easily carried out by hand, are summarised in Table \ref{tab:sl sp}.
The latter should be read as follows. Let $\delta   \in y \Delta$ and $v \in U_{\varpi_{2k}} \cup U_{\varpi_{2k}} \cdot \hw_{\varpi_{2k}}$. In the entry indexed by the row $\delta$ and the column $v$, we put a symbol $\color{Green}{+}$ if $v \inv \delta \in y \Phi^+$. Conversely, if $v\inv \delta \in y \Phi^-$, the entry is filled with $\color{red}{\langle v y \varpi_{2k}^*, \delta^\vee \rangle}.$ Note that $\langle v y \varpi_{2k}^*, \delta^\vee \rangle= \langle  y \varpi_{2k}^*, (v \inv\delta)^\vee \rangle$.

    \begin{table}[H]
        \centering
        \begin{tabular}{l|c|c|c|c|c|c}
             &  $e$ & $\hw_{\varpi_{2k}}$ & $\tau_{2(n-k)}$ & $\tau_{2(n-k)} \hw_{\varpi_{2k}}$ & $\tau_{2(n-k)+1}\tau_{2(n-k)}$ &  $\tau_{2(n-k)+1}\tau_{2(n-k)} \hw_{\varpi_{2k}}$\\
             \hline 
          $\delta_{2(n-k)}$   & $\color{Green}{+}$ & \color{red}{-1} & \color{red}{-1} & $\color{Green}{+}$ & $\color{Green}{+}$  & $\color{Green}{+}$\\
             \hline 
            $\delta_{2(n-k)+1}$ & $\color{Green}{+}$ & $\color{Green}{+}$ & $\color{Green}{+}$ & \color{red}{-1} &  \color{red}{-1} & $\color{Green}{+}$\\
             \hline 
             $\delta_{2(n-k)-1}$ & $\color{Green}{+}$&  $\color{Green}{+}$& $\color{Green}{+}$ & \color{red}{-1} &  $\color{Green}{+}$  & \color{red}{-1} \\
             \hline 
            \makecell{$\delta_i$  \\   \tiny{$|i - 2(n-k) | \geq 2$}}  & $\color{Green}{+}$ & $\color{Green}{+}$ & $\color{Green}{+}$ & $\color{Green}{+}$ & $\color{Green}{+}$ & $\color{Green}{+}$
        \end{tabular}
        \caption{Infinitesimal action \texorpdfstring{$(\SL_{2n}, \Sp_{2n})$}{SL2n, Sp2n}}
        \label{tab:sl sp}
    \end{table}

We explain, as an example, a possible way of filling the column indexed by $$\tau_{2(n-k)} \hw_{\varpi_2k}= \tau_{2(n-k)+1}\tau_{2(n-k)-1}\tau_{2(n-k)}.$$ 
Recall that an inversion of $w \in W$ is a positive root $\beta \in \Phi^+$ such that $w \cdot \beta \in \Phi^-$.
The set of simple inversions of $s_{2(n-k)}s_{2(n-k)-1}s_{2(n-k)+1}$ is $\{\alpha_{2(n-k)+1}, \alpha_{2(n-k)-1}\}.$ Hence, all the entries of the chosen column are $\color{Green}{+}$, except for the ones whose row index are $\delta_{2(n-k)-1}$ or $\delta_{2(n-k)+1}$. Moreover, 

$$\begin{array}{rl}
    \langle \tau_{2(n-k)} \hw_{\varpi_{2k}}  y \varpi_{2k}^* \,,\, \delta_{2(n-k)\pm 1}^\vee\rangle = & 
   \langle y \varpi_{2k}^* \,, \,y s_{2(n-k)}s_{2(n-k)-1}s_{2(n-k)+1} \alpha_{2(n-k)\pm 1}^\vee \rangle  
   \\[0.5em]
    = & \langle  \varpi_{2k}^* \,, \, s_{2(n-k)}s_{2(n-k)-1}s_{2(n-k)+1} \alpha_{2(n-k)\pm 1}^\vee \rangle \\[0.5em]
    = & \langle  \varpi_{2(n-k)} \,,\, \bigl( -\alpha_{2(n-k)\pm 1} -\alpha_{2(n-k)}\bigr)^\vee \rangle \\[0.5em]
    = & -1.
\end{array} $$

Corollary \ref{cor: Z delta sph minor.} and Table \ref{tab:sl sp} easily allow to conclude. Indeed, if 
$$
\delta \in y \Delta \setminus \{ \delta_{2(n-k)-1}, \delta_{2(n-k)},\delta_{2(n-k)+1} \},
$$
then $Z_{\delta} \cdot f_{\varpi_{2k}}=0.$ While, if $\delta=\delta_{2(n-k)}$, using the Leibniz rule, we compute that  

    $$ Z_\delta \cdot f_{\varpi_{2k}} = \Delta^{\varpi_{2k}}_e \Delta^{\varpi_{2k}}_{\tau_{2(n-k) \hw_{\varpi_{2k}}}} -  \Delta^{\varpi_{2k}}_{\tau_{2(n-k)}\tau_{2(n-k)}}  \Delta^{\varpi_{2k}}_{\tau_{2(n-k) \hw_{\varpi_{2k}}}} =0 .$$
    The case of $\delta_{2(n-k) \pm 1}$ is almost identical to the previous one.
\end{proof}

\begin{proof}[Proof of Claim 2.] The identity we need to prove is equivalent to 
  $$  \varpi_{2(n-k)}+  s_{2(n-k)} s_{2(n-k)+1} s_{2(n-k)-1}s_{2(n-k)} \varpi_{2(n-k)} = \varpi_{2(n-k-1)} + \varpi_{2(n-k+1)},$$
which follows by an immediate calculation.
\end{proof}

\begin{proof}[Proof of Claim 3.]
     We have that
    $$\begin{array}{rl}
         \tau_{2(n-k)} y \varpi_{2k}^*= & y s_{2(n-k)}\varpi_{2(n-k)}  \\
        = & y \varpi_{2(n-k)}- \delta_{2(n-k)}.
    \end{array}$$
    But $\rho\bigl( y \varpi_{2(n-k)}\bigr)=0$ because of \eqref{eq: expression for spherical weights}, while $\rho\bigl(\delta_{2(n-k)}\bigr) = \rho\bigl( \varepsilon_{\overline{n-k}} - \varepsilon_{n-k+1}\bigr) \neq 0.$ Hence,
    $$\rho \bigl(  \tau_{2(n-k)} y \varpi_{2k}^* \bigr) \neq 0.$$
    Similarly, one verifies, by direct computation, that 
    $$\rho\bigl( \tau_{2(n-k)+1} \tau_{2(n-k)} y \varpi_{2k}^* \bigr) \neq 0.$$
\end{proof}

\begin{example}
    \label{ex:relations minors sl sp n=2}
    Let $n=2, k=1$.
    In the notation of Example \ref{ex: sp sl pt 5}, we have that 
$$ \begin{array}{r l }
    \Delta^{\varpi_2}_e(g) 
   = & (x_{21}x_{34}- x_{24}x_{31}) + (x_{22}x_{33} - x_{23}x_{32})\\
   \Delta^{\varpi_2}_{\tau_2 \tau_3 \tau_1 \tau_2}(g)= & - (x_{11}x_{44} - x_{14}x_{41}) - (x_{12}x_{43} - x_{13}x_{42})\\
   \Delta^{\varpi_2}_{\tau_2}(g)= &  (x_{31}x_{44} - x_{34}x_{41}) + (x_{32}x_{43} - x_{33}x_{42})\\
   \Delta^{\varpi_2}_{ \tau_3 \tau_1 \tau_2}(g)= &  (x_{11}x_{24} - x_{14}x_{21}) + (x_{12}x_{23} - x_{13}x_{22})\\
   \Delta^{\varpi_2}_{ \tau_3  \tau_2}(g)= &  - (x_{21}x_{44} - x_{24}x_{41}) - (x_{22}x_{43} - x_{23}x_{42})\\
    \Delta^{\varpi_2}_{ \tau_1  \tau_2}(g)= &  - (x_{11}x_{34} - x_{14}x_{31}) -(x_{12}x_{33} - x_{13}x_{32}).   
\end{array}$$
Expanding $f_{\varpi_2}= \Delta^{\varpi_2}_e \Delta^{\varpi_2}_{\tau_2 \tau_3 \tau_1\tau_2}-  \Delta^{\varpi_2}_{\tau_2}\Delta^{\varpi_2}_{\tau_3 \tau_1 \tau_2} + \Delta^{\varpi_2}_{\tau_3\tau_2}\Delta^{\varpi_2}_{ \tau_1 \tau_2}$ using the previous expressions, we obtain that 
$$f_{\varpi_2}(g)= - \det(g)= - 1.$$
Relation \eqref{eq:fundamental iddentity sh minors}, for $w=e$, predicts that 
    \begin{equation}
        \label{eq:relation slsp n=2}
        \Delta^{\varpi_2}_e \Delta^{\varpi_2}_{\tau_2 \tau_3 \tau_1\tau_2}= \Delta^{\varpi_2}_{\tau_2}\Delta^{\varpi_2}_{\tau_3 \tau_1 \tau_2}- \Delta^{\varpi_2}_{\tau_3\tau_2}\Delta^{\varpi_2}_{ \tau_1 \tau_2} \pm  1,
    \end{equation}
    which is equivalent to $f_{\varpi_2} = \pm 1.$
In this case we have that 
$$
\Delta^{\varpi_2}_{\hw_{\varpi_{2k}}}(e)=\epsilon_{\varpi_{2k}}=-1.
$$

\end{example}

\subsection{A graphical representation of Theorem \ref{thm:fundamental identities sph mnors} for \texorpdfstring{$(\SL_{2n}, \Sp_{2n})$}{SL2n, Sp2n}}

Let $1 \leq k < n$ and $w \in W= S_{2n}$. In the notation of Example \ref{ex: sp sl pt 5}, recall that 
 $$R_k= \{ n-k+1, n-k+2, \dots, n , \overline{n}, \dots , \overline{n-k+2}, \overline{n-k+1} \}.$$
 Moreover, let 
 $$
 R_0= \emptyset \quad R_n= [2n].
 $$
By Corollary \ref{cor:sph minors only depend on weights}, the generalised spherical minor $\Delta^{\varpi_{2k}}_w$ is completely determined by the set $w(R_k)$.
Indeed
$$
w y \varpi_{2k}^* = - \sum_{j \in w(R_k) }\varepsilon_j.
$$
 A more precise statement is given in \eqref{eq: delta pi2k w sp sl}. In particular, in the present section we write 
$$
\Delta^{\varpi_{2k}}_w=:\Delta_{w(R_k)}.
$$ 
We set 
$$\Delta_{R_0}:= 1 =: \Delta_{R_n}.$$
Then, for $w=e$, relation \eqref{eq:fundamental iddentity sh minors} can be read as 
$$
\begin{array}{rl}
  \Delta_{R_k}\Delta_{\hw_{\varpi_{2k}}(R_k)} = & \Delta_{ \tau_{2(n-k)}(R_k)}\Delta_{ \tau_{2(n-k)}\hw_{\varpi_{2k}}(R_k)}  \\[0.5em]
   & - \Delta_{ \tau_{2(n-k)+1} \tau_{2(n-k)}(R_k)}\Delta_{ \tau_{2(n-k)+1} \tau_{2(n-k)} \hw_{\varpi_{2k}}(R_k)} \pm \Delta_{R_{k-1}} \Delta_{R_{k+1}}.
\end{array}$$

\bigskip

\noindent Note that, for any $v \in U_{\varpi_{2k}}$, we have that
 $$
 v(R_k) \cup v \hw_{\varpi_{2k}} (R_k)= R_{k+1} \quad \text{and} \quad v(R_k) \cap v \hw_{\varpi_{2k}} (R_k)= R_{k-1}.
 $$
The three monomials of the form $\Delta_{v(R_k)}\Delta_{v \hw_{\varpi_{2k}}(R_k)}$, for $v \in U_{\varpi_{2k}}$, can be interpreted as the set of pairs of the form $\{ R_{k-1} \cup I \, , \, R_{k+1} \setminus I\}$, where $I$ is a subset of cardinality two of $\{n-k,n-k+1, \overline{n-k}, \overline{n-k+1}\}$.

\bigskip
 
We chose to represent the minor $\Delta^{\varpi_{2k}}_w$ with a column vector of $2n$-elements. The entry in position $i$ is a blue star symbol ($\color{blue}{\bigstar}$) if $i \in w(R_k)$, and a dot symbol ($\cdot$) otherwise.
 For example, for $n=2$ and $k=1$, we have

 $$\Delta^{\varpi_2}_e= \bmat \cdot  \\
 \color{blue}{\bigstar} \\
 \color{blue}{\bigstar}\\
 \cdot \emat \quad \Delta^{\varpi_2}_{\tau_2}= \bmat \cdot  \\
\cdot  \\
 \color{blue}{\bigstar}\\
  \color{blue}{\bigstar} \emat.
 $$
 Moreover, the function $1 \in \CC[G]$ is denoted both with a column of $2n$ dots and with a column of $2n$ stars.
 Then, for $n=2$, $k=1$ and $w=e$, relation \eqref{eq:fundamental iddentity sh minors} becomes 
 
 \begin{equation}
     \label{eq:fund sph graphical n=2 k=1}
     \bmat \cdot  \\
 \color{blue}{\bigstar} \\
 \color{blue}{\bigstar}\\
 \cdot \emat \cdot 
 \bmat 
 \color{blue}{\bigstar} \\
 \cdot  \\
 \cdot  \\
 \color{blue}{\bigstar} \emat 
 = 
 \bmat \cdot  \\
\cdot  \\
 \color{blue}{\bigstar}\\
  \color{blue}{\bigstar} \emat
  \cdot
  \bmat 
 \color{blue}{\bigstar}\\
  \color{blue}{\bigstar} \\
  \cdot  \\
\cdot  \emat 
- 
 \bmat \cdot  \\
 \color{blue}{\bigstar}\\
\cdot  \\
  \color{blue}{\bigstar} \emat 
  \cdot
  \bmat 
 \color{blue}{\bigstar}\\
  \cdot  \\
   \color{blue}{\bigstar} \\
\cdot  \emat
\pm \bmat
\cdot \\
\cdot \\
\cdot \\
\cdot \emat
\cdot 
\bmat
\color{blue}{\bigstar}\\
\color{blue}{\bigstar}\\
\color{blue}{\bigstar}\\
\color{blue}{\bigstar} \emat.
 \end{equation}

\bigskip

For $n$, $k$ general and $w=e$, the graphical version of \eqref{eq:fundamental iddentity sh minors} can be obtained from \eqref{eq:fund sph graphical n=2 k=1} by applying the following two operations to any column vector of \eqref{eq:fund sph graphical n=2 k=1}.
\begin{enumerate}
    \item Insert $2(k-1)$ stars ($\color{blue}{\bigstar}$) in the middle of the column (that is between row 2 and 3).
    \item Insert $n-k-1$ dots ($\cdot$) in the upper part of the column (that is before the first entry) and $n-k-1$ dots ($\cdot$) in the lower part (that is below the last entry).
\end{enumerate}

For $n=3,k=2$ and $w=e$ we get
\begin{equation*}
     \bmat
     \cdot  \\
     \color{blue}{\bigstar}   \\
 \color{blue}{\bigstar} \\
 \color{blue}{\bigstar}\\
 \color{blue}{\bigstar}  \\
 \cdot  \emat \cdot 
 \bmat 
 \color{blue}{\bigstar} \\
 \cdot  \\
 \color{blue}{\bigstar}   \\
 \color{blue}{\bigstar}   \\
 \cdot \\
 \color{blue}{\bigstar}
 \emat   
 = 
 \bmat 
 \cdot  \\
 \cdot  \\
\color{blue}{\bigstar}  \\
 \color{blue}{\bigstar}\\
  \color{blue}{\bigstar} \\
  \color{blue}{\bigstar}   \emat
  \cdot
  \bmat \color{blue}{\bigstar} \\
 \color{blue}{\bigstar}\\
  \color{blue}{\bigstar} \\
  \color{blue}{\bigstar}   \\
  \cdot\\
\cdot  \emat 
- 
 \bmat
 \cdot\\
 \color{blue}{\bigstar}   \\
 \color{blue}{\bigstar}\\
 \color{blue}{\bigstar} \\
\cdot  \\
  \color{blue}{\bigstar}  
  \emat 
  \cdot
  \bmat 
 \color{blue}{\bigstar}\\
 \cdot \\
  \color{blue}{\bigstar}   \\
  \color{blue}{\bigstar} \\
   \color{blue}{\bigstar} \\
\cdot
\emat
\pm \bmat
\cdot\\
\cdot \\
\color{blue}{\bigstar}  \\
\color{blue}{\bigstar}  \\
\cdot \\
\cdot
\emat
\cdot 
\bmat
\color{blue}{\bigstar} \\
\color{blue}{\bigstar}\\
\color{blue}{\bigstar}\\
\color{blue}{\bigstar}\\
\color{blue}{\bigstar} \\
\color{blue}{\bigstar}  \emat.
 \end{equation*}

For $n,k$ and $w$ general, the graphical version of relation \eqref{eq:fundamental iddentity sh minors} is obtained from the corresponding one for $w=e$, by permuting the stars in each column according to $w$. For example, for $n=3$, $k=2$ and $w= \tau_4=(3,5)$, then \eqref{eq:fundamental iddentity sh minors} is represented as 

\begin{equation*}
     \bmat
     \cdot  \\
     \color{blue}{\bigstar}   \\
 \color{blue}{\bigstar} \\
 \color{blue}{\bigstar}\\
 \color{blue}{\bigstar}  \\
 \cdot  \emat \cdot 
 \bmat 
 \color{blue}{\bigstar} \\
 \cdot  \\
  \cdot\\
  \color{blue}{\bigstar}   \\
 \color{blue}{\bigstar}   \\
 \color{blue}{\bigstar}
 \emat   
 = 
 \bmat 
 \cdot  \\
 \cdot  \\
\color{blue}{\bigstar}  \\
 \color{blue}{\bigstar}\\
  \color{blue}{\bigstar} \\
  \color{blue}{\bigstar}   \emat
  \cdot
  \bmat \color{blue}{\bigstar} \\
 \color{blue}{\bigstar}\\
  \cdot\\
   \color{blue}{\bigstar} \\
  \color{blue}{\bigstar}   \\
\cdot  \emat 
- 
 \bmat
 \cdot\\
 \color{blue}{\bigstar}   \\
 \cdot\\
 \color{blue}{\bigstar}\\
 \color{blue}{\bigstar} \\
  \color{blue}{\bigstar}  
  \emat 
  \cdot
  \bmat 
 \color{blue}{\bigstar}\\
 \cdot \\
  \color{blue}{\bigstar}   \\
  \color{blue}{\bigstar} \\
   \color{blue}{\bigstar} \\
\cdot
\emat
\pm \bmat
\cdot\\
\cdot \\
\cdot\\
\color{blue}{\bigstar}  \\
\color{blue}{\bigstar}  \\
\cdot
\emat
\cdot 
\bmat
\color{blue}{\bigstar} \\
\color{blue}{\bigstar}\\
\color{blue}{\bigstar}\\
\color{blue}{\bigstar}\\
\color{blue}{\bigstar} \\
\color{blue}{\bigstar}  \emat.
 \end{equation*}

\subsection{ \texorpdfstring{$(\Spin_{2n}, \Spin_{2n-1})$}{Spin2n, Spin2n-1}}

We set 
$$ 
z:= s_1 s_2 \cdots s_{n-1}, \quad \text{therefore} \quad y=s_{n-1} \cdots s_1.
$$
The Cartan matrix $A$ is of size one by one and $A=(2)$. Indeed, the small root system is of type $A_1$. 
For $1 \leq i \leq n$, we denote 
$$ 
\delta_i:= y \alpha_i \quad s_i:=s_{\alpha_i} \quad \tau_i:= s_{\delta_i}.
$$
Moreover, we define 
$$
v_j:= \begin{cases}
    \tau_j \cdots \tau_1 & \text{if} \quad 1 \leq j \leq n-1\\
    e & \text{if}  \quad j=0.
\end{cases}
$$

\noindent Then, we set 

$$
\begin{array}{l}
\hw_{\varpi_1}:= \tau_1 \tau_2 \cdots \tau_{n-2} \, \tau_{n-1} \, \tau_n \tau_{n-2} \cdots \tau_2 \tau_1= s_n s_{n-1}\\[0.5em]
U_{\varpi_1}:= \{ v_i \, : \, 0 \leq i \leq n-1\}.
\end{array}$$

Note that $V(\varpi_1)$ is a $2n$-dimensional space with a quadratic form. This allows to identify the group $\Spin_{2n}$ as the double cover of the associated orthogonal group. Then, the relation given in Theorem \ref{thm:fundamental identities sph mnors} is equivalent to the fact that the action of $\Spin_{2n}$, on $V(\varpi_1)$, preserves the norm of the $\Spin_{2n-1}$-invariant vectors.

\bigskip

Clearly, we have that 
$$
\lambda_{\varpi_1}= 0.
$$
Hence, the set $U_{\varpi_{1}}$ stabilises $y \lambda_{\varpi_{1}}^*=0.$ Moreover, by Lemma \ref{lem: reflections sph rk min}, then 
$$
\hw_{\varpi_{1}}= s_n s_{n-1} = s_{\ha_{n-1}} \in \hW.
$$ 
We are now in position to prove claims 1 to 3. Note that $\varpi_{1}^*= \varpi_{1} $.

\begin{proof}[Proof of Claim 1.]
    Let $1 \leq i \leq n-1$ and $v \in U_{\varpi_{1}} \cup U_{\varpi_{1}} \cdot \hw_{\varpi_{1}}.$ We have that $v\inv \delta_i \in y \Phi^+$ unless $v \in \{v_i, v_{i-1} \hw_{\varpi_{1}}\}.$ Moreover, 
    $$
        \langle v_i y \varpi_1^* \, , \, \delta_i^\vee \rangle =  
        \langle s_i \dots s_1 \varepsilon_1 \, , \, \alpha_i^\vee \rangle  
        =
        \langle \varepsilon_{i+1} \, , \, \alpha_{i}^\vee \rangle 
        =  -1.
    $$
    Similarly, we compute that $\langle v_{i-1} \hw_{\varpi_{1}} y \varpi_1^* \, , \, \delta_i^\vee \rangle = -1.$
Then, by Corollary \ref{cor: Z delta sph minor.} and the Leibniz rule, we compute that 
$$
    Z_{\delta_i} \cdot f_{\varpi_1} =
    (-1)^{i-1} \bigl ( \Delta^{\varpi_{1}}_{v_{i-1}}  \Delta^{\varpi_{1}}_{\tau_i v_{i-1}\hw_{\varpi_{1}}} -  \Delta^{\varpi_{1}}_{\tau_iv_{i}}   \Delta^{\varpi_{1}}_{ v_{i}\hw_{\varpi_{1}}} \bigr)= 0.
$$
Indeed $\tau_i v_{i-1}=v_i$.

Similarly, we have that $v\inv \delta_n \in y \Phi^+$ unless $v \in \{v_{n-2} \hw_{\varpi_{1}} , v_{n-1} \hw_{\varpi_{1}}\}. $ Moreover, we easily compute that $$\langle v_{n-2} \hw_{\varpi_{1}} y \varpi_1^* \, , \, \delta_n^\vee \rangle = -1 = \langle v_{n-1} \hw_{\varpi_{1}} y \varpi_1^* \, , \, \delta_n^\vee \rangle.$$
Then, note that $\tau_n v_{n-1} \hw_{\varpi_{1}}= v_{n-2}$ and $\tau_n v_{n-2} \hw_{\varpi_{1}}= v_{n-1}$. Hence, from Corollary \ref{cor: Z delta sph minor.} and the Leibniz rule, we deduce that
$$
    Z_{\delta_n} \cdot f_{\varpi_1} =
    (-1)^{n-2} \bigl ( \Delta^{\varpi_{1}}_{v_{n-2}}  \Delta^{\varpi_{1}}_{\tau_n v_{n-2}\hw_{\varpi_{1}}} -  \Delta^{\varpi_{1}}_{v_{n-1}}   \Delta^{\varpi_{1}}_{ \tau_n v_{n-1}\hw_{\varpi_{1}}} \bigr)= 0.
$$
\end{proof}

\begin{proof}[Proof of Claim 2.]
     Let $u=s_1 \cdots s_{n-2} s_{n-1} s_n s_{n-2} \cdots s_1$. A direct computation shows that 
    $$u \varpi_1= - \varepsilon_1= -\varpi_1.$$
    It follows at ones that $$y \varpi_1^* + \hw_{\varpi_1} y \varpi_1^*= y \bigl ( \varpi_1 + u \varpi_1 \bigr) =0.$$
\end{proof}

\begin{proof}[Proof of Claim 3.]
   For $0<i<n$, we have that $$v_i y \varpi_1^*= y s_i \cdots s_1 \varepsilon_1= \varepsilon_i.$$
   We conclude by noticing that $\rho(\varepsilon_i)= \widehat \varepsilon_i \neq 0.$
\end{proof}

\subsection{ \texorpdfstring{$(E_{6}, F_4)$}{E6, F4}}
We set
$$
z:= s_6 s_1 s_3 s_4 s_5 s_4 s_3 s_2 s_4 s_3 s_5 s_1, \quad \text{therefore} \quad 
y=s_1 s_5 s_3 s_4 s_2 s_3 s_4 s_5 s_4 s_3 s_1 s_6.
$$
The small root system is of type $A_2$. The Cartan matrix $A$ is given by 
$$
a_{\varpi_i, \varpi_j}:= \begin{cases}
    2 & \text{if} \quad i = j\\
     -1 & \text{if} \quad i \neq j.
\end{cases}
$$ 
For $1 \leq i \leq 6$, we denote 
$$ \delta_i:= y \alpha_i \quad s_i:=s_{\alpha_i} \quad \tau_i:= s_{\delta_i}.
$$
Let $\sigma := ( 1, 6) (3,5) \in S_6$ be the permutation corresponding to the non-trivial symmetry of the Dynkin diagram of $E_6$.
With little abuse of notation, we also denote by $\sigma : \lg \longto \lg$ the authomorphism induced by 
$$
    Z_{\pm \delta_i}  \longmapsto   Z_{\pm \delta_{\sigma(i)}},
$$
The authomorphism $\sigma$ induces maps $X(T) \longto X(T)$ and $W \longto W$, that are again denote by $\sigma$. Note that $\sigma \bigl( \tau_i \bigr)= \tau_{\sigma(i)}$ and $\sigma \bigl( \delta_i\bigr) = \delta_{\sigma(i)}.$
Let 
$$
v_0:= e \quad v_1:= \tau_1 \quad v_2:= \tau_3 \tau_1 \quad v_3:= \tau_4 \tau_3 \tau_1 \quad v_4:= \tau_ 2 \tau_4 \tau_3 \tau_1
$$

\noindent Then, we set 

$$
\begin{array}{l c l }
   \hw_{\varpi_6}:= \tau_1 \tau_3 \tau_4 \tau_2 \tau_5 \tau_4 \tau_3 \tau_ 1 & & 
U_{\varpi_6}:= \{ v_i \, : \, 0 \leq i \leq 4\} \\[0.5em]
\hw_{\varpi_1}:= \sigma \bigl( \hw_{\varpi_{6}} \bigr) & & U_{\varpi_1}:= \sigma \bigl( U_{\varpi_{6}} \bigr).
\end{array}
$$
We complete the proof of Theorem \ref{thm:fundamental identities sph mnors} for $\varpi_6$. The same strategy of proof also works in the case of $\varpi_1$. Indeed, by means of $\sigma$, it's easy to see that the statements we prove for the $\varpi_6$ case, imply the analogues ones for the $\varpi_1$ case. 

\bigskip

Note that 
$$\lambda_{\varpi_6}= \varpi_1 \quad \lambda_{\varpi_{6}}^*= \varpi_6 \quad \varpi_6^*=\varpi_1.$$

It's clear that the set $U_{\varpi_{6}}$ stabilises $y \lambda_{\varpi_{6}}^*.$ Indeed, $s_1,s_2,s_3$ and $s_4$ stabilise $\varpi_6$. We can compute that 
$$
     \hw_{\varpi_{6}}=  y s_1s_3s_4s_2s_5s_4s_3s_1 y \inv 
     =  s_1 s_6 s_3 s_5 s_6 s_1 = s_{\ha_4} s_{\ha_3} s_{\ha_4}.
$$
The last equality is a consequence of Lemma \ref{lem: reflections sph rk min}. In particular, we have that $\hw_{\varpi_{6}} \in \hW.$ Hence, claims 1,2 and 3 imply Theorem \ref{thm:fundamental identities sph mnors}. Claim 2 and 3 follow by some elementary calculations.  We focus on Claim 1.

\begin{proof}[Proof of Claim 1.]
    Table \ref{tab:E6,F4}, which is obtained by direct computation, should be read as the analogue Table \ref{tab:sl sp}.

    \begin{table}[H]
        \centering
        \begin{tabular}{l|c|c|c|c|c|c|c|c | c | c}
             &  $e$ & $ \hw_{\varpi_{6}} $ & $v_1$ & $v_1 \hw_{\varpi_{6}}$ & $v_2$ & $v_2 \hw_{\varpi_{6}}$ & $v_3$ & $v_3 \hw_{\varpi_{6}}$ & $v_4$ & $v_4 \hw_{\varpi_{6}}$\\
             \hline 
          $\delta_1$   & $\color{Green}{+}$ & $\color{red}{-1}$ & $\color{red}{-1}$ & $\color{Green}{+}$ & $\color{Green}{+}$ &  $\color{Green}{+}$   & $\color{Green}{+}$ & $\color{Green}{+}$  & $\color{Green}{+}$ & $\color{Green}{+}$ \\
             \hline 
            $\delta_{3}$ & $\color{Green}{+}$ & $\color{Green}{+}$ & $\color{Green}{+}$ &$\color{red}{-1}$ &  $\color{red}{-1}$ & $\color{Green}{+}$ & $\color{Green}{+}$ &  $\color{Green}{+}$ & $\color{Green}{+}$ & $\color{Green}{+}$ \\
             \hline 
             $\delta_{4}$ & $\color{Green}{+}$&  $\color{Green}{+}$ &  $\color{Green}{+}$ & $\color{Green}{+}$ &  $\color{Green}{+}$  & $\color{red}{-1}$ & $\color{red}{-1}$ &  $\color{Green}{+}$ &  $\color{Green}{+}$ &  $\color{Green}{+}$ \\
             \hline 
             $\delta_{2}$ & $\color{Green}{+}$&  $\color{Green}{+}$ &  $\color{Green}{+}$ & $\color{Green}{+}$ &  $\color{Green}{+}$  &  $\color{Green}{+}$ &  $\color{Green}{+}$ &   $\color{red}{-1}$ &  $\color{red}{-1}$ & $\color{Green}{+}$\\
             \hline 
            $\delta_{5}$ & $\color{Green}{+}$&  $\color{Green}{+}$ &  $\color{Green}{+}$ & $\color{Green}{+}$ &  $\color{Green}{+}$  &  $\color{Green}{+}$ &  $\color{Green}{+}$ &   $\color{red}{-1}$ &  $\color{Green}{+}$ & $\color{red}{-1}$\\
             \hline 
             $\delta_{6}$ & $\color{Green}{+}$&  $\color{Green}{+}$ &$\color{Green}{+}$ & $\color{Green}{+}$ &  $\color{Green}{+}$  & $\color{Green}{+}$ & $\color{Green}{+}$ &  $\color{Green}{+}$ & $\color{Green}{+}$ & $\color{Green}{+}$\\
        \end{tabular}
        \caption{Infinitesimal action \texorpdfstring{$(E_{6}, F_{4})$}{E_6, F_4}}
        \label{tab:E6,F4}
    \end{table}
As in the case of $(\SL_{2n}, \Sp_{2n})$, Table \ref{tab:B3 G2} easily allows to conclude using Corollary \ref{cor: Z delta sph minor.} and the Leibniz rule. For example, using that $v_3= \tau_5 v_4 \hw_{\varpi_{6}}$ and $v_4= \tau_5 v_3 \hw_{\varpi_{6}}$, we compute that 

$$Z_{\delta_5} \cdot f_{\varpi_6} = (-1)^3 \bigl( \Delta_{v_3}^{\varpi_6} \Delta_{\tau_5 v_3 \hw_{\varpi_{6}}}^{\varpi_6} - \Delta_{v_4}^{\varpi_6} \Delta_{\tau_5 v_4 \hw_{\varpi_{6}}}^{\varpi_6} \bigr)=0.$$
\end{proof}

\subsection{ \texorpdfstring{$(B_{3}, G_2)$}{B3, G2}}
\label{sec:G2}
We set 
$$z:= s_3 s_2 s_3, \quad \text{therefore} \quad y=z.
$$
The Cartan matrix $A$ is of size one by one and $A=(2)$. Indeed, the small root system is of type $A_1$. 

For $1 \leq i \leq 3$, we denote 
$$ 
\delta_i:= y \alpha_i \quad s_i:=s_{\alpha_i} \quad \tau_i:= s_{\delta_i}.
$$
Let 
$$
v_0:= e \quad v_1:= \tau_3 \quad v_2:= \tau_2 \tau_3 \quad v_3:= \tau_3 \tau_2 \tau_3
$$

\noindent Then, we set 

$$
\hw_{\varpi_3}:=  \tau_3 \tau_2 \tau_3 \tau_1 \tau_2 \tau_3 \quad \quad
U_{\varpi_3}:=  \{ v_i \, : \, 0 \leq i \leq 3\}.
$$
Clearly, 
$$\lambda_{\varpi_3}= 0.$$
Obviously, the set $U_{\varpi_{3}}$ stabilises $y \lambda_{\varpi_{3}}^*=0.$ Moreover, note that 
$$
     \hw_{\varpi_{3}}=  y s_3s_2s_3s_1s_2s_3 y \inv 
     =  s_1 s_3 = s_{\ha_1}.
$$
The last equality is a consequence of Lemma \ref{lem: reflections sph rk min}. In particular, the element $\hw_{\varpi_{3}}$ belongs to $\hW.$ Hence, claim 1,2 and 3 imply Theorem \ref{thm:fundamental identities sph mnors}. Claim 2 and 3 follow by some elementary calculations.  We focus on Claim 1.

\begin{proof}[Proof of Claim 1.]
    Table \ref{tab:B3 G2}, which is obtained by direct computation, should be read as the analogue Table \ref{tab:sl sp}.

    \begin{table}[H]
        \centering
        \begin{tabular}{l|c|c|c|c|c|c|c|c}
             &  $e$ & $ \hw_{\varpi_{3}} $ & $v_1$ & $v_1 \hw_{\varpi_{3}}$ & $v_2$ & $v_2 \hw_{\varpi_{3}}$ & $v_3$ & $v_3 \hw_{\varpi_{3}}$\\
             \hline 
          $\delta_1$   & $\color{Green}{+}$ & \color{Green}{+} & \color{Green}{+} & $\color{Green}{+}$ & $\color{Green}{+}$ &  $\color{red}{-1}$   & $\color{Green}{+}$ & $\color{red}{-1}$ \\
             \hline 
            $\delta_{2}$ & $\color{Green}{+}$ & $\color{Green}{+}$ & $\color{Green}{+}$ & $\color{red}{-1}$ &  $\color{red}{-1}$ & $\color{Green}{+}$ &  $\color{Green}{+}$ &  $\color{Green}{+}$ \\
             \hline 
             $\delta_{3}$ & $\color{Green}{+}$&  $\color{red}{-1}$ & $\color{red}{-1}$ & $\color{Green}{+}$ &  $\color{Green}{+}$  & $\color{red}{-1}$ & $\color{red}{-1}$ &  $\color{Green}{+}$
        \end{tabular}
        \caption{Infinitesimal action \texorpdfstring{$(B_{3}, G_{2})$}{B_3,G_2}}
        \label{tab:B3 G2}
    \end{table}
As in the previous cases, Table \ref{tab:B3 G2} easily allows to conclude using Corollary \ref{cor: Z delta sph minor.} and the Leibniz rule.
\end{proof}

\bigskip

\textbf{Aknowledgements.} I would like to thank Jacopo Gandini for some useful discussion on spherical subgroups. This work was performed within the framework of the LABEX MILYON (ANR-10-
LABX-0070) of
Université de Lyon,
within the program "Investissements
d'Avenir" (ANR-11-IDEX-0007) operated by the French National Research Agency
(ANR).

\bibliographystyle{alpha}

\bibliography{biblio}

\end{document}